\theoremstyle{plain}
\newtheorem{theorem}{Theorem}[section]
\newtheorem{lemma}[theorem]{Lemma}
\newtheorem{proposition}[theorem]{Proposition}
\newtheorem*{definition}{Definition}
\newtheorem*{remark}{Remark}
\newcommand{\nc}{\newcommand}
\nc\bB{\mathbb{B}}
\nc\bC{\mathbb{C}}
\nc\bD{\mathbb{D}}
\nc\bE{\mathbb{E}}
\nc\bF{\mathbb{F}}
\nc\bG{\mathbb{G}}
\nc\bH{\mathbb{H}}
\nc\bI{\mathbb{I}}
\nc{\bJ}{\mathbb{J}}
\nc\bK{\mathbb{K}}
\nc\bL{\mathbb{L}}
\nc\bM{\mathbb{M}}
\nc\bN{\mathbb{N}}
\nc\bO{\mathbb{O}}
\nc\bP{\mathbb{P}}
\nc\PP{\mathbb{P}}
\nc\bQ{\mathbb{Q}}
\nc\bR{\mathbb{R}}
\nc\bS{\mathbb{S}}
\nc\bT{\mathbb{T}}
\nc\bU{\mathbb{U}}
\nc\bV{\mathbb{V}}
\nc\bW{\mathbb{W}}
\nc\bY{\mathbb{Y}}
\nc\bX{\mathbb{X}}
\nc\bZ{\mathbb{Z}}
\nc\cA{\mathcal{A}}
\nc\cB{\mathcal{B}}
\nc\cC{\mathcal{C}}
\nc\cD{\mathcal{D}}
\nc\cE{\mathcal{E}}
\nc\cF{\mathcal{F}}
\nc\cG{\mathcal{G}}
\nc\cH{\mathcal{H}}
\nc\cI{\mathcal{I}}
\nc{\cJ}{\mathcal{J}}
\nc\cK{\mathcal{K}}
\nc\cM{\mathcal{M}}
\nc\cN{\mathcal{N}}
\nc\cO{\mathcal{O}}
\nc\cP{\mathcal{P}}
\nc\cQ{\mathcal{Q}}
\nc\cS{\mathcal{S}}
\nc\cT{\mathcal{T}}
\nc\cU{\mathcal{U}}
\nc\cV{\mathcal{V}}
\nc\cW{\mathcal{W}}
\nc\cY{\mathcal{Y}}
\nc\cX{\mathcal{X}}
\nc\cZ{\mathcal{Z}}
\nc\SL{\operatorname{SL}}
\nc\Ann{\operatorname{Ann}}
\nc\hol{\operatorname{hol}}
\nc\hF[1][(X,\omega)]{F^{1,0}#1}
\title{A Zero Lyapunov Exponent in Genus $3$ Implies the Eierlegende Wollmilchsau}
\author{David Aulicino\thanks{D.A. was partially supported by NSF DMS - 1738381, a grant from the Simons Foundation ($\#$853471), and several PSC-CUNY grants.}, Frederik Benirschke, and Chaya Norton\thanks{C.N. received travel support from the AMS-Simons Travel Grants, which are administered by the American Mathematical Society with support from the Simons Foundation.}}
\date{}
\begin{document}

\newcommand{\genlin}{\text{GL}_2(\mathbb{R})}
\newcommand{\splin}{\text{SL}_2(\mathbb{R})}
\newcommand{\spolin}{\text{SO}_2(\mathbb{R})}

\newcommand{\CylP}{cylinder pinch\xspace}

\newcommand{\JPD}{jump problem distance\xspace}

\maketitle

\begin{abstract}
We prove that the closed orbit of the Eierlegende Wollmilchsau is the only $\splin$-orbit closure in genus three with a zero Lyapunov exponent in its Kontsevich-Zorich spectrum.  The result recovers previous partial results in this direction by Bainbridge-Habegger-M\"oller and the first named author. \textcolor{black}{The main new contribution is the identification of the differentials in the Hodge bundle corresponding to the Forni subspace in terms of the degenerations of the surface.  We use this description of the differentials in the Forni subspace to evaluate them on absolute homology curves and apply the jump problem from the work of Hu and the third named author to the differentials near the boundary of the orbit closure.}  This results in a simple geometric criterion that excludes the existence of a Forni subspace.
\end{abstract}

\tableofcontents

\section{Introduction}

The Lyapunov exponents of the Kontsevich-Zorich cocycle provide detailed information about the straight-line flow on a translation surface, including those arising from rational billiards \cite{ZorichFlatSurfs}.  They have also played an important role in understanding the dynamics in the moduli spaces of these surfaces \cite{EskinMirzakhaniInvariantMeas}.  While generic surfaces, i.e., those with dense orbit in strata of Abelian differentials, have a simple Kontsevich-Zorich spectrum \cite{ForniDev, AvilaVianaSimp}, \cite{ForniHand} discovered a genus three translation surface, now known as the Eierlegende Wollmilchsau \cite{HerrlichSchmithusenEier} (Figure~\ref{Wollmilchsau}), with maximally many Lyapunov exponents equal to zero in its spectrum.  Surfaces with maximally many Lyapunov exponents equal to zero proved to be extremely exceptional \cite{MollerShimuraTeich, AulicinoCompDegKZ, AulicinoCompDegKZAIS, AulicinoNortonST5}; only two closed orbits in any genus have maximally many Lyapunov exponents equal to zero.  On the other hand, in genus four, for example, there are infinitely many orbits with one or more Lyapunov exponents equal to zero, or \emph{zero Lyapunov exponents} for short.

In genus two, the second Lyapunov exponent is either $1/2$ or $1/3$ depending only on the stratum in which the translation surface lies \cite[Thm.~15.1]{BainbridgeThesis}.  However, in genus three, individual Lyapunov exponents can vary depending on the orbit closure.  If a genus three translation surface has a Lyapunov exponent equal to zero, then by combining the results of \cite{AulicinoZeroExpGen3} and \cite[Prop.~4.5]{BainbridgeHabeggerMollerHNFilt}, it must generate a Teichm\"uller curve in the principal stratum, $\cH(1,1,1,1)$.  However, other than a finiteness statement using equidistribution of orbit closures following \cite{EskinMirzakhaniMohammadiOrbitClosures} combined with the result of the first named author \cite{AulicinoZeroExpGen3}, little could be said about Teichm\"uller curves with a zero Lyapunov exponent in the principal stratum.  We prove

\begin{theorem}
\label{Gen3Class}
Let $\cM$ be an orbit closure in genus three with at least one zero Lyapunov exponent in its Kontsevich-Zorich spectrum.  Then $\cM$ is the Teichm\"uller curve generated by the Eierlegende Wollmilchsau.
\end{theorem}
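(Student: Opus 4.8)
The plan is to first reduce the problem to the study of the \emph{Forni subspace} along a single Teichm\"uller curve, and then to obstruct its existence by determining what that subspace must look like at the cusps of the curve.

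\textbf{Step 1 (Reduction).} By \cite{AulicinoZeroExpGen3} combined with \cite[Prop.~4.5]{BainbridgeHabeggerMollerHNFilt}, an orbit closure $\cM$ in genus three with a zero Lyapunov exponent is a Teichm\"uller curve $C$ in the principal stratum $\cH(1,1,1,1)$. A zero exponent is equivalent to the presence of a nontrivial Forni subspace $\mathbf{F}\subset H^1$, i.e.\ an $\splin$-invariant, flat subbundle of the Hodge bundle on which the Kontsevich--Zorich cocycle acts by isometries for the Hodge norm; equivalently the second fundamental form of the Hodge bundle vanishes on the $(1,0)$-part $\mathbf{F}^{1,0}$. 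In genus three $\mathbf{F}$ has real rank $2$ or $4$; in the rank $4$ case $\cM$ is maximally degenerate and one concludes $\cM=C_{\mathrm{EW}}$, the Teichm\"uller curve generated by the Eierlegende Wollmilchsau, from the classification of completely degenerate spectra \cite{MollerShimuraTeich, AulicinoCompDegKZ}. So assume $\mathbf{F}$ has rank $2$, whence $\mathbf{F}^{1,0}$ is a line subbundle, spanned over each $(X,\omega)\in C$ by a single holomorphic one-form $\eta=\eta_{(X,\omega)}$, which we normalize to a continuous section over $C$.

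\textbf{Step 2 (The Forni differential at a cusp).} Approach a cusp of $\overline{C}$ and let $X_0$ be the corresponding stable curve, obtained from $X$ by pinching the core curves in a periodic direction; the monodromy about the cusp is parabolic (unipotent). Since the cocycle is isometric on $\mathbf{F}$, this unipotent monodromy must act on $\mathbf{F}$ trivially, so $\mathbf{F}$ extends across the cusp and contributes only to the pure, weight-one part of the limit mixed Hodge structure. Concretely this forces $\eta$ to have vanishing periods over the vanishing cycles to leading order, so that the limit of $\eta$ descends to a holomorphic one-form on the normalization $\widetilde{X}_0$ with at worst simple poles of opposite residue at the preimages of the nodes. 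Running over the finitely many topological types of periodic degeneration available to a Teichm\"uller curve in $\cH(1,1,1,1)$, this identifies the boundary value of $\eta$ explicitly as a prescribed combination of (meromorphic) differentials on the lower-complexity components of $\widetilde{X}_0$; this is the ``identification of the Forni differentials in terms of degenerations'' advertised in the abstract.

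\textbf{Step 3 (Jump problem and the geometric criterion).} With the boundary value of $\eta$ in hand, reconstruct the Forni differential $\eta_t$ on a nearby smooth surface using the jump-problem method of Hu and Norton: $\eta_t$ is the model differential on $X_0$ corrected by a term whose leading behavior in the plumbing parameters is governed by the residues at the nodes and the conformal geometry of the pinched annuli, with error controlled by the jump problem distance. Evaluating $\eta_t$ against a basis of \emph{absolute} homology classes of $X$, and comparing with the corresponding periods of $\omega$ (which are read off directly from the cylinder decomposition in the pinched direction), produces linear relations among the entries of the period matrix of $(X,\omega)$ that must hold in every cusp, i.e.\ in every parabolic direction of $C$. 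These relations translate into a rigid combinatorial condition on the heights, widths, and homology classes of the cylinders of $(X,\omega)$ in each periodic direction---a simple geometric criterion whose only solution, among cylinder decompositions that can occur on a Teichm\"uller curve in $\cH(1,1,1,1)$, is the square-tiled surface underlying the Eierlegende Wollmilchsau, giving $\cM=C_{\mathrm{EW}}$.

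\textbf{Main obstacle.} The heart of the argument is Steps 2--3: proving that $\eta$ genuinely extends holomorphically over the boundary (so that the Forni subspace cannot interact nontrivially with the vanishing cohomology), and then extracting from the jump-problem asymptotics a period relation that is intrinsic---independent of the choice of plumbing coordinates---and sharp enough to be violated by every configuration except one. Establishing coordinate-free leading-order period asymptotics for $\eta_t$ on absolute cycles, and checking that the resulting criterion is non-vacuous at every cusp of $\overline{C}$, is where the real work lies.
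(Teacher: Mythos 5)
Your outline tracks the paper's strategy at the top level --- reduce to a Teichm\"uller curve in $\cH(1^4)$, identify what the Forni differential becomes at a cusp, and use the Hu--Norton jump problem to obstruct --- but Step 3 is too vague to carry the argument, and this is precisely where the paper's content lives. A small but consequential precision in Step 2: the Forni differential does \emph{not} degenerate to a form with ``at worst simple poles of opposite residue at the nodes''; vanishing of its periods over the vanishing cycles (Lemma~\ref{AulForniLem}) forces its residues at the nodes to be zero, so it descends to a genuinely \emph{holomorphic} one-form supported on a single irreducible component of the normalization. This regularity is exactly what makes Proposition~\ref{ForniCanonicalBasisProp} work, and it is essential for the later period computations.

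The deeper gap is in Step 3. The paper does not derive a single ``rigid combinatorial condition whose only solution is the Wollmilchsau.'' The argument is a case-by-case exclusion organized by the six possible dual graphs of the cylinder pinch (Table~\ref{MainConfigTable}, from \cite[Lem.~4.1]{AulicinoZeroExpGen3}), and the mechanism differs by case. For Cases~1, 2, and 4, one finds by cut-and-paste a transverse cylinder whose core curve $\beta$ limits to a path between two \emph{distinct} nodes on the unique elliptic component; since a holomorphic differential on an elliptic curve has no zeros, $\int_\beta\Theta_1\neq 0$, contradicting Lemma~\ref{AulForniLem} via the convergence in Proposition~\ref{prop:Asymp} --- this is Proposition~\ref{NewForniCriterion}, and it is an \emph{exclusion} criterion, not a selection criterion. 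Case~3 is excluded by the quantitative jump-problem expansion (Lemma~\ref{lemma:lowestOrder}): one shows that a specific period-matrix entry has a non-constant lowest-order term. Case~5 reduces to Cases~1--4 by exhibiting a transverse simple cylinder and invoking complete periodicity of Veech surfaces. Only in Case~6 does a positive identification occur: the jump problem forces the two cylinders to have equal moduli (hence equal circumferences and heights, using homologous core curves), and then a flat-geometric ``window'' argument from \cite{AulicinoNortonST5} pins down the Wollmilchsau as the unique surface. Without this six-case structure --- and without recognizing that the obstruction changes form from case to case, sometimes flat-geometric and sometimes analytic --- the step from ``period relations at cusps'' to ``$\cM=C_{\mathrm{EW}}$'' is not closed.
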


\begin{figure}
  \centering
  \begin{tikzpicture}[scale=.6]
    \draw (0,0) -- (8,0) -- (8,2) -- (0,2) -- (0,0);
    \draw (0,4) -- (8,4) -- (8,6) -- (0,6) -- (0,4);
    \foreach \x in {(0,0), (4,0), (0,6), (4,6), (8,0), (8,6)} {\node[circle, fill=black] at \x {};}
    \foreach \x in {(0,2),(0,4), (4,2), (4,4), (8,2), (8,4)} {\node[rectangle, fill=black] at \x {};}
    \foreach \x in {(2,4), (2,2), (6,2),(6,4)} {\node[diamond, fill=black] at \x {};}
    \foreach \x in {(2,0),(6,0), (2,6), (6,6)} {\node[star, fill=black] at \x {};}

    \node[circle, label=below:$0$] at (1,0) {};
    \node[circle, label=below:$1$] at (3,0) {};
    \node[circle, label=below:$2$] at (5,0) {};
    \node[circle, label=below:$3$] at (7,0) {};

    \node[circle, label=below:$4$] at (1,4) {};
    \node[circle, label=below:$7$] at (3,4) {};
    \node[circle, label=below:$6$] at (5,4) {};
    \node[circle, label=below:$5$] at (7,4) {};

    \node[circle, label=above:$2$] at (1,6) {};
    \node[circle, label=above:$1$] at (3,6) {};
    \node[circle, label=above:$0$] at (5,6) {};
    \node[circle, label=above:$3$] at (7,6) {};

    \node[circle, label=above:$4$] at (1,2) {};
    \node[circle, label=above:$5$] at (3,2) {};
    \node[circle, label=above:$6$] at (5,2) {};
    \node[circle, label=above:$7$] at (7,2) {};

    \draw [dashed] (2,0) -- (2,2);
    \draw [dashed] (2,4) -- (2,6);
    \draw [dashed] (4,0) -- (4,2);
    \draw [dashed] (4,4) -- (4,6);
    \draw [dashed] (6,0) -- (6,2);
    \draw [dashed] (6,4) -- (6,6);

     \end{tikzpicture}
\caption{The square-tiled surface known as the Eierlegende Wollmilchsau}
\label{Wollmilchsau}
\end{figure}
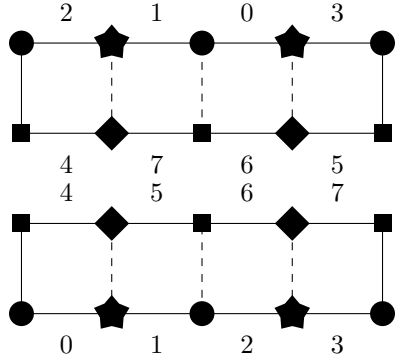

This theorem resolves a question posed by the first named author in \cite{AulicinoZeroExpGen3}.  It was proven that there are no Teichm\"uller curves with a zero Lyapunov exponent outside of the principal stratum in genus three by \cite[Prop.~4.5]{BainbridgeHabeggerMollerHNFilt}.  Using this result, \cite{AulicinoZeroExpGen3} proved that every orbit closure in genus three with a zero Lyapunov exponent must be a Teichm\"uller curve in the principal stratum, and that there are at most finitely many.  For the case of two zero Lyapunov exponents in genus three, the maximum possible, the classification of all possible orbit closures was carried out in the work of \cite{MollerShimuraTeich, AulicinoCompDegKZ, AulicinoCompDegKZAIS}.

In light of these results, it suffices to prove Theorem~\ref{Gen3Class} by focusing on Teichm\"uller curves in the principal stratum.  However, the techniques developed here are sufficiently powerful that they apply to all orbit closures and all strata in genus three.  When it is convenient to do so, we assume for some technical results that a translation surface lies on a Teichm\"uller curve or in the principal stratum.  In Appendix~\ref{GeneralizationAppendix}, we explain how to generalize the classification to arbitrary orbit closures in any stratum in genus three. 

We begin by recalling \cite[Prop.~1.1]{AulicinoZeroExpGen3}, which uses the results of \cite{FilipZeroExps} to prove that zero exponents in genus three must arise from a Forni subspace.  There are two key ingredients that facilitate the results of this paper.  First, in Section~\ref{ForniSubspaceSection}, we consider differentials in the Hodge bundle whose real parts lie in the Forni subspace.  \textcolor{black}{In \cite{AvilaEskinMollerForniBundle} the Forni subspace was defined, and its existence was established.  Given the well-known relations between real and complex absolute homology and cohomology as well as the Hodge bundle, \cite{AvilaEskinMollerForniBundle} pass freely between these spaces.  However, we disambiguate these spaces here to highlight the nuances of how each space interacts with other complementary subspaces, which embody the contributions of \cite{AulicinoZeroExpGen3, AulicinoNortonST5} and the present work.  The key insight of \cite{AulicinoZeroExpGen3} was to consider differentials in the Forni subspace and evaluate them on curves in absolute homology.  The result was stated in genus three and we include an easy generalization in Proposition~\ref{ForniSubspCrit} for future reference.  In \cite{AulicinoNortonST5}, the jump problem was applied to study the differentials in the Hodge bundle of Shimura-Teichm\"uller curves that were not in the span of the flat differential determining the translation surface.  In this case, since the derivative of the period matrix along the Teichm\"uller flow on the Teichm\"uller curve is known to be a symmetric rank one matrix, finding any non-zero term outside of the known non-zero term would produce a contradiction.  This was the key ingredient in the proof of the classification in \cite{AulicinoNortonST5}.  In the present work, we begin by considering the differentials in the Forni subspace.  By moving close to the boundary of moduli space where the surface degenerates, we are able to explicitly identify the differentials in the Forni subspace.  We then deduce how these differentials in this space interact with curves on our given translation surface, and a general statement is given in Proposition~\ref{ForniCanonicalBasisProp}.  Finally, in Proposition~\ref{NewForniCriterion}, we deduce a property that is used to exclude most cases in this work.}


The second key ingredient is the use of the solution to the jump problem \cite{HuNortonGenVarFormsAbDiffs} that played a key role in \cite{AulicinoNortonST5}.  From \cite{AulicinoZeroExpGen3}, there are six cases to consider.  Case 3 can be excluded entirely by solving the jump problem for the nodal surface in that case.  In Case 6, the jump problem is used to prove that the two cylinders are homologous and then flat techniques from \cite[$\S$5]{AulicinoNortonST5} prove that the only possible translation surface satisfying the necessary conditions is the Eierlegende Wollmilchsau.  We remark that unlike \cite[$\S$5]{AulicinoNortonST5}, no computer assistance is necessary for the proof presented here.

After setting terminology and notation in Section~\ref{PrelimSect} and recalling the solution to the jump problem from \cite{HuNortonGenVarFormsAbDiffs} in \Cref{sec:Asymp}, we introduce the holomorphic Forni subspace in Section~\ref{ForniSubspaceSection}.  We proceed in Section~\ref{ForniSubspaceSection} to recall the main technical lemma of \cite{AulicinoZeroExpGen3} and to generalize its consequences.  We then develop general results about the holomorphic Forni subspace and connect it to flat geometry and the degenerations of Riemann surfaces.  In Section~\ref{MainThmSect}, we state and prove the main theorem using the technical results in the following sections.  We also recall the six cases from \cite{AulicinoZeroExpGen3} that describe all possible cusps of a Teichm\"uller curve with a zero Lyapunov exponent.  Sections~\ref{Case124Sect} through \ref{Case6Sect} are dedicated to addressing each case and either excluding it or in Case 6, proving that the surface is in the $\splin$-orbit of the Eierlegende Wollmilchsau.  Finally, in Appendix~\ref{GeneralizationAppendix}, we explain how to generalize the result from 
Teichm\"uller curves in the principal stratum to general orbit closures in genus three.

\subsubsection*{Acknowledgments} The authors thank the Mathematical Sciences Research Institute for its hospitality during the Fall 2019 semester on Holomorphic Differentials in Mathematics and Physics.  The authors thank Matt Bainbridge, Samuel Grushevsky, and Alex Wright for helpful conversations.  \textcolor{black}{We also thank the anonymous referee for thoughtful feedback that improved the paper.}

\section{Preliminaries}
\label{PrelimSect}

\textcolor{black}{The purpose of this section is to set notation for the work.  For a more detailed introduction, we reference the reader to \cite[$\S$2]{AulicinoNortonST5}.  For background on flat surfaces and Lyapunov exponents, we refer the reader to \cite{ZorichFlatSurfs, ForniMatheusSurvey}.  For background on the jump problem and plumbing differentials, see \cite{FayThetaFcns, Yamada, HuNortonGenVarFormsAbDiffs}.}

\subsection{Flat Geometry}

A \emph{translation surface} $(X, \omega)$ is a pair consisting of a Riemann surface $X$ carrying an Abelian differential $\omega$.  If $X$ has genus $g \geq 2$, then there will be cone points with angles that are a multiples of $2\pi$ corresponding to the zeros of $\omega$.  Since the holonomy lies in $2\pi \bZ$, given a tangent vector at a point, there is a straight-line trajectory emanating from that point.  If the trajectory is closed and does not pass through a cone point, i.e., it is \emph{regular}, then there is a set of parallel trajectories homotopic to it that determine a \emph{cylinder}.  The boundaries of the cylinder necessarily consist of closed trajectories beginning and terminating at cone points.  The \emph{height} of a cylinder will always refer to the distance between its boundaries. A \emph{saddle connection} is a straight-line trajectory that begins and ends at not necessarily distinct cone points.

A direction on a translation surface is \emph{periodic} if every trajectory in that direction is closed.  This implies that there is a decomposition of the surface into cylinders in that direction.  The data consisting of the cylinders with their saddle connections and identifications between the saddle connections, but forgetting the metric data of the cylinders and saddle connections, is called a \emph{cylinder diagram}.  

In \cite{AulicinoZeroExpGen3}, a depiction of a translation surface was introduced that was well-suited to the arguments in that work.  Indeed, the convention will be useful here as well.  Typically, cylinders on a translation surfaces are drawn as parallelograms with singularities at their vertices.  We choose instead to depict the cylinders as rectangles that do not necessarily have singularities at their corners.  In order to emphasize that a single saddle connection $\sigma$ is broken by the rectangle, the left-hand portion of the saddle connection, which occurs on the right-side of the rectangle will be written $\sigma$, and the right-hand portion of the saddle connection, which occurs on the left-hand side of the rectangle will be denoted by $\sigma'$.  See Figure~\ref{SCFigConvention}.

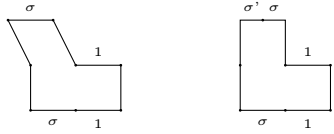
\begin{figure}[ht]
\centering
\begin{minipage}[b]{0.24\linewidth}
\begin{tikzpicture}[scale=0.30]
\draw (0,0)--(0,2)--(-1,4)--(1,4)--(2,2)--(4,2)--(4,0)--cycle;
\foreach \x in {(0,0),(0,2),(-1,4),(1,4),(2,2),(4,2),(4,0),(2,0)} \draw \x circle (1pt);
\draw(0,4) node[above] {\tiny $\sigma$};
\draw(3,2) node[above] {\tiny 1};
\draw(1,0) node[below] {\tiny $\sigma$};
\draw(3,0) node[below] {\tiny 1};
\end{tikzpicture}
\end{minipage}
\centering
\begin{minipage}[b]{0.24\linewidth}
\begin{tikzpicture}[scale=0.30]
\draw (0,0)--(0,4)--(2,4)--(2,2)--(4,2)--(4,0)--cycle;
\foreach \x in {(0,0),(0,2),(1,4),(2,2),(4,2),(4,0),(2,0)} \draw \x circle (1pt);
\draw(0.5,4) node[above] {\tiny $\sigma$'};
\draw(1.5,4) node[above] {\tiny $\sigma$};
\draw(3,2) node[above] {\tiny 1};
\draw(1,0) node[below] {\tiny $\sigma$};
\draw(3,0) node[below] {\tiny 1};
\end{tikzpicture}
\end{minipage}
 \caption{Two translation surfaces representing the same point in moduli space}
 \label{SCFigConvention}
\end{figure}

\subsection{Strata}

For each genus, the bundle of Abelian differentials over the moduli space of genus $g$ Riemann surfaces can be stratified by the orders of the zeros of the differentials in the space.  The total order of the zeros counted with multiplicity is $2g-2$.  Let $\kappa$ be a partition of $2g-2$.  Then $\cH(\kappa)$ denotes the moduli space of Riemann surfaces carrying Abelian differentials with zeros of order specified by $\kappa$.  We will use the shorthand $\cH(1^4)$ to mean $\cH(1,1,1,1)$ throughout.

These strata are not necessarily connected, but their connected components have been classified \cite{KontsevichZorichConnComps}.  We note that the stratum $\cH(1^4)$ is connected.

\subsection{Dynamics}

Strata admit a natural action by $\genlin$.  A particularly important subgroup of $\genlin$ is given by the diagonal matrices, which form a $1$-parameter family known as the \emph{Teichm\"uller geodesic flow}, or \emph{Teichm\"uller flow}.  In this manuscript, we will be concerned with a geodesic diverging to the boundary.  Define a family of matrices
$$g_t = \left(\begin{array}{cc} 1 & 0 \\ 0 & e^{t} \end{array} \right).$$
Given a horizontally periodic translation surface $(X, \omega)$, the \emph{Teichm\"uller geodesic} determined by $(X, \omega)$ is the family given by $g_t \cdot (X, \omega)$ for all $t \geq 0$.

For any translation surface we can consider the group of derivatives of affine diffeomorphisms of the surface, which naturally lie in $\genlin$.  If this group forms a lattice subgroup, then the translation surface is called a \emph{lattice surface} or \emph{Veech surface}.  Smillie proved that the $\genlin$-orbit of a translation surface is closed if and only if the translation surface is a Veech surface.  Such a closed orbit is known as a \emph{Teichm\"uller curve}.  A translation surface is called \emph{completely periodic} if the existence of a closed regular trajectory implies that every parallel trajectory is closed.  An important theorem of Veech is that Veech surfaces are completely periodic \cite{VeechTeichCurvEisen}.

\textcolor{black}{Every $\genlin$-orbit closure, after restricting to the locus of unit area translation surfaces, admits a finite $\splin$-invariant measure by \cite{EskinMirzakhaniInvariantMeas} and the Teichm\"uller geodesic flow is ergodic with respect to that measure \cite{MasFinMeasErg, VeechFinMeasErg}.}  By \cite{EskinMirzakhaniMohammadiOrbitClosures, Filip2}, every $\genlin$-orbit closure is a quasi-projective subvariety of moduli space called an \emph{invariant subvariety}.  Consider the real absolute cohomology bundle $H^1$ over the moduli space with the Gauss-Manin connection.  The \emph{Kontsevich-Zorich cocycle} (\emph{KZ-cocycle} for short) is a symplectic (orbifold) cocycle on this space that is induced by the action of the Teichm\"uller flow.  By the Oseledets multiplicative ergodic theorem, there is a well-defined set of Lyapunov exponents associated to almost every element in the moduli space.  These can be computed by considering the monodromy matrices $A_t$ given by taking longer and longer return times of the Teichm\"uller flow to a small neighborhood of $(X, \omega)$ and computing the eigenvalues of $A_t A_t^{\intercal}$, computing their logarithms, normalizing by $t$ and letting $t$ tend to infinity.  Normalizing the top Lyapunov exponent yields a symmetric set of Lyapunov exponents known as the \emph{Kontsevich-Zorich spectrum}
$$1 = \lambda_1 > \lambda_2 \geq \cdots \geq \lambda_g \geq - \lambda_g \geq \cdots \geq -\lambda_2 > -\lambda_1 = -1.$$
Due to the symmetry, we always restrict to the top $g$ Lyapunov exponents.  For a more detailed explanation of this setup and connections to flat geometry, see \cite{ZorichFlatSurfs}.

It is also possible to consider the Zariski closure of the monodromy of the KZ-cocycle.  By \cite{FilipZeroExps}, the resulting group completely determines the exact number of zero Lyapunov exponents in the KZ-spectrum.  By \cite[Prop.~1.1]{AulicinoZeroExpGen3}, the only mechanism for producing zero Lyapunov exponents in genus three is known as a Forni subspace \cite{AvilaEskinMollerForniBundle}.  The \emph{Forni subspace} is the maximal subspace of absolute (real) cohomology on which the monodromy of the KZ-cocycle restricted to this subspace is contained in a compact group.

\subsection{Degenerate Surfaces}

\textcolor{black}{Given a horizontally periodic translation surface, $g_t$ as defined above can be applied to it to increase the modulus of every cylinder.}  In this way, the modulus tends to infinity with $t$ and the core curve of every cylinder is pinched in the limit.  This results in a nodal surface in the Deligne-Mumford compactification such that the stable differential over the nodal surface has a pair of simple poles with opposite residues at each node.

There is a natural graph known as the \emph{dual graph}, or \emph{stable graph} in the literature, associated to such a stable curve.  After removing the nodes, each connected component is denoted by a vertex, and each edge corresponds to a node.  An edge can be incident with a single vertex.  Finally, each vertex is labeled with the genus of the connected component of the surface that it represents.

\section{Asymptotics of Period Matrices}\label{sec:Asymp}

\subsection{Cylinder Pinching}
\label{sec:degeneration}

Let $(X, \omega)$ be a horizontally periodic translation surface.  We call $(X', \omega')$ the \emph{cylinder pinch of $(X, \omega)$ along the family} \[\{g_t \cdot (X, \omega) = (X_t, \omega_t) \,|\, t \geq 0\}\] if $(X', \omega')$ is the limit nodal surface in the Deligne-Mumford compactification given by letting $t$ go to infinity.  We drop the family from the definition when it is not needed for the discussion at hand. In particular, the nodal Riemann surface $(X',\omega')$ is obtained by pinching the core curves of all horizontal cylinders. Thus, given a simple closed curve in $X'$, not crossing through any nodes, we can consider it as a path in $(X_t, \omega_t) = g_t \cdot (X, \omega)$.

For the rest of the section  $g'$ denotes the geometric genus of $X'$, i.e., the sum of the genera of all irreducible components. It will be convenient to use a homology basis that is adapted to the \CylP.

\begin{definition}
Let $X'$ be a cylinder pinch of $(X, \omega)$ along a family $(X_t, \omega_t)$ of geometric genus $g'$.
Let $X_v$ be the irreducible component of $X'$ associated to the vertex $v$ in the dual graph of $X'$.
We say that a homology class $[\alpha]\in H_1(X_t;\bZ)$ is supported on an irreducible component $X_v$ of $X'$ if it can be represented by a sum of simple closed curves, which are all contained in $X_v$.

We say that a symplectic homology basis 
$$\cB = \{\alpha_1, \beta_1, \ldots, \alpha_{g'}, \beta_{g'}, \ldots, \alpha_g, \beta_g\}$$
of $X_t$ is {\em adapted to a \CylP} $X'$ if the following conditions are satisfied.
\begin{enumerate}
\item \textcolor{black}{The set $\{\alpha_{g'+1},\ldots,\alpha_g\}$ is a collection of cycles on $X_t$ represented by core curves of horizontal cylinders on $X$.\footnote{If there are linear relations among the core curves in homology, then a linearly independent subset of them would be taken.}}
\item For $1\leq i\leq g'$, the classes  $\alpha_i,\beta_i$ are supported on some irreducible component of $X'$ of positive genus.
\item Furthermore, the collection of cycles $\alpha_i,\beta_i, i\leq g'$, which are supported on the irreducible component $X_v$, form a symplectic homology basis for $X_v$.
\end{enumerate}
\end{definition}

\textcolor{black}{\begin{remark}
We observe that such a basis can always be constructed by taking a symplectic basis on each component $X_v$, which will satisfy Conditions 2 and 3.  Then a collection of core curves of cylinders can be chosen to satisfy Condition 1, which will be linearly independent of the curves on each $X_v$, and they will not intersect the curves on $X_v$ because the core curves of the cylinders are homotopic to the nodes of $X'$ when $t$ tends to infinity.  Finally, we can take any completion of this basis to a symplectic basis by choosing any collection of $\beta_i$ that work.
\end{remark}}

\textcolor{black}{In general, one can only choose a homology basis locally in a neighborhood of a translation surface in moduli space.}  Here we can choose it along the the whole Teichm\"uller geodesic. In fact, it is constant in a trivialization of the bundle of relative homology.

Given the same setup as above, we let $A$ be the Lagrangian subspace spanned by $\{\alpha_1,\ldots,\alpha_g\}$ on $X$ and $B$ the complementary subspace spanned by $\{\beta_1,\ldots,\beta_g\}$. We refer to elements of $A$ and $B$ as $A$-cycles and $B$-cycles, respectively.

\subsection{Asymptotics of A-Normalized Differentials}
\label{AsymptANormDiffsSect}

\textcolor{black}{Recall that given a symplectic basis $\{\alpha_i, \beta_j\}$ of absolute homology on a Riemann surface, a basis of Abelian differentials $\{\Theta_1,\ldots,\Theta_g\}$ is  \emph{$A$-normalized} if it satisfies $\int_{\alpha_j}\Theta_i(t)=\delta_{ij}$, where $\delta_{ij}$ is the Kronecker delta.  Given a cylinder pinch $(X', \omega')$ of $(X, \omega)$ along a family $(X_t,\omega_t)$, choose a symplectic basis $\mathcal{B}$ adapted to the degeneration.  Let $\{\Theta_1(t),\ldots,\Theta_g(t)\}$ be an $A$-normalized basis on $X_t$.} 

\textcolor{black}{Now we define an $A$-normalized basis on $X'$ as follows.  This definition will depend on the choice of adapated homology basis.  For each irreducible component $X_v$ of positive genus of $X'$, choose a basis of holomorphic differentials on $X_v$ normalized against the restriction of $\{\alpha_1, \ldots, \alpha_{g'}\}$ to $X_v$.
For each vanishing cycle $\alpha_i$, for $i>g'$, we choose the unique differential having residues $\pm 1$ at the nodes crossed by $\beta_i$ with a positive residue at the preimage of the node that is reached first by $\beta_i$ relative to the orientation of $\beta_i$. 
We call the resulting basis $\{\Theta_1,\ldots,\Theta_g\}$ an $A$-normalized basis for $X'$.}

Suppose a $B$-cycle is represented by a simple closed loop $\beta_i$. Then $\Theta_i$ is supported exactly on the irreducible components of $X'$ where $\beta_i$ is supported.
Note that $\Theta_i$ has poles exactly at the nodes crossed by the $B$-cycle $\beta_i$. In particular,  $\Theta_i$ is holomorphic if and only if $i\leq g'$.

Our goal in this section is to analyze the $B$-periods $\int_{\beta_j}\Theta_i(t)$. We need to introduce some notation before we can state the results.
Recall that the nodes of the \CylP $X'$ are in correspondence with the horizontal cylinders of $X$. Denote by $C_e$ the cylinder corresponding to the node $e$, and let $\alpha_e$ be the core curve of $C_e$, which is a vanishing cycle.
We now make the assumption that the horizontal cylinders  of $(X,\omega)$ have pairwise commensurable moduli, i.e., for every edge $e$ there exists positive natural numbers $r_e\in\mathbb{N}$ such that
\begin{equation}\label{eq:moduli}
\dfrac{m(C_e)}{m(C_{e'})}=\dfrac{r_e}{r_{e'}}\text{ for } e,e'\in E(\Gamma), \quad \gcd_{e \in E(\Gamma)}(r_e)=1,
\end{equation}
where $E(\Gamma)$ is the edge set of $\Gamma$.  \textcolor{black}{We recall that the assumption that horizontal cylinders have pairwise commensurable moduli always holds for Teichm\"uller curves.  In Appendix~\ref{GeneralizationAppendix}, we will apply this to surfaces that do not necessarily have pairwise commensurable moduli by deforming them so that they do.  Such deformations always exist by the work of \cite{WrightCylDef}. }

The following observation is crucial for us to convert information about the period matrix along the geodesic flow into flat geometric information.
\begin{lemma}\label{lemma:recylinder}
Suppose $r_e=r_{e'}$, then the corresponding cylinders $C_e$ and $C_{e'}$ have the same modulus. Furthermore, if the vanishing cycles corresponding to $e$ and $e'$ are homologous, then $C_e$ and $C_{e'}$ have the same circumference and the same height.
\end{lemma}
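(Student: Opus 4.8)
The first assertion requires essentially no work: it is immediate from the commensurability relation~\eqref{eq:moduli}, since $r_e = r_{e'}$ forces $m(C_e)/m(C_{e'}) = r_e/r_{e'} = 1$, so $C_e$ and $C_{e'}$ have the same modulus.

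For the second assertion, the plan is to translate the hypothesis ``the vanishing cycles corresponding to $e$ and $e'$ are homologous'' into an equality of circumferences via the period pairing, and then to combine this with the first assertion. First I would record two structural facts about the family $(X_t, \omega_t) = g_t \cdot (X,\omega)$: the matrices $g_t = \mathrm{diag}(1, e^t)$ fix the horizontal direction, so the horizontal holonomy of any horizontal curve is constant along the family; and the surfaces $X_t$ are all diffeomorphic to one another, so the groups $H_1(X_t;\bZ)$ are canonically identified and the vanishing cycles $\alpha_e, \alpha_{e'}$ may be regarded as fixed homology classes (defined up to sign). Since $\alpha_e$ is the core curve of the horizontal cylinder $C_e$, it is a closed horizontal trajectory, so $\int_{\alpha_e}\omega$ is a nonzero real number whose absolute value equals the circumference of $C_e$; the same holds for $e'$. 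Hence if $[\alpha_e] = \pm [\alpha_{e'}]$ in $H_1$, then $\bigl| \int_{\alpha_e}\omega \bigr| = \bigl| \int_{\alpha_{e'}}\omega \bigr|$, and the two cylinders have the same circumference.

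To conclude, I would invoke the definition of modulus: the modulus of a horizontal cylinder is the ratio of its height to its circumference (equivalently, the quantity scaled by $e^t$ when $g_t$ is applied). By the first assertion $C_e$ and $C_{e'}$ have the same modulus, and by the previous step they have the same circumference, so they have the same height. The only point requiring any care is the bookkeeping --- carrying out the period computation on a single fixed surface in the family (either some $X_t$ or $X$ itself) and tracking the sign ambiguity inherent in a vanishing cycle --- but since only absolute values of periods are compared, this ambiguity is harmless, and I do not anticipate a genuine obstacle beyond organizing these elementary observations cleanly.
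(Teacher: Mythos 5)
Your proof is correct and follows essentially the same route as the paper: the first claim from the definition of $r_e$ via the commensurability relation, the second from the fact that periods depend only on homology class (giving equal circumferences) combined with equal moduli (giving equal heights). The additional remarks about sign ambiguity and the canonical identification of $H_1(X_t;\bZ)$ along the family are harmless elaborations of the same argument.
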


\begin{proof}
The first claim follows from the definition of $r_e$.  The second claim follows because the vanishing cycles of $e$ and $e'$ are the core curves of $C_e$ and $C_{e'}$, respectively.  Since the period of a curve depends only on its homology class and not on a particular element in the class, the periods of the core curves of $C_e$ and $C_{e'}$ are equal, which implies that their circumferences are, too.  The heights are equal because the moduli and circumferences are equal.
\end{proof}

To analyze the behavior of periods near the nodal surface it is convenient to introduce a new coordinate
\[
s(t):= e^{-2\pi\tfrac{m(C_e)}{r_e}t},
\]
which is independent of $e$, see also \cite[Lem.~2.1]{AulicinoNortonST5}.
 Note that in particular $\lim_{t\rightarrow \infty} s=0$.
 In the sequel we are often interested in the behavior of the periods as $t$ tends to infinity, in which case it becomes more convenient to express everything in terms of the coordinate $s$.
 Depending on the circumstances we will write $\Theta(s)$ instead of $\Theta(t(s))$.

\subsection{The Solution to the Jump Problem for an Adapted Basis}

\textcolor{black}{We now prepare to apply the solution to the jump problem as developed in \cite{HuNortonGenVarFormsAbDiffs} to compute the periods of the $A$-normalized basis $\Theta_1,\ldots,\Theta_g$ along the family $X_t$.
It will be necessary to realize each surface $X_t$ as obtained by plumbing a nodal Riemann surface $Y_t$, i.e., removing small discs around the nodes of $Y_t$ and gluing the resulting boundary components.
The plumbing construction depends on a choice of local coordinates and different choices of coordinates lead to different nodal Riemann surfaces $Y_t$. For example, one can use the local coordinates introduced in \cite[Lemma 2.1]{AulicinoNortonST5}, in which case $Y_t= X'$ for all $t$.
In Case 3 below, it will be necessary to use a different coordinate system and then $Y_t$ will change with $t$. }

It is known classically that a whole neighborhood of a boundary point in the Deligne-Mumford  compactification of the moduli space of Riemann surfaces can be obtained by plumbing.
 Let $(X',\omega')$ be a cylinder pinch of $(X,\omega)$ along the family $(X_t,\omega_t)$. We use plumbing to describe the family $X_t$.
Every node $e$ of $X'$ has two preimages $q_e^{\pm}$ contained in an irreducible component $X_{v(e^{\pm})}$.
We choose a local coordinate chart $z_e^{\pm}$ at the preimage of every node.
Then there exists a family $Y_t$ of \textit{nodal} Riemann surfaces with $Y_{\infty}= X'$ such that the surface $X_t$ is obtained by removing discs $\{|z_e^{\pm}|< \sqrt{|s_e|}\}$ from \textcolor{black}{$Y_t$} and identifying the boundaries $\{|z_e^{\pm}|=\sqrt{ |s_e|}\}$ via the gluing map $z^+_e=\dfrac{s_e}{z_e^-}$, where $s_e= s_e(s)$ is a real-analytic function of $s$.
Here we considered $z_e^{\pm}$ as coordinates in $Y_t$ using local trivializations.
We write
\begin{equation}\label{eq:ae}
s_e(s)=s^{n_e}a_e(1+f_e(s)),
\end{equation}
 where $a_e\neq 0, \,n_e$ is a positive integer, and $f_e(s)=O(s)$ is real-analytic.

Although the periods of interest are independent of the choice of local coordinates used in the plumbing construction, our computation for these periods will be expressed as a series expansion whose terms depend on the local coordinates. Therefore it will be crucial for us to choose a useful coordinate system to make the computation feasible.

Let $\gamma=(e_1,\ldots,e_k)$ be a path in the dual graph $\Gamma$ of $X'$. 
We always consider the edges in a path to be oriented.
Denote the {\em weighted} length of $\gamma$ by
\[
l(\gamma)=\sum_{i=1}^k n_{e_i}.
\]

Given two $A$-normalized differentials $\Theta_i$ and $\Theta_j$ on $X_t$, we define the {\em  \JPD} $d_{\Gamma}(\Theta_i,\Theta_j)$ between $\Theta_i$ and $\Theta_j$ in the dual graph $\Gamma$
to be
\[
d_{\Gamma}(\Theta_i,\Theta_j):=\min\{ l(\gamma)\,|\,\gamma\in L(i,j)\},
\]
where $L(i,j)$ is the space of all (oriented) paths in $\Gamma$ connecting some irreducible component  where $\Theta_i$ is supported to an irreducible component where $\Theta_j$ is supported.

\begin{proposition}
\label{prop:Asymp}
Let $(X',\omega')$ be a cylinder pinch of a Teichm\"{u}ller curve and $\{\Theta_1,\dots, \Theta_g\}$ a basis of $A$-normalized differentials on $X'$. The periods $\int_{\beta j} \Theta_i(s)$ of $B$-cycles  are analytic functions of $s$ and
\[
\int_{\beta_j} \Theta_i(s):= \sum_{e\in E(\Gamma)} \langle\alpha_e,\beta_i\rangle\langle\alpha_e,\beta_j\rangle \ln(s_e)+\text{constant} + O(s^l),
\]
where $l:=d_{\Gamma}(\Theta_i,\Theta_j)$ is the \JPD between $\Theta_i$ and $\Theta_j$, and $\alpha_e$ is the vanishing cycle corresponding to the node $e$.

Here $\langle\alpha_e,\beta_j\rangle$ denotes the algebraic intersection number, computed on some surface $X_t$.
Furthermore, if $\Theta_i$ is holomorphic at every node crossed by $\beta_j$, then the logarithmic term vanishes and
\[
\lim_{s\rightarrow 0} \int_{\beta_j}\Theta_i(s)=\int_{\beta_j} \Theta_i.
\]
\end{proposition}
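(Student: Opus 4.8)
The plan is to invoke the solution to the jump problem from \cite{HuNortonGenVarFormsAbDiffs} directly, translating it into the language of the adapted basis $\cB$ and the coordinate $s$. First I would set up the jump problem: having realized $X_t$ by plumbing the nodal family $Y_t$ along the chosen local coordinates $z_e^\pm$, the $A$-normalized differential $\Theta_i$ on $X_t$ is constructed from the $A$-normalized differential $\Theta_i$ on $X'$ (which may be meromorphic with simple poles at the nodes) by correcting it with a solution to a Riemann--Hilbert-type jump problem on the gluing circles. The principal (logarithmic) part of the $B$-period comes from integrating the meromorphic differential $\Theta_i$ on $X'$ across the plumbing annuli: near a node $e$ with residue $\pm 1$, $\Theta_i$ looks like $\pm\frac{dz_e^\pm}{2\pi i z_e^\pm}$ up to holomorphic terms, and integrating from one plumbing circle to the other along $\beta_j$ contributes a term proportional to $\ln(s_e)$, weighted by how many times $\beta_j$ crosses the node $e$ and how many times $\Theta_i$ has a pole there, i.e.\ by $\langle \alpha_e,\beta_i\rangle\langle\alpha_e,\beta_j\rangle$. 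Summing over all nodes $e \in E(\Gamma)$ gives the stated logarithmic main term.

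Next I would identify the constant term as $\lim_{s\to 0}$ of the regularized period, and control the error. The content of the asymptotic expansion in \cite{HuNortonGenVarFormsAbDiffs} is that the jump-problem correction to $\Theta_i$, restricted to the component where $\beta_j$ lives, is $O(s^{l})$ where the exponent $l$ is governed by the "distance" one must travel in the plumbing construction to propagate the pole/residue data of $\Theta_i$ to the support of $\Theta_j$; with the coordinate normalization $s_e(s) = s^{n_e}a_e(1+f_e(s))$ from \eqref{eq:ae}, this distance is exactly the weighted length $d_\Gamma(\Theta_i,\Theta_j)$ of the shortest path in $\Gamma$ joining a component supporting $\Theta_i$ to one supporting $\Theta_j$. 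So the expansion reads $\int_{\beta_j}\Theta_i(s) = \sum_e \langle\alpha_e,\beta_i\rangle\langle\alpha_e,\beta_j\rangle\ln(s_e) + c + O(s^{l})$ with $l = d_\Gamma(\Theta_i,\Theta_j)$. Analyticity in $s$ follows because the plumbing family depends analytically on the plumbing parameters and the $s_e$ are real-analytic in $s$ (and the logarithm is handled by the explicit $\ln(s_e)$ terms being extracted); here I would cite the analyticity statement from \cite{HuNortonGenVarFormsAbDiffs} rather than reprove it. Finally, the last assertion: if $\Theta_i$ is holomorphic at every node crossed by $\beta_j$, then for each such node $e$ we have either $\langle\alpha_e,\beta_i\rangle = 0$ (so the term is absent anyway) or $\beta_j$ does not cross $e$ (so $\langle\alpha_e,\beta_j\rangle = 0$); in either case the coefficient of $\ln(s_e)$ vanishes. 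Hence there is no logarithmic term, the expansion is genuinely analytic at $s = 0$, and evaluating at $s=0$ recovers the period $\int_{\beta_j}\Theta_i$ computed on the nodal surface $X'$ itself, since the $A$-normalized basis on $X'$ was defined precisely as the limit of the $A$-normalized basis on $X_t$.

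The main obstacle, and the step requiring the most care, is verifying that the error exponent is exactly the combinatorial quantity $d_\Gamma(\Theta_i,\Theta_j)$ rather than a cruder bound. This requires tracking, through the iterative solution of the jump problem, how a residue (or pole) of $\Theta_i$ at one node forces corrections at neighboring gluing circles, each correction picking up a factor of order $s_e^{n_e}$ as it crosses an edge $e$; the geometry of $\Gamma$ then dictates that the cheapest way to "feel" $\Theta_i$ from the support of $\Theta_j$ costs exactly $\min_{\gamma \in L(i,j)} \sum_i n_{e_i}$. I expect this to follow from the precise form of the asymptotic expansion in \cite[Thm.]{HuNortonGenVarFormsAbDiffs} once one matches notation, together with the observation from \eqref{eq:ae} that $\ln(s_e) = n_e\ln s + \ln a_e + O(s)$ so that the $O(s)$ pieces of the $s_e$ get absorbed into the constant and error terms. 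A secondary point to check is the sign and normalization in the coefficient $\langle\alpha_e,\beta_i\rangle\langle\alpha_e,\beta_j\rangle$, which I would pin down by testing on the model case of a single separating or non-separating node, using the residue conventions fixed in Section~\ref{AsymptANormDiffsSect} for $\Theta_i$ on $X'$.
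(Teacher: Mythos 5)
Your overall strategy matches the paper's: invoke the jump problem from \cite{HuNortonGenVarFormsAbDiffs}, read off the logarithmic term from the residues, and bound the error by the \JPD via path-counting in the iterative solution. Two points need tightening, however. First, you assume without verification that the jump problem solution built from the $A$-normalized differentials on $X'$ coincides with the $A$-normalized basis $\{\Theta_i(s)\}$ on $X_t$; a priori the jump problem only produces \emph{some} family of holomorphic differentials, and one must check that their $A$-periods are $\delta_{ij}$. The paper supplies this identification: it follows from the normalization of the Cauchy kernel in \cite{HuNortonGenVarFormsAbDiffs} together with the fact that the residues of each $\Theta_i$ at the two preimages of a node are opposite. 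Without this step the variational formulas do not apply to $\Theta_i(s)$.

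Second, your justification of $\lim_{s\to 0}\int_{\beta_j}\Theta_i(s) = \int_{\beta_j}\Theta_i$ is circular. You write that the $A$-normalized basis on $X'$ was ``defined precisely as the limit of the $A$-normalized basis on $X_t$,'' but Section~\ref{AsymptANormDiffsSect} defines it \emph{intrinsically} on the irreducible components (holomorphic and $A$-normalized for $i\leq g'$, with prescribed residues for $i>g'$). That the limit of $\Theta_i(s)$ agrees with this intrinsic basis is part of what the proposition asserts; the paper extracts it from the explicit constant term in \cite[Formula~(4.11)]{HuNortonGenVarFormsAbDiffs}, which simplifies once the logarithmic term is shown to vanish. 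With these two points repaired, the rest of your outline --- the vanishing of the coefficient $\langle\alpha_e,\beta_i\rangle\langle\alpha_e,\beta_j\rangle$ when $\Theta_i$ is holomorphic at the nodes crossed by $\beta_j$, and the path-counting argument giving the exponent $d_\Gamma(\Theta_i,\Theta_j)$ --- is sound and matches the paper's use of \cite[Prop.~3.4, Cor.~4.6]{HuNortonGenVarFormsAbDiffs}.
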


\begin{proof}
The result follows directly from the solution to the jump problem as developed in \cite[Thm. 4.2 + Cor. 4.6]{HuNortonGenVarFormsAbDiffs}.
\textcolor{black}{We explain how to adapt the results in (loc. cit.) to our notation.}
The {\em solution to the jump problem} as defined in (loc. cit.) is a family of differentials $\{\Theta'_1(s),\ldots,\Theta'_g(s)\}$ on $X_t$ constructed from the basis of differentials $\Theta_1,\ldots,\Theta_g$ on $X'$.
\textcolor{black}{Note that in {(loc. cit.)} the solution to the jump problem is a differential form  defined on a Riemann surface with boundary $\hat{X_t}$ such that $X_t$ is obtained by gluing the boundary components via $z\mapsto s_e/z$. Since the the solution to the jump problem agrees under the gluing, it descends to a holomorphic differential form on $X_t$.}
The choice of normalization for the Cauchy kernel used in \cite{HuNortonGenVarFormsAbDiffs}, together  with the fact that the residues of $\Theta_i$ at nodes are equal with opposite sign, implies that that solution to the jump problem yields a basis of holomorphic  differentials normalized against a basis $\cB$ adapted to the cylinder pinch.

In particular, the solution to the jump problem for $\Theta_1,\ldots,\Theta_g$ on $X_t$ agrees with $\Theta_1(s),\ldots,\Theta_g(s)$, i.e., $\Theta'_i(s)=\Theta_i(s)$ for $i=1,\ldots,g$.
We can thus apply the variational formulas \cite[Thm. 4.2 + Cor. 4.6]{HuNortonGenVarFormsAbDiffs} to compute the periods $\int_{\beta_j}\Theta_i(s)$.

In \cite[Cor.~4.6]{HuNortonGenVarFormsAbDiffs}, the logarithmic term of $\int_{\beta_j}\Theta_i(s)$ is computed to be
\[ \sum_{e\in E(\Gamma)}\langle\alpha_e,\beta_i\rangle\langle\alpha_e,\beta_j\rangle \ln(s_e).
\]
Note that  $\langle\alpha_e,\beta_i\rangle\langle\alpha_e,\beta_j\rangle$ is non-zero only if both $\beta_i$ and $\beta_j$ cross the node $e$, in which case $\Theta_i$ has a simple pole at $e$.
It then follows that the logarithmic term vanishes if $\Theta_i$ has no poles at the nodes crossed by $\beta_j$.
In that case, the constant term in \cite[Formula (4.11)]{HuNortonGenVarFormsAbDiffs} simplifies, and we conclude that the constant term of $\int_{\beta_j}\Theta_i(s)$ is equal to $\int_{\beta_j} \Theta_i$.

It remains to show that the remaining terms of the $s$-expansion of  $\int_{\beta_j}\Theta_i(s)$ are of order $O(s^l)$, where $l$ is the jump problem distance of $\Theta_i$ and $\Theta_j$.
For this we recall that in \cite{HuNortonGenVarFormsAbDiffs} the differential $\Theta_i(s)$ is written as
$\Theta_i + \sum_{k=1}^{\infty} \eta^{(k)}(s)$, where $\Theta_i$ is the differential on $X'$, which is independent of $s$, and $\eta^{(k)}(s)$ are explicitly constructed holomorphic differentials \cite{HuNortonGenVarFormsAbDiffs}.
For our purposes, it suffices to know that by \cite[Prop.~3.4]{HuNortonGenVarFormsAbDiffs}
\begin{equation}\label{eq:Paths}
\int_{\beta_j}\eta^{(k)}(s)=\sum_{\gamma\in L_k(i,j)} \left (C_{\gamma}(i,j)s^{l(\gamma)}+O(s^{l(\gamma)+1})\right),
\end{equation}
where $L_k(i,j)$ is the set of oriented paths in $\Gamma$ consisting of $k$ edges with starting point at some irreducible component supporting $\Theta_i$ and endpoint some irreducible component supporting $\beta_j$ and $C_{\gamma}(i,j)$ is a constant.
 (Note that for some $k$, $L_k(i,j)$ may be the empty set.)
By considering the lowest order term in \Cref{eq:Paths}, we conclude that $\int_{\beta_j}\sum_{k=1}^{\infty}\eta^{(k)}(s)=O(s^l)$, where $l$ is the \JPD from $\Theta_i$ to $\Theta_j$.
\end{proof}

More precise formulas for the $s$-expansion of periods were derived in \cite{HuNortonGenVarFormsAbDiffs}, which in principle facilitate the computation of periods to arbitrary precision.
 In the general formula, even the lowest order non-constant term involves multiple contributions.
 We will only need an expression for the constant $C_{\gamma}(i,j)$ in the case of paths consisting of one or two edges.

\begin{lemma}
\label{lemma:lowestOrder}Suppose $\gamma= (e)$ is a path consisting of a single oriented edge with starting point  in the irreducible component $X_{v(e+)}$ and endpoint in the component $X_{v(e-)}$.
Then 
\[
C_{\gamma}(i,j)= -a_e\hol(\Theta_i(q_e^+))\hol(\Theta_j(q_e^-)),
\]
where $\hol(\Theta_i(q_e^+))$ denotes the evaluation of the holomorphic part of $\Theta_i$ in the local coordinate charts $z_e$,\footnote{See the discussion titled ``Notation Convention'' about evaluating a differential at a point in \cite[$\S$4.4]{AulicinoNortonST5}.} and $a_e$ was defined in \Cref{eq:ae}.

If $\gamma=(e_1,e_2)$ with $e_1\neq -e_2$, then 
\[
C_{\gamma}(i,j)= a_{e_1}a_{e_2}\hol(\Theta_i(q_{e_1}^+))\hol(\Theta_j(q_{e_2}^-))\boldsymbol{\omega}_{v(e_1^-)}(q_{e_1}^-,q_{e_2}^+),
\]
where $\boldsymbol\omega_{v(e_1^-)}$ is the $A$-normalized bidifferential on the component $X_{v(e_1^-)}=X_{v(e_2^+)}$.
\end{lemma}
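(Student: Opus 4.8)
The plan is to trace through the construction of the solution to the jump problem in \cite{HuNortonGenVarFormsAbDiffs} and read off the leading coefficient attached to a path of one or two edges. Recall that in (loc.\ cit.) one writes $\Theta_i(s)=\Theta_i+\sum_{k\geq 1}\eta^{(k)}(s)$, where $\eta^{(1)}(s)$ solves the jump problem with jump data determined by $\Theta_i$ and, inductively, $\eta^{(k+1)}(s)$ solves the jump problem with jump data determined by $\eta^{(k)}(s)$; on each irreducible component the solution is produced by a Cauchy-type integral against the $A$-normalized bidifferential $\boldsymbol\omega_v$ of that component. By \Cref{eq:Paths}, the terms of $\int_{\beta_j}\eta^{(k)}(s)$ of order $s^{l(\gamma)}$ are indexed by the paths $\gamma\in L_k(i,j)$, and $C_\gamma(i,j)$ is by definition the leading coefficient attached to such a path. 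The approach is therefore computational: follow $\gamma$ edge by edge and collect the leading factors contributed at each node and each edge.

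For the one-edge path $\gamma=(e)$ I would expand $\Theta_i$ near $q_e^+$ in the chart $z_e^+$ and transport it to the chart $z_e^-$ using the gluing $z_e^+=s_e/z_e^-$. One finds that the part of the jump of $\Theta_i$ across the collar of $e$ relevant for the correction on $X_{v(e^-)}$ has leading term $-s_e\,\hol(\Theta_i(q_e^+))\,(z_e^-)^{-2}\,dz_e^-$: the factor $-s_e$ and the double pole come from $dz_e^+=-s_e(z_e^-)^{-2}dz_e^-$, the higher Taylor coefficients of $\Theta_i$ at $q_e^+$ contribute only terms of order $s_e^2$, and the polar part of $\Theta_i$ at $q_e^+$---present exactly when $\langle\alpha_e,\beta_i\rangle\neq 0$---produces the $(z_e^-)^{-1}$ residue that feeds the logarithmic term already isolated in \Cref{prop:Asymp}. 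Solving the jump problem on $X_{v(e^-)}$ against $\boldsymbol\omega_{v(e^-)}$ then yields, to leading order, the differential of the second kind $-s_e\,\hol(\Theta_i(q_e^+))\,\boldsymbol\omega_{v(e^-)}(\,\cdot\,,q_e^-)$ with a double pole at $q_e^-$, and integrating over $\beta_j$, which is supported on $X_{v(e^-)}$, and using the reciprocity relation $\int_{\beta_j}\boldsymbol\omega_{v(e^-)}(\,\cdot\,,q)=\hol(\Theta_j(q))$ gives $-s_e\,\hol(\Theta_i(q_e^+))\,\hol(\Theta_j(q_e^-))$. Since $s_e=s^{n_e}a_e(1+f_e(s))$ with $f_e(s)=O(s)$ by \Cref{eq:ae} and $l(\gamma)=n_e$, the coefficient of $s^{n_e}$ is $-a_e\,\hol(\Theta_i(q_e^+))\,\hol(\Theta_j(q_e^-))$, as claimed.

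For the two-edge path $\gamma=(e_1,e_2)$ with $e_1\neq -e_2$ I would iterate. The first step produces on the intermediate component $X_{v(e_1^-)}=X_{v(e_2^+)}$ the differential $-s_{e_1}\hol(\Theta_i(q_{e_1}^+))\,\boldsymbol\omega_{v(e_1^-)}(\,\cdot\,,q_{e_1}^-)$, which is holomorphic near $q_{e_2}^+$ (this is precisely where the hypothesis $e_1\neq -e_2$ is used, as it guarantees $q_{e_1}^-\neq q_{e_2}^+$) and takes there the value $-s_{e_1}\hol(\Theta_i(q_{e_1}^+))\,\boldsymbol\omega_{v(e_1^-)}(q_{e_2}^+,q_{e_1}^-)$ in the chart $z_{e_2}^+$. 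Feeding this differential in as the jump data for $\eta^{(2)}$ at the collar of $e_2$ and running the single-edge computation once more on $X_{v(e_2^-)}$ picks up a second factor $-s_{e_2}$ and, after integration over $\beta_j$ supported on $X_{v(e_2^-)}$, the factor $\hol(\Theta_j(q_{e_2}^-))$. Collecting the factors, using $s_{e_i}=s^{n_{e_i}}a_{e_i}(1+O(s))$ and the symmetry $\boldsymbol\omega_v(p,q)=\boldsymbol\omega_v(q,p)$, the coefficient of $s^{n_{e_1}+n_{e_2}}=s^{l(\gamma)}$ is $a_{e_1}a_{e_2}\,\hol(\Theta_i(q_{e_1}^+))\,\hol(\Theta_j(q_{e_2}^-))\,\boldsymbol\omega_{v(e_1^-)}(q_{e_1}^-,q_{e_2}^+)$; the overall sign is now $+$ because the two collars each contribute a factor $-s_{e_i}$.

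The main obstacle I anticipate is bookkeeping rather than conceptual: reconciling on the nose the sign and normalization conventions of \cite{HuNortonGenVarFormsAbDiffs}---the placement of $2\pi i$ in the Cauchy kernel and in $\boldsymbol\omega_v$, the orientation of the collars and of the edges of $\Gamma$, and the identification of $\hol(\Theta_i(q_e^{\pm}))$ with the correct Laurent coefficient in the chosen chart $z_e$---so that the displayed identities hold exactly and not merely up to an unspecified constant. A secondary point requiring care is to confirm that nothing else enters at order $s^{l(\gamma)}$: that the polar parts of the $\Theta_i$ contribute only to the logarithmic term of \Cref{prop:Asymp}, that the real-analytic factor $f_e(s)=O(s)$ does not perturb the coefficient of $s^{l(\gamma)}$, and that interactions between the correction terms at distinct nodes are of strictly higher order along a path of length at most two.
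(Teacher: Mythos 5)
The paper's own proof of this lemma is a single sentence: it simply cites \cite[Prop.~3.4]{HuNortonGenVarFormsAbDiffs} specialized to paths of length one and two. Your proposal unpacks that citation by tracing through the iterative Cauchy-kernel construction underlying the solution to the jump problem, which is precisely the content of the cited proposition, so the two arguments are in substance the same route taken at different levels of detail. Your mechanism is sound: the factor $-s_e$ and the double pole come from transporting the local Taylor expansion through the gluing $z_e^+=s_e/z_e^-$, the correction on the next component is a second-kind differential expressed via the $A$-normalized bidifferential, and the $\beta_j$-period is extracted by reciprocity; iterating once gives the two-edge formula with the sign $(-1)^2=+1$ and the symmetry $\boldsymbol\omega_v(p,q)=\boldsymbol\omega_v(q,p)$ reconciling the argument order. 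You correctly identify that contributions at order $s^{l(\gamma)}$ from longer paths and from the real-analytic factor $f_e(s)=O(s)$ are of strictly higher order, and that the polar part of $\Theta_i$ at $q_e^+$ feeds only the logarithmic term already isolated in \Cref{prop:Asymp}. The only item that genuinely needs cross-checking against \cite{HuNortonGenVarFormsAbDiffs} is the exact normalization in the reciprocity identity $\int_{\beta_j}\boldsymbol\omega_v(\cdot,q)=\hol(\Theta_j(q))$ (possible factors of $2\pi i$ and the sign convention on the Cauchy kernel), but since the lemma's statement is itself calibrated to those conventions through Prop.~3.4, this is a consistency check rather than a gap.
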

We refer to \cite{HuNortonGenVarFormsAbDiffs} for a precise definition of the bidifferential. For us it will be enough to know that this expression $\boldsymbol{\omega}_{v(e_1^-)}$ is a meromorphic differential on $X_t \times X_t$ and that the expression $\boldsymbol{\omega}_{v(e_1^-)}(q_{e_1}^-,q_{e_2}^+)$ is the evaluation of $\boldsymbol{\omega}$ at the points $q_{e_1}^-$ and $q_{e_2}^+$ in the chosen coordinates $z_e$.

\begin{proof}
This follows from \cite[Prop. 3.4]{HuNortonGenVarFormsAbDiffs} in the special case of a path of length one and two.
\end{proof}

We will only use the above lemma in the case where $\Theta_i$ and $\Theta_j$ are holomorphic at the nodes in question.

\subsection{Dependence on Local Coordinates}\label{sec:Coord}
In order to obtain more precise information, we will have to choose the local coordinates $z_e$ near the nodes carefully. Two specific choices will be most important for us.

 We will use the following coordinates in  our analysis of Case 6 (Section~\ref{Case6EqModSect}).
In \cite[Lem.~2.1]{AulicinoNortonST5}, local coordinates $z_e$ are constructed such that $s_e=s^{r_e}$ and such that the family $Y_t$ given above is constant and equal to $X'$.  In other words, the moduli of the nodal Riemann surfaces used in the plumbing construction remain fixed in the family.  Recall that $r_e$ was defined in \Cref{eq:moduli} and is related to the modulus of the cylinder $C_e$.
The stable differential $\omega'$ locally near the nodes is of the form $w_e\dfrac{dz_e}{z_e}$, where $w_e$ is the circumference of the cylinder $C_e$.
Notice that in (loc. cit.) vertical cylinders are used instead of horizontal cylinders. This changes the formula for $z_e$ in \cite[Lem.~2.1]{AulicinoNortonST5}, but not the formula for $s_e$.

In Case 3, it will be more convenient to use a different coordinate system. The stable curve in this case is of geometric genus one. On each genus zero component, we use the standard coordinate $z$ and let $z_e:= z-q_e$ be the coordinate centered at $q_e$, and on the elliptic component, we use any choice of local coordinates centered at the nodes.
Consequentially, we have no flat geometric interpretation of the powers $n_e$ in the expansion $s_e(s)=s^{n_e}a_e(1+f_e(s))$ anymore.
The advantage of this coordinate system is that we know the Cauchy kernel and the bidifferential on $\bP^1$ are $K_{\bP^1}(z,w)=\dfrac{1}{z-w}dz$ and $\boldsymbol{\omega}_{\bP^1}=-\dfrac{1}{(z-w)^2}dzdw$, respectively.  In particular, neither the Cauchy kernel nor the bidifferential have a holomorphic part at the origin. 
As a consequence, we obtain the following observation.
\begin{lemma}\label{lemma:noContr}
Consider the oriented path $\gamma=(e,-e)$ for some oriented edge $e$ that ends in a genus zero component.
Then $\gamma$ does not contribute to $\int_{\beta_j}\Theta_i(s)$.
\end{lemma}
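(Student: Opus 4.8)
The plan is to trace through the structure of the terms in Proposition~\ref{prop:Asymp} contributed by the two-edge path $\gamma = (e, -e)$ and show that the relevant constant $C_\gamma(i,j)$ vanishes in the coordinate system chosen for Case 3. First I would recall that, by Proposition~\ref{prop:Asymp} and Equation~\eqref{eq:Paths}, the contribution of any oriented path $\gamma$ to $\int_{\beta_j}\Theta_i(s)$ appears as a term $C_\gamma(i,j) s^{l(\gamma)} + O(s^{l(\gamma)+1})$, and that for a path of length two, $\gamma = (e_1, e_2)$ with $e_1 \neq -e_2$, the constant $C_\gamma(i,j)$ is given by Lemma~\ref{lemma:lowestOrder}. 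However, our path has $e_2 = -e_1$, so the length-two formula in Lemma~\ref{lemma:lowestOrder} does not directly apply; instead I would go back to \cite[Prop.~3.4]{HuNortonGenVarFormsAbDiffs}, which expresses the contribution of a general path in terms of iterated evaluations of Cauchy kernels and bidifferentials at the nodes along the path, with the intermediate factor at the vertex $v(e^-)$ (the genus zero component where the path $(e,-e)$ turns around) being the bidifferential $\boldsymbol{\omega}_{v(e^-)}$ evaluated at the two preimages $q_e^-$ and $q_e^-$ — or, more precisely, the holomorphic part of that bidifferential at the node.

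The key observation, which I would then invoke, is the one highlighted in Section~\ref{sec:Coord}: with the coordinate choice $z_e := z - q_e$ on each genus zero component, the Cauchy kernel is $K_{\bP^1}(z,w) = \frac{1}{z-w}\,dz$ and the bidifferential is $\boldsymbol{\omega}_{\bP^1} = -\frac{1}{(z-w)^2}\,dz\,dw$, and neither of these has a holomorphic part at the origin. Since the contribution of the turning-around step at $v(e^-)$ in $\gamma = (e,-e)$ is governed precisely by the holomorphic part of the bidifferential (respectively Cauchy kernel) on $X_{v(e^-)}$ evaluated at the node, and that component is $\bP^1$ with this coordinate system, the relevant holomorphic part is zero. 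Hence every term in the $s$-expansion coming from the path $(e,-e)$ vanishes, which is the assertion of the lemma.

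The main obstacle is bookkeeping: making precise exactly which factor in the product expression of \cite[Prop.~3.4]{HuNortonGenVarFormsAbDiffs} corresponds to the ``return'' through the genus zero vertex, and confirming that it is indeed the holomorphic part at the origin (and not, say, a residue or a full Laurent coefficient) that enters. I would handle this by matching notation carefully against \cite[Prop.~3.4, Cor.~4.6]{HuNortonGenVarFormsAbDiffs} — the same dictionary already set up in the proof of Proposition~\ref{prop:Asymp} — noting that when a path enters and immediately leaves a vertex $X_v$ through the same node, the iterated-kernel construction inserts the value of the $A$-normalized bidifferential $\boldsymbol{\omega}_v$ (or the Cauchy kernel) at that node's preimage, and that by construction only its holomorphic part at the relevant point contributes to the analytic (non-logarithmic) part of the period. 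Since $\boldsymbol{\omega}_{\bP^1}$ has a pole of order two along the diagonal and is otherwise regular with vanishing holomorphic part at any finite point in the $z_e$-coordinate, the contribution is identically zero, completing the proof.
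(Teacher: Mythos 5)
Your proposal follows essentially the same route as the paper: both go back to the iterated-kernel expansion of \cite{HuNortonGenVarFormsAbDiffs} and observe that, in the chosen coordinates on $\bP^1$, the kernel controlling the $(e,-e)$ ``turn-around'' has vanishing holomorphic part at the preimage of the node, so that contribution drops out of every term in the $s$-expansion. The one place you hedge --- whether the relevant factor is the Cauchy kernel or the bidifferential --- is resolved in the paper by citing Formula (3.11) of \cite{HuNortonGenVarFormsAbDiffs}, which identifies the Cauchy kernel as the decisive object; since $K_{\bP^1}(z,w)=\frac{1}{z-w}\,dz$ equals its own singular part, the conclusion is the same.
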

In other words $\gamma$ is a path passing through a node onto a component of genus zero and immediately returns back through the same node. 

\begin{proof}
It follows from \cite[Formula (3.11)]{HuNortonGenVarFormsAbDiffs} that  a path of the form $\gamma=(e,-e)$  does not contribute to the solution to the jump problem if the holomorphic part of the Cauchy kernel is zero at the preimage of the node in the genus zero component. Since the Cauchy  kernel on $\PP^1$ is  $K_{\bP^1}(z,w)=\dfrac{1}{z-w}dz$, the holomorphic part of the Cauchy kernel is zero and the claim follows. 
\end{proof}

\section{Flat Geometry and the Forni Subspace}
\label{ForniSubspaceSection}

\subsection{Forni $B$-Matrix}

Let $x=(X,\omega)$ be a point in a stratum. For  differentials $\alpha,\beta\in H^{1,0}(X)$, the \emph{Forni $B$-form} is the bilinear form defined by
\[
B_{x}(\alpha,\beta):= \int_X \alpha\beta\dfrac{\overline{\omega}}{\omega}.
\]
We also need the real version. Let $\eta\in H^1(X,\bR)$. There exists a unique holomorphic form $h(\eta)\in H^{1,0}(X)$ with $[\Re h(\eta)]=\eta$ and the \emph{real $B^{\bR}$-form} is defined by
\[
B^{\bR}_x (\eta,\eta')= B_x(h(\eta),h(\eta'))
\]
for all $\eta,\eta'\in H^1(X,\bR)$.

\subsection{The Holomorphic Forni Subspace}

Let $\nu$ be an ergodic $\splin$-invariant measure on an invariant subvariety. The Forni subbundle $\mathcal{F}$ is the maximal $\nu$-measurable $\splin$-invariant isometric subbundle of the (real) Hodge bundle. Its fiber at a  point $x=(X,\omega)$ is the Forni subspace
\[
F(x):= \bigcap_{g\in \splin}\, g^{-1}\Ann B^{\bR}_{gx}\subseteq H^1(X,\bR).
\]
Here the annihilator of a bilinear form is
\[\Ann B^{\bR}_x:=\{ \eta \in H^{1}(X,\bR)\,|\, B^{\bR}(\eta,\eta')=0 \text{ for all } \eta'\in H^{1}(X,\bR)\}.
\]

\begin{definition}
Define the {\em holomorphic Forni subspace} to be
$$F^{1,0}(x):=\{ \omega\in H^{1,0}(X)\,|\, [\Re \omega]\in F(x)\}.$$
\end{definition}

By  \cite[Thm. 2.4]{AvilaEskinMollerForniBundle} and \cite[Lemma 3.4]{ForniMatheusZorichLyapSpectHodge}, $F(x)$ is Hodge star invariant.  Thus,
\[
F(x)\otimes_{\bR} \bC= F^{1,0}(x)\oplus \overline{F^{1,0}(x)}\subseteq H^1(X,\bC).
\]
Hence, the holomorphic Forni subspaces are fibers of a subbundle of the Hodge bundle.
Since the Forni subspace  $F(x)$ is contained in $\Ann_x^{\bR}$, it follows that
\begin{equation}\label{eq:ForniAnn}
F^{1,0}(x)\subseteq \Ann B_x.
\end{equation}

\begin{remark}\label{rem:LyapunovZero}
The Forni B-form measures the variation of the period matrix along the Teichm\"uller flow, see \cite[Lem.~2.2]{ForniMatheusZorichLyapSpectHodge}. In particular, if the Forni subspace of an orbit closure is non-trivial, in some choice of basis, the Forni B-matrix has a zero row.  Hence, the determinant of the derivative of the period matrix along the geodesic flow is constant.
\end{remark}

\subsection{The Criterion from \cite{AulicinoZeroExpGen3}}

The following lemma \cite[Lem.~4.4]{AulicinoZeroExpGen3} was proven for the real Forni subspace and can be easily adapted to the holomorphic Forni subspace.

\begin{lemma}
\label{AulForniLem}
Let $(X, \omega)$ be a translation surface with orbit closure $\cM$.  Let $C$ be a cylinder on $(X,\omega)$ with core curve $\gamma$.  If $\cM$ has nontrivial Forni subspace, then for all $\eta \in F(X, \omega)$, we have $\int_\gamma \eta = 0$, and for all $\Theta \in \hF$, we have $\int_\gamma \Theta = 0$. 
\end{lemma}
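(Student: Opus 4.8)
The plan is to deduce the statement about $\hF$ from the corresponding statement about the real Forni subspace $F(X,\omega)$, and to prove the latter by a Hodge-norm estimate along the Teichm\"uller geodesic that stretches the cylinder $C$.

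First I would record the reduction. The period map $P_\gamma\colon H^1(X,\bC)\to\bC$, $[\alpha]\mapsto\int_\gamma\alpha$, is $\bC$-linear, so once we know it vanishes on $F(X,\omega)\subseteq H^1(X,\bR)$ it vanishes on the complex span $F(X,\omega)\otimes_{\bR}\bC$; by the identity $F(X,\omega)\otimes_{\bR}\bC=\hF\oplus\overline{\hF}$ recalled above, this gives $\int_\gamma\Theta=0$ for all $\Theta\in\hF$. Hence it suffices to prove $\int_\gamma\eta=0$ for every $\eta\in F(X,\omega)$. If $F(X,\omega)=0$ there is nothing to prove, so fix $0\neq\eta\in F(X,\omega)$.

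For the real statement, after rotating the flat structure I would assume $C$ is horizontal, with circumference $w$ and height $h$, and apply the flow $g_t$ from Section~\ref{PrelimSect}, so that $C_t:=g_t(C)$ has circumference $w$ and height $e^t h$. Along the geodesic $t\mapsto g_t\cdot(X,\omega)$ the marked topological surface is unchanged, so the Gauss--Manin (equivalently Kontsevich--Zorich) transport of $[\gamma]$ is the class $\gamma_t$ of the core curve of $C_t$, and the transport $\eta_t$ of $\eta$ lies in $F(g_t\cdot(X,\omega))$ because $\cF$ is $\splin$-invariant. Since transport on $H^1$ is dual to transport on $H_1$, the pairing is preserved: $\int_{\gamma_t}\eta_t=\int_\gamma\eta=:c$ for all $t\geq 0$. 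Now I would combine two facts. First, $\cF$ is an \emph{isometric} subbundle of the real Hodge bundle (the defining property of the Forni subbundle recalled in this section, cf.\ \cite[Thm.~2.4]{AvilaEskinMollerForniBundle}, \cite[Lemma~3.4]{ForniMatheusZorichLyapSpectHodge}), so $\|\eta_t\|_{\mathrm{Hodge}}=\|\eta\|_{\mathrm{Hodge}}=:N$ is constant in $t$. Second, bounding the harmonic representative of the Poincar\'e dual of $\gamma_t$ by the closed $1$-form $\tfrac{1}{e^t h}\mathbf 1_{C_t}\,dy$ supported on $C_t$ yields
\[
\|\gamma_t\|_{\mathrm{Hodge}}^2\ \leq\ \frac{w}{e^t h}\ \longrightarrow\ 0\qquad(t\to\infty).
\]
By the Cauchy--Schwarz inequality for the Hodge pairing between $H_1$ and $H^1$,
\[
|c|\ =\ \Bigl|\int_{\gamma_t}\eta_t\Bigr|\ \leq\ \|\gamma_t\|_{\mathrm{Hodge}}\,\|\eta_t\|_{\mathrm{Hodge}}\ \leq\ N\sqrt{w/h}\;e^{-t/2}\ \longrightarrow\ 0,
\]
so $c=0$. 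Together with the first paragraph, this proves the lemma.

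The step I expect to be the main obstacle is the isometry input: that the Kontsevich--Zorich cocycle restricted to $\cF$ preserves the Hodge metric (equivalently, lies in a compact group for it). This is exactly how the Forni subbundle was introduced in this section, so it is available here; the remaining ingredients --- duality of Gauss--Manin transport on $H_1$ and $H^1$, and the elementary estimate of the Hodge norm of a cylinder core curve in terms of the cylinder's dimensions --- are routine, with only minor bookkeeping needed to match the normalization of $g_t$ used elsewhere in the paper.
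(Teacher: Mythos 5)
Your argument is correct, and the reduction from the holomorphic to the real statement (passing through $F(X,\omega)\otimes_{\bR}\bC=\hF\oplus\overline{\hF}$, equivalently through Hodge-star invariance) is the same computation the paper makes. For the real statement the paper simply cites \cite[Lem.~4.4]{AulicinoZeroExpGen3}, and your isometric-subbundle/Hodge-norm estimate along the Teichm\"uller geodesic is exactly the standard argument underlying that citation, so this is essentially the same approach, just carried out self-containedly rather than by reference.
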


\begin{proof}
The statement for the real Forni subspace is exactly \cite[Lem.~4.4]{AulicinoZeroExpGen3}.

Since the Forni subspace $F(x)$ is Hodge star invariant, we have $[\Re \Theta],[\Im \Theta]\in F(x)$ and thus $
\int_{\gamma}\Theta = \int_{\gamma} \Re \Theta + i \int_{\gamma} \Im \Theta =0$, by
\cite[Lem.~4.4]{AulicinoZeroExpGen3}.
\end{proof}

The following proposition will not be used in this manuscript.  Nevertheless, we include it here because it generalizes \cite[Cor.~4.5]{AulicinoZeroExpGen3}.  We believe that the result here will be valuable in the study of Forni subspaces.

\begin{proposition}
\label{ForniSubspCrit}
Let $(X, \omega)$ be a genus $g$ translation surface with orbit closure $\cM$.  Let $X$ admit an absolute homology basis $\cB = \{a_1, \ldots , a_g, b_1, \ldots , b_g\}$.  Assume that for all $r$, $\{a_1, \ldots , a_r, b_1, \ldots , b_r\}$ spans a $2r$-dimensional symplectic subspace of $H_1(X, \bR)$, but we do \emph{not} assume that for any $r$, either $\{a_1, \ldots , a_r\}$ or $\{b_1, \ldots , b_r\}$ span an isotropic subspace of $H_1(X, \bR)$.  Let
\begin{itemize}
\item $\cB' = \{a_1, \ldots , a_s, b_1, \ldots , b_s, b_{s+1}, \ldots b_r \}$, where $s < r$ and $ \{b_{s+1}, \ldots, b_r\}$ spans an isotropic subspace of $H_1(X, \bR)$, or
\item $\cB' = \{b_1, \ldots, b_r\}$, spans an isotropic subspace of $H_1(X, \bR)$.
\end{itemize}
If for each $\gamma \in \cB'$, there exists $M_{\gamma} \in \cM$ such that $\gamma$ is the core curve of a cylinder on $M_{\gamma}$, then the Forni subspace of $\cM$ has dimension at most $2(g-r)$.
\end{proposition}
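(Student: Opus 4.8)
The plan is to combine Lemma~\ref{AulForniLem} with the flatness of the Forni subbundle, and then run a short symplectic linear-algebra argument in absolute homology (all orthogonals below are taken with respect to the intersection form on $H_1(X;\bR)$). If the Forni subspace of $\cM$ is trivial the bound is immediate, since $b_r\in\cB$ forces $r\le g$ and hence $2(g-r)\ge 0$; so assume it is nontrivial and write $F=F(X,\omega)$. The first point is that $F$ is a \emph{symplectic} subspace of $H^1(X;\bR)$: by the Hodge-star invariance of $F$ recalled above (from \cite[Thm.~2.4]{AvilaEskinMollerForniBundle} and \cite[Lem.~3.4]{ForniMatheusZorichLyapSpectHodge}), any $0\ne v\in F$ satisfies $\ast v\in F$, and $\langle v,\ast v\rangle$ equals, up to sign, the squared Hodge norm of $v$, hence is nonzero, where $\langle\cdot,\cdot\rangle$ denotes the cup-product form. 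Transporting $F$ to $H_1(X;\bR)$ by Poincar\'e duality, I regard it as a symplectic subspace there; for a homology class $\gamma$, the vanishing $\int_\gamma\eta=0$ for all $\eta\in F$ is then equivalent to $\gamma\in F^{\perp}$.

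The heart of the argument is to show $\cB'\subseteq F^{\perp}$. Since $\mathcal F$ is isometric it is invariant under parallel transport for the Gauss--Manin connection, and the integration pairing $H_1\times H^1\to\bR$ is flat; consequently, for a homology class together with the Forni fiber, the property ``the class annihilates the Forni fiber'' is preserved by parallel transport along paths in the connected variety $\cM$. Given $\gamma\in\cB'$, choose $M_\gamma\in\cM$ on which $\gamma$ (transported along a path in $\cM$ via the flat structure on $H_1$) is the core curve of a cylinder; then Lemma~\ref{AulForniLem} applied at $M_\gamma$ shows that $\gamma$ annihilates $F(M_\gamma)$, and transporting back, $\gamma$ annihilates $F$. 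This is the delicate step: one must be sure that the class $\gamma\in\cB'\subseteq H_1(X;\bR)$ genuinely transports to a cylinder core curve on $M_\gamma$, and that both $\mathcal F$ and the $H_1$--$H^1$ pairing are respected by this transport, so that the pointwise conclusion of Lemma~\ref{AulForniLem} at $M_\gamma$ descends to $(X,\omega)$. Granting this, $V:=\operatorname{span}_{\bR}\cB'\subseteq F^{\perp}$, i.e. $F\subseteq V^{\perp}$.

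Finally, the dimension count. Put $U:=\operatorname{span}\{a_1,\dots,a_s,b_1,\dots,b_s\}$ and $W:=\operatorname{span}\{b_{s+1},\dots,b_r\}$ (with $s=0$, $U=0$ in the second bullet), so that $V=U\oplus W$, where $U$ is a $2s$-dimensional symplectic subspace, $W$ is isotropic of dimension $r-s$, and $U$ is symplectically orthogonal to $W$ by the choice of $\cB$. From $U\subseteq F^{\perp}$ we get $F\subseteq U^{\perp}$, a symplectic space of dimension $2(g-s)$; and since $W\subseteq F^{\perp}\cap U^{\perp}$, the subspace $W$ is an isotropic subspace of $U^{\perp}$ contained in the symplectic orthogonal of $F$ computed inside $U^{\perp}$. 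As $F$ is symplectic, that orthogonal is a symplectic subspace of $U^{\perp}$ of dimension $2(g-s)-\dim F$, and a symplectic vector space has no isotropic subspace of more than half its dimension; therefore $r-s\le (g-s)-\tfrac12\dim F$, that is, $\dim F\le 2(g-r)$, as claimed. The remaining linear algebra is routine; its only input beyond ``$F$ is symplectic'' is the symplectic orthogonality of the isotropic part $W$ of $\operatorname{span}\cB'$ and its symplectic part $U$, which is part of the structure of the adapted basis $\cB$.
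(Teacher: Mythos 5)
Your proof is correct and takes essentially the same route as the paper's: both rest on Lemma~\ref{AulForniLem} together with the fact that the Forni subspace is symplectic, followed by a dimension count on its symplectic orthogonal. The paper's version compresses the linear algebra to the single observation that the smallest symplectic subspace containing $\cB'$ has dimension at least $2r$, whereas you unpack it into the $U\oplus W$ decomposition; you also spell out the parallel-transport step needed to invoke Lemma~\ref{AulForniLem} at the various $M_\gamma$, which the paper leaves implicit (it is justified by the flatness of the Forni subbundle from \cite{AvilaEskinMollerForniBundle}).
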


\begin{proof}
The Forni subspace is a symplectic subspace of $H^1(X, \bR)$ by \cite{AvilaEskinMollerForniBundle}.  Furthermore, there is a decomposition of the bundle $H^1(X, \bR)$ into the Forni bundle and its symplectic complement, which coincides with its Hodge complement.  In both cases for $\cB'$ above, the smallest symplectic subspace containing $\cB'$ has dimension at least $2r$.  Since every element of $F(X, \omega)$ evaluates to zero on $\cB'$ by Lemma~\ref{AulForniLem}, and $F(X,\omega)$ is symplectic, $\dim F(X,\omega) \leq 2g - 2r$.
\end{proof}

\subsection{The Forni and Hodge Bundles}

We now use the setup from \Cref{sec:degeneration}. 
Let $(X', \omega')$ be the cylinder pinch of $(X, \omega)$ along the family $(X_t, \omega_t)$, and let $\cB$ be a symplectic basis adapted to the cylinder pinch $(X', \omega')$ along the family $(X_t, \omega_t)$.
Let $\{\Theta_1(t),\ldots,\Theta_g(t)\}$ be an $A$-normalized basis of differentials with respect to $\cB$.

\begin{proposition}
\label{ForniCanonicalBasisProp}
Let $(X,\omega)$ be a horizontally periodic translation surface and $(X',\omega')$ be the cylinder pinch of $(X, \omega)$ along the family $(X_t, \omega_t)$ with geometric genus $g'$.
Let   
$$\cB = \{\alpha_1, \beta_1, \ldots, \alpha_{g'}, \beta_{g'}, \alpha_{g'+1}, \beta_{g'+1}, \ldots, \alpha_g, \beta_g\}$$ be a symplectic basis on $X_t$ adapted to the \CylP and $\{\Theta_1,\ldots,\Theta_g\}$ an $A$-normalized basis on $X'$.
Then  
 \[\hF[(X_t, \omega_t)]\subseteq \langle \Theta_1(t),\ldots,\Theta_{g'}(t)\rangle.
 \]
In particular, $\dim \hF[(X_t, \omega_t)] \leq g'$. Furthermore, in the case of equality,  $\dim \hF[(X_t, \omega_t)]=g'$, the following is true:
\begin{enumerate}
\item $\hF[(X_t, \omega_t)]= \langle \Theta_1(t),\ldots,\Theta_{g'}(t)\rangle$.
\item Let $(\Pi_{ij}(t))$ be the period matrix of $X_t$ normalized against the basis $\cB$. Then its derivative along the Teichm\"uller geodesic flow has the form
$$\frac{d\Pi(t)}{dt} = \left( \begin{array}{cc}0 & 0 \\ 0 & A(t) \\ \end{array} \right),$$
where $A(t)$ is the derivative of the $(g-g') \times (g-g')$-minor of the period matrix on $(X_t, \omega_t)$ restricted to $\{ \alpha_{g'+1}, \beta_{g'+1}, \ldots, \alpha_g, \beta_g\}$.
\item Each differential, $\Theta_i$ for $i\leq g'$ is supported on exactly one irreducible component of $X'$.
\end{enumerate}
\end{proposition}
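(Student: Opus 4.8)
The plan is to derive the inclusion from Lemma~\ref{AulForniLem} and the $A$-normalization of $\{\Theta_i(t)\}$, and then to read off (1)--(3) from the dimension equality, using \eqref{eq:ForniAnn} and the variational interpretation of the Forni $B$-form for part (2).

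First I would prove $\hF[(X_t,\omega_t)]\subseteq\langle\Theta_1(t),\dots,\Theta_{g'}(t)\rangle$. If $\hF[(X_t,\omega_t)]=0$ the inclusion is trivial, so we may assume it is non-zero, whence $\cM$ has non-trivial Forni subspace and Lemma~\ref{AulForniLem} applies. Fix $\Theta\in\hF[(X_t,\omega_t)]$ and write $\Theta=\sum_{i=1}^{g}c_i\Theta_i(t)$. For each $j$ with $g'<j\leq g$, Condition~1 of an adapted basis makes $\alpha_j$ a core curve of a horizontal cylinder of $X$, hence of $X_t=g_t\cdot(X,\omega)$; so Lemma~\ref{AulForniLem} gives $\int_{\alpha_j}\Theta=0$, and since $\int_{\alpha_j}\Theta_i(t)=\delta_{ij}$ this forces $c_j=0$. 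Hence $\Theta\in\langle\Theta_1(t),\dots,\Theta_{g'}(t)\rangle$; taking dimensions gives $\dim\hF[(X_t,\omega_t)]\leq g'$.

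Now assume $\dim\hF[(X_t,\omega_t)]=g'$. Statement (1) is immediate: a $g'$-dimensional subspace of the $g'$-dimensional span $\langle\Theta_1(t),\dots,\Theta_{g'}(t)\rangle$ must coincide with it. For (2), (1) gives that each $\Theta_i(t)$ with $i\leq g'$ lies in $\hF[(X_t,\omega_t)]$, hence in $\Ann B_{x_t}$ by \eqref{eq:ForniAnn}, so $B_{x_t}(\Theta_i(t),\Theta_j(t))=0$ for all $j$ whenever $i\leq g'$. By Remark~\ref{rem:LyapunovZero} and \cite[Lem.~2.2]{ForniMatheusZorichLyapSpectHodge}, the entries of $\tfrac{d\Pi}{dt}$ are, up to a universal non-zero constant, precisely the values $B_{x_t}(\Theta_i(t),\Theta_j(t))$; hence $\tfrac{d\Pi_{ij}}{dt}=0$ for $i\leq g'$, and, since $\Pi(t)$ is symmetric, also for $j\leq g'$. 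This is the block form claimed, with the surviving lower-right block being by definition the derivative of the $(g-g')\times(g-g')$ minor on $\{\alpha_{g'+1},\beta_{g'+1},\dots,\alpha_g,\beta_g\}$, i.e.\ $A(t)$.

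Finally, (3) is built into the construction of the $A$-normalized basis on $X'$ recalled in Section~\ref{AsymptANormDiffsSect}: for $i\leq g'$, Conditions~2 and 3 of an adapted basis place $\alpha_i$ on a single positive-genus irreducible component $X_v$ of $X'$, and $\Theta_i$ is by construction the holomorphic differential on $X_v$ dual to $\alpha_i$ inside $X_v$ extended by zero to the other components, so it is supported on exactly one component. I do not expect any single step to be the real obstacle, since all the tools are in place; the point requiring care is the bookkeeping among $F(x)$, $\hF[(x)]$, and the Hodge bundle, and especially the passage in (2) from ``$\Theta_i(t)\in\Ann B_{x_t}$'' to ``the $i$-th row and column of $\tfrac{d\Pi}{dt}$ vanish,'' where one uses both that the $\Theta_i(t)$ are exactly the $A$-normalized differentials against which $\Pi(t)$ and its derivative are computed and that $\Pi(t)$ is symmetric.
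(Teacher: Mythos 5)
Your proof is correct and takes essentially the same approach as the paper: the inclusion via Lemma~\ref{AulForniLem} and $A$-normalization (the paper phrases this by saying each basis element of $\hF[(X_t,\omega_t)]$ integrates to zero on $\alpha_j$ for $j>g'$, while you expand a single element in the $\Theta_i(t)$ basis and extract coefficients, but these are the same argument); part (1) by dimension count; part (2) by combining \eqref{eq:ForniAnn} with Rauch's variational formula from \cite[Lem.~2.2]{ForniMatheusZorichLyapSpectHodge}; and part (3) from the construction of the $A$-normalized basis in Section~\ref{AsymptANormDiffsSect}, which the paper simply cites rather than reproving. Your handling of the symmetry of $\Pi(t)$ in (2) is in fact slightly cleaner than the paper's wording, which contains a sign slip in the condition on $i,j$.
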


Item 3 in this proposition was already noted above in Section~\ref{AsymptANormDiffsSect}, and we include it in this proposition explicitly for future reference.

\begin{proof}
Let $\{\eta_1, \ldots, \eta_{d}\}$ be a basis of $\hF[(X_t, \omega_t)]$, where $d:=\dim \hF[(X_t, \omega_t)]$. By \Cref{AulForniLem}, $\eta_i$ evaluates to 
zero on core curves of cylinders of $(X_t, \omega_t)$, which implies that for all $j > g'$, $\int_{\alpha_j}\eta_i=0$.  By the definition of an $A$-normalized basis, this implies that $\hF[(X_t, \omega_t)]$ is contained in the span of $\{\Theta_i | i \leq g'\}$. In the case $d=g'$, we have equality.

We assume $d = g'$ for the remainder of the proof. By \Cref{eq:ForniAnn} the holomorphic Forni subspace and therefore $\Theta_i$, for all $i\leq g'$, annihilates the $B$-form. 
It now follows from Rauch's variational formula along the geodesic flow \cite[Lem.~2.2]{ForniMatheusZorichLyapSpectHodge} that
the derivative of the period matrix $
\frac{d\Pi(t)}{dt}$ of the normalized basis ${\Theta_i}$ satisfies
\[
\frac{d\Pi}{dt}(t) \big\vert_{ij}=\int_{X_t} \Theta_i(t) \Theta_j(t) \frac{\bar \omega_t}{\omega_t} = 0
\]
if $i>g'$ or $j>g'$.
\end{proof}

\begin{proposition}
\label{NewForniCriterion}
Let $(X', \omega')$ be the cylinder pinch of a horizontally periodic translation surface $(X, \omega)$ along the family $(X_t, \omega_t)$ such that $X'$ has geometric genus one.  Let $\beta$ be a simple closed path on  $(X, \omega)$ such that, after applying the cylinder pinch, $\beta$ is supported on the unique elliptic component of $X'$ and that $\beta$ restricts to a path between two distinct nodes on the elliptic component.  If there exists $(Y, \eta)$ in the $\splin$-orbit closure of $(X, \omega)$ such that $\beta$ is realized as a core curve of a cylinder on $(Y, \eta)$, then the orbit closure of $(X, \omega)$ has trivial Forni subspace.
\end{proposition}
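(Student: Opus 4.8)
The plan is to argue by contradiction: assume the $\splin$-orbit closure $\cM$ of $(X,\omega)$ has nontrivial Forni subspace, and deduce that no translation surface in $\cM$ can carry $\beta$ as a core curve of a cylinder. The first step is to pin down the holomorphic Forni subspace along the degenerating family. Since $X'$ has geometric genus $g'=1$, Proposition~\ref{ForniCanonicalBasisProp} gives $\dim_{\bC}\hF[(X_t,\omega_t)]\le 1$; a nontrivial real Forni subspace forces $\dim_{\bC}\hF[(X_t,\omega_t)]\ge 1$, so equality holds and $\hF[(X_t,\omega_t)]=\langle\Theta_1(t)\rangle$, where $\Theta_1$ is the $A$-normalized differential attached to the unique positive genus component $X_v$ of $X'$. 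In particular $\Theta_1$ is supported on the elliptic curve $X_v$, vanishes identically on the rational components, and is therefore holomorphic at every node of $X'$.

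Next I would transfer the vanishing condition from $(Y,\eta)$ to the geodesic $(X_t,\omega_t)$. Since $\beta$ is a core curve of a cylinder on $(Y,\eta)\in\cM$, Lemma~\ref{AulForniLem} applied at $(Y,\eta)$ gives $\int_\beta\Theta=0$ for every $\Theta$ in the holomorphic Forni subspace at $(Y,\eta)$; equivalently, the class $[\beta]$ annihilates the Forni subbundle $\cF$ there. Because $\cF$ is a flat, $\splin$-invariant subbundle of the absolute cohomology bundle over the connected invariant subvariety $\cM$ and $[\beta]$ is locally constant in a flat trivialization, the relation ``$[\beta]$ annihilates $\cF$'' propagates over all of $\cM$, and passing to the $(1,0)$-part gives $\int_\beta\Theta_1(t)=0$ for all finite $t$.

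The last step is to degenerate. Letting $t\to\infty$, i.e.\ $s\to 0$, and using that $\Theta_1$ is holomorphic at all nodes crossed by $\beta$, Proposition~\ref{prop:Asymp} (applied after expressing $\beta$ in the adapted basis, so that the $A$-periods stay constant and the $B$-periods have no logarithmic part) shows $\lim_{s\to 0}\int_\beta\Theta_1(s)=\int_\beta\Theta_1$, the period now taken on $X'$; by the previous step this limit is $0$. On the other hand $\Theta_1$ vanishes off $X_v$, so the only contribution to $\int_\beta\Theta_1$ comes from the arc $\beta\cap X_v$, which by hypothesis joins the preimages of two distinct nodes, hence two distinct points $p_1\ne p_2$ of the elliptic curve $X_v$. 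Since $\Theta_1$ is a nonzero holomorphic differential on a genus one curve, its integral along any arc from $p_1$ to $p_2$ is nonzero (the Abel--Jacobi map is injective, so the image of $p_2-p_1$ in $\mathrm{Jac}(X_v)$ is nonzero). This contradicts $\int_\beta\Theta_1=0$, and hence $\cM$ has trivial Forni subspace.

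The step I expect to be the main obstacle is the transfer in the second paragraph: one must invoke the structure theory of the Forni bundle from \cite{AvilaEskinMollerForniBundle} to know that $\cF$ is an honest flat subbundle over all of $\cM$, so that a vanishing statement valid at the point $(Y,\eta)$ of the closure --- which need not lie on the $\splin$-orbit of $(X,\omega)$ --- can be carried along the Teichm\"uller geodesic that exits toward $X'$. A secondary technical point is the legitimacy of applying Proposition~\ref{prop:Asymp}, stated for basis $B$-cycles, to the closed curve $\beta$, which genuinely crosses the pinched cylinders and whose trace on $X_v$ is only an arc; this is handled by the linear-algebra bookkeeping of writing $[\beta]$ in the adapted symplectic basis, noting that the $A$-periods of $\Theta_1$ are constant and the remaining $B$-periods converge to their values on $X'$.
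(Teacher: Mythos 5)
Your proof is correct and follows essentially the same approach as the paper: reduce to $\dim\hF = 1$ via Proposition~\ref{ForniCanonicalBasisProp}, transfer the vanishing $\int_\beta\Theta_1(t)=0$ along the geodesic, pass to the limit with Proposition~\ref{prop:Asymp}, and contradict Abel--Jacobi injectivity on the elliptic component. You are slightly more careful than the paper in the second step: the paper implicitly takes $(Y,\eta)$ on the geodesic itself, whereas you correctly invoke flatness of the Forni subbundle to propagate the vanishing from an arbitrary $(Y,\eta)$ in the orbit closure, which is what the statement actually requires.
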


\textcolor{black}{
In \cite{AulicinoCompDegKZAIS} the  first author showed that there are only six possible cylinder diagrams for a Teichm{\"u}ller curve with zero Lyapunov exponents in genus three, which we split as Cases 1 to 6.
\Cref{NewForniCriterion} will be used below to rule out Cases 1, 2 and 4 by inspecting the cylinder diagram and finding a suitable homology class $\beta$.}

\begin{proof}
We assume by contradiction that the Forni subspace is positive dimensional.
Let   
$$\cB = \{\alpha_1, \beta_1, \ldots, \alpha_{g'}, \beta_{g'}, \alpha_{g'+1}, \beta_{g'+1}, \ldots, \alpha_g, \beta_g\}$$ be a symplectic basis on $X_t$ adapted to the \CylP and $\{\Theta_1,\ldots,\Theta_g\}$ an $A$-normalized basis on $X'$.
In particular, by \Cref{ForniCanonicalBasisProp}~(3) $\Theta_1$ is a non-zero holomorphic differential on $X'$, which is only supported on the elliptic component $E$. Since $X'$ has geometric genus one, $\hF[(X_t, \omega_t)]$ is a  $1$-dimensional complex space and is spanned by $\Theta_1(t)$, by   \Cref{ForniCanonicalBasisProp}~(1).      If $\beta$ can be realized as the core curve of a cylinder on $(X_{t_0}, \omega_{t_0})$ for some $t_0$, then $\int_\beta \Theta(t) = 0$ for all $t$ by Lemma~\ref{AulForniLem}.

By Proposition~\ref{prop:Asymp}, $\Theta_1$ is the limit of $\Theta_1(t)$ as $t$ tends to infinity. Together both facts imply that $\int_{\beta} \Theta_1=0$.
We claim that this is impossible, which will yield a contradiction.  Since $\Theta_1$ is only supported on the elliptic component $E$, the only contribution to the period comes from the restriction of $\beta$ to $E$.   By assumption, $\beta$ restricts to a path between two distinct nodes on $E$.  However, the integral of a non-zero holomorphic differential on an elliptic curve between two distinct points is never zero, so the result follows.
\end{proof}

\section{Proof of the Main Theorem}
\label{MainThmSect}

We begin by giving the proof of Theorem~\ref{MainThm} using the technical results that will be proved below.  Some of the technical results in the following sections assume that the translation surface either lies in a Teichm\"uller curve or in the principal stratum.  As mentioned in the introduction, in Appendix~\ref{GeneralizationAppendix}, we explain how to generalize these results to other orbit closures and strata in genus three, which yields a self-contained proof of Theorem~\ref{Gen3Class}.  Alternatively, Theorem~\ref{Gen3Class} can be proven by combining Theorem~\ref{MainThm} with \cite[Prop.~4.5]{BainbridgeHabeggerMollerHNFilt} and \cite{AulicinoZeroExpGen3}.

Since every orbit closure admits a horizontally periodic translation surface, it suffices to analyze every possible horizontally periodic translation surface admissible under the Forni Geometric Criterion \cite{ForniCriterion}, and these are listed in Table~\ref{MainConfigTable} and proven in \cite[Lem.~4.1]{AulicinoZeroExpGen3}.  

\begin{theorem}
\label{MainThm}
Let $\cM$ be a Teichm\"uller curve in $\cH(1^4)$ with at least one zero Lyapunov exponent in its Kontsevich-Zorich spectrum.  Then $\cM$ is generated by the Eierlegende Wollmilchsau.
\end{theorem}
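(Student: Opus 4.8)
The plan is to reduce Theorem~\ref{MainThm} to a finite case analysis driven by the six cylinder diagrams of \cite{AulicinoCompDegKZAIS}, and to dispatch each case using the criteria assembled in Section~\ref{ForniSubspaceSection} together with the jump problem of \Cref{sec:Asymp}. First I would invoke \cite[Prop.~1.1]{AulicinoZeroExpGen3} (and \cite{FilipZeroExps}) so that a zero Lyapunov exponent forces a nontrivial Forni subspace; then the Forni Geometric Criterion \cite{ForniCriterion} and \cite[Lem.~4.1]{AulicinoZeroExpGen3} restrict the possible horizontally periodic representatives to the list in Table~\ref{MainConfigTable}, organized into Cases 1 through 6. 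Since $\cM$ is a Teichm\"uller curve, every direction is completely periodic (Veech), the horizontal cylinders have commensurable moduli, and the cylinder pinch $(X',\omega')$ is available as a degeneration to which Proposition~\ref{prop:Asymp} and Lemma~\ref{lemma:lowestOrder} apply.

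Next I would go through the cases. For Cases~1, 2, and~4 the plan is to observe that the associated cylinder pinch has geometric genus one, exhibit in the cylinder diagram a simple closed curve $\beta$ that, after pinching, lies on the elliptic component and joins two distinct nodes, and check that $\beta$ is a core curve of some cylinder elsewhere in the orbit closure (e.g.\ in a transverse periodic direction); then Proposition~\ref{NewForniCriterion} immediately contradicts the nontriviality of the Forni subspace, eliminating these cases. For Case~3 the plan is to pass to the degenerate surface of geometric genus one, use the $\bP^1$-coordinate system of Section~\ref{sec:Coord} so that Lemma~\ref{lemma:noContr} kills all ``in-and-out'' contributions through the genus-zero components, and solve the jump problem explicitly: the resulting constraint on $d\Pi/dt$ via Proposition~\ref{ForniCanonicalBasisProp}(2) together with the known rank-one form of the derivative on a Teichm\"uller curve forces a contradiction. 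Case~5 is to be excluded by a similar jump-problem computation or directly by the Forni criteria of Lemma~\ref{AulForniLem}/Proposition~\ref{ForniSubspCrit} applied to its cylinders.

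This leaves Case~6 as the substantive case. Here the plan is: first use the jump problem (Proposition~\ref{prop:Asymp} with the vanishing of an appropriate logarithmic or lowest-order term) to conclude that the two horizontal cylinders are homologous; Lemma~\ref{lemma:recylinder} then forces them to have equal circumference and equal height. With this rigidity in hand, one runs the flat-geometric analysis in the spirit of \cite[$\S$5]{AulicinoNortonST5}: the completely periodic structure in the horizontal and a transverse direction, the equal-modulus constraint, and the requirement that the Forni subspace survive pin down the combinatorial and metric data uniquely, and the surface is recognized as the Eierlegende Wollmilchsau (Figure~\ref{Wollmilchsau}). Finally, since the Wollmilchsau is a square-tiled surface generating a Teichm\"uller curve with a zero exponent, it does occur, so $\cM$ is exactly that curve.

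The hard part will be Case~6: extracting the full rigidity of the flat structure from the homology relation and the equal-modulus condition without computer assistance. The delicate points are verifying that the jump-problem term one needs to vanish really does force homologous cylinders (as opposed to a weaker relation), and then carrying the cylinder-decomposition bookkeeping in two transverse periodic directions far enough to isolate the Wollmilchsau among all combinatorial possibilities compatible with a nontrivial Forni subspace. I would expect the bulk of Sections~\ref{Case124Sect} through~\ref{Case6Sect} to be devoted precisely to this, with the genus-one degeneration and the $\bP^1$ Cauchy kernel doing the heavy lifting in the easier cases.
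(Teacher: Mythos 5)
Your overall structure matches the paper: reduce to the six degenerations of Table~\ref{MainConfigTable}, dispatch Cases~1, 2, 4 with \Cref{NewForniCriterion}, dispatch Case~3 with the jump problem on $\bP^1$ (using \Cref{lemma:noContr}), and use the jump problem plus flat geometry in Case~6. But there is a genuine gap in your treatment of Case~5. You propose ``a similar jump-problem computation or directly by the Forni criteria of Lemma~\ref{AulForniLem}/Proposition~\ref{ForniSubspCrit}.'' Neither works as stated. In Case~5 the cylinder pinch has geometric genus~$2$, so \Cref{ForniCanonicalBasisProp} only gives $\dim \hF \leq 2$, which is no obstruction to a $1$-dimensional Forni subspace; and \Cref{AulForniLem}/\Cref{ForniSubspCrit} annihilate only one core curve, which again kills at most a $2$-dimensional symplectic piece -- still compatible with $\dim F = 2$. (Indeed, the paper explicitly notes \Cref{ForniSubspCrit} is not used.) The paper's argument for Case~5 is indirect: a Case~5 surface always has a transverse simple cylinder, and complete periodicity (Veech) forces that transverse direction to decompose into cylinders realizing one of the six cases; but simple cylinders can occur only in Cases~1--4, all of which have already been excluded. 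This bootstrapping via transverse periodic directions is essential and is not what you are proposing.

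Two smaller inaccuracies are worth correcting. In Case~6 you say the jump problem shows ``the two horizontal cylinders are homologous'' and that \Cref{lemma:recylinder} then gives equal circumference and height; this inverts the logic. Homology of the two core curves is automatic from the dual graph (they bound an elliptic component), and this alone already gives equal circumference. What the jump problem actually produces (\Cref{Case6EqMod}) is that $r_{e_1}=r_{e_2}$, i.e.\ the cylinders have \emph{equal moduli} -- this is the hypothesis \Cref{lemma:recylinder} needs, not its conclusion -- and combined with the structural homology one gets equal heights. Only then can the flat-geometric pinch-to-Wollmilchsau argument (à la \cite[$\S5$]{AulicinoNortonST5}) run, and it runs without computer search. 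Finally, in Case~4 note that exhibiting the curve $\beta$ needed for \Cref{NewForniCriterion} is not immediate for Cylinder Diagram~4A: it requires the Lebesgue-measure argument of \Cref{CylExists}, which you should not gloss over.
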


\begin{proof}
 By \cite[Prop.~1.1]{AulicinoZeroExpGen3}, a zero Lyapunov exponent for a genus three translation surface can only arise from a non-trivial Forni subspace.

Since Teichm\"uller curves are generated by Veech surfaces, which are completely periodic, we consider the dual graphs that are admissible on a Veech surface with a zero Lyapunov exponent resulting from applying a cylinder pinch to a horizontally periodic translation surface.  The dual graphs were classified in \cite[Lem.~4.1]{AulicinoZeroExpGen3}: if $(X, \omega)$ is a periodic surface in a Teichm\"uller curve in $\cH(1^4)$ with zero Lyapunov exponents, then the dual graph of a cylinder pinch of $(X, \omega)$ must have one of six forms listed in Table~\ref{MainConfigTable}.

Proposition~\ref{ZeroImpCase6} proves that the cylinder diagram of a Veech surface generating a Teichm\"uller curve with a zero Lyapunov exponent must satisfy Case 6.  Proposition~\ref{Case6ImpEW} establishes that any such surface must be on the Teichm\"uller curve of the Eierlegende Wollmilchsau.
\end{proof}

Table~\ref{MainConfigTable} is a modified version of \cite[Table~1]{AulicinoZeroExpGen3}, where the second column contains the dual graph of the degenerate surface.  A key fact is that the edge of a dual graph of a cylinder pinch $(X', \omega')$ corresponds to a node, and $\omega'$ has a pair of poles at the node.  Therefore, each edge will correspond to a cylinder on a surface before a cylinder pinch is applied.

\begin{table}
$$\begin{array}{c|c}
\text{Case} & \text{Dual Graph}\\
\hline
1) & \begin{minipage}[c]{0.24\linewidth}
\centering
  \begin{tikzpicture}[scale=.5]
    \node[circle, fill=black, label=below:$1$] at (0,0) {};
    \draw [-, scale = 3] (0,0) to [out=-45,in=45, loop] (0,0);
    \draw [-, scale = 3] (0,0) to [out=135,in=225, loop] (0,0);
     \end{tikzpicture}
\end{minipage} \\
2) & \begin{minipage}[c]{0.24\linewidth}
\centering
  \begin{tikzpicture}[scale=.5]
    \node[circle, fill=black, label=below:$0$] at (0,0) {};
    \node[circle, fill=black, label=below:$1$] at (3,0) {};
    \draw [-] (0,0) to [out=45,in=135] (3,0);
    \draw [-] (0,0) to [out=0,in=180] (3,0);
    \draw [-] (0,0) to [out=-45,in=225] (3,0);
     \end{tikzpicture}
\end{minipage}  \\
3) & \begin{minipage}[c]{0.24\linewidth}
\centering
  \begin{tikzpicture}[scale=.5]
    \node[circle, fill=black, label=below:$0$] at (0,0) {};
    \node[circle, fill=black, label=below:$1$] at (2,0) {};
    \draw [-, scale = 3] (0,0) to [out=135,in=225, loop] (0,0);
    \draw [-] (0,0) to [out=45,in=135] (2,0);
    \draw [-] (0,0) to [out=-45,in=225] (2,0);
     \end{tikzpicture}
\end{minipage} \\
4) & \begin{minipage}[c]{0.24\linewidth}
\centering
  \begin{tikzpicture}[scale=.5]
    \node[circle, fill=black, label=below left:$0$] at (0,0) {};
    \node[circle, fill=black, label=below right:$0$] at (2,0) {};
    \node[circle, fill=black, label=below:$1$] at (1,-1.3) {};
    \draw [-] (0,0) to [out=45,in=135] (2,0);
    \draw [-] (0,0) to [out=-45,in=225] (2,0);
    \draw [-] (0,0) to [out=-60,in=135] (1,-1.3);
    \draw [-] (2,0) to [out=-120,in=45] (1,-1.3);
     \end{tikzpicture}
\end{minipage} \\
5) & \begin{minipage}[c]{0.24\linewidth}
\centering
  \begin{tikzpicture}[scale=.5]
    \node[circle, fill=black, label=below:$2$] at (0,0) {};
    \draw [-, scale = 3] (0,0) to [out=-45,in=45, loop] (0,0);
     \end{tikzpicture}
\end{minipage} \\
6) & \begin{minipage}[c]{0.24\linewidth}
\centering
  \begin{tikzpicture}[scale=.5]
    \node[circle, fill=black, label=below:$1$] at (0,0) {};
    \node[circle, fill=black, label=below:$1$] at (3,0) {};
    \draw [-] (0,0) to [out=45,in=135] (3,0);
    \draw [-] (0,0) to [out=-45,in=225] (3,0);
     \end{tikzpicture}
\end{minipage}  \\
\end{array}$$
\caption{A complete list of dual graphs for surfaces that permit a zero Lyapunov exponent in genus three.}
\label{MainConfigTable}
\end{table}

We will adopt the following convention.  When we say that a translation surface satisfies one of the cases, then it will be implicit that the surface is horizontally periodic and satisfies that case.

\begin{definition}
A cylinder is \emph{simple} if each of its boundaries consists of a single saddle connection.
\end{definition}

In order to prove Theorem~\ref{MainThm}, we prove that no translation surface generating a Teichm\"uller curve with a zero Lyapunov exponent can decompose into cylinders satisfying any of Cases 1 through 5.  The exclusion of each of these cases is the subject of the following sections.  In the Appendix~\ref{GeneralizationAppendix}, we also treat the case of invariant subvarieties in genus three that are not Teichm\"uller curves.  Hence, we will state the following result in a more general context.

\begin{proposition}
\label{ZeroImpCase6}
Let $(X, \omega) \in \cH(1^4)$ be completely periodic with a non-trivial Forni subspace.  Then any decomposition of $(X, \omega)$ into cylinders satisfies Case 6.
\end{proposition}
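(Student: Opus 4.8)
The plan is to combine the classification of degenerations from \cite{AulicinoZeroExpGen3} with the two criteria developed above. Since $(X,\omega)$ is completely periodic with non-trivial Forni subspace, fix a cylinder decomposition and let $(X',\omega')$ be the corresponding cylinder pinch. By \cite[Lem.~4.1]{AulicinoZeroExpGen3} the dual graph of $(X',\omega')$ is one of the six graphs in Table~\ref{MainConfigTable}, so it suffices to rule out Cases 1--5. (When the surface is not a Teichm\"uller curve one first deforms to pairwise commensurable moduli via \cite{WrightCylDef}, as noted in \Cref{sec:Asymp}, so that \Cref{prop:Asymp} is available; this does not change the dual graph.)

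For Cases 1, 2 and 4 the geometric genus is $g'=1$, so by \Cref{ForniCanonicalBasisProp} the holomorphic Forni subspace is one--dimensional and spanned by a differential $\Theta_1(t)$ limiting to a nonzero holomorphic differential on the unique elliptic component $E$ of $X'$. In each of these cylinder diagrams $E$ carries at least two distinct nodes (Case 1: two self-loops at the elliptic vertex; Case 2: three edges to the rational vertex; Case 4: one edge to each of the two rational vertices), and by inspecting the diagram from \cite{AulicinoZeroExpGen3} one exhibits a simple closed curve $\beta$ on $(X,\omega)$ that, after the horizontal cylinder pinch, becomes an arc on $E$ joining two of these nodes. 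Because $(X,\omega)$ is completely periodic, $\beta$ may be chosen to be a regular closed geodesic in a periodic direction, hence (after rotating that direction to horizontal) the core curve of a cylinder on a surface in the $\splin$-orbit closure of $(X,\omega)$. Then \Cref{NewForniCriterion} forces the Forni subspace to be trivial, a contradiction. The real work in these three cases is the combinatorial-to-flat translation: extracting the correct $\beta$ from each cylinder diagram and certifying that it is realized as a cylinder core curve somewhere in the orbit closure.

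Case 3 again has $g'=1$, but the elliptic component is attached to a rational component carrying a self-node, and the natural candidate curve now has to be evaluated across that rational component, so \Cref{NewForniCriterion} is not directly available. Instead I would run the refined jump-problem analysis of \Cref{sec:Asymp} directly on this nodal surface: work in the $\bP^1$-coordinates of \Cref{sec:Coord}, use \Cref{lemma:noContr} to discard the paths that bounce off the rational component, and apply \Cref{prop:Asymp} together with \Cref{lemma:lowestOrder} to compute the relevant $B$-period of $\Theta_1$ --- equivalently, an off-diagonal entry of $\tfrac{d\Pi}{dt}$ --- and show that it is nonzero. This contradicts \Cref{AulForniLem} and the zero-row structure of the Forni $B$-matrix recorded in \Cref{rem:LyapunovZero} and \Cref{ForniCanonicalBasisProp}(2).

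Finally, in Case 5 the cylinder pinch has geometric genus $g'=2$, a single genus-two component $E$ with one self-node. If the Forni subspace were maximal (real dimension $4$, i.e.\ $\lambda_2=\lambda_3=0$) the surface would be covered by the classification of the maximal case \cite{MollerShimuraTeich, AulicinoCompDegKZ, AulicinoCompDegKZAIS}, whose only genus-three representative is the Eierlegende Wollmilchsau, which is Case 6 --- contradiction. Otherwise the one-dimensional holomorphic Forni subspace limits to a nonzero holomorphic differential $\eta'$ on $E$ which, by \Cref{AulForniLem}, must annihilate the Forni $B$-form on $E$ and vanish on the core curve of every cylinder transverse to the horizontal direction; complete periodicity provides enough such cylinders for the limiting core curves to span $H_1(E;\bR)$, forcing $\eta'=0$ --- a contradiction. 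The step I expect to be the main obstacle is precisely the explicit combinatorial work in Cases 1--4: producing the curve $\beta$ from each cylinder diagram and verifying its realization as a cylinder core curve for Cases 1, 2, 4, and carrying out the $\bP^1$-Cauchy-kernel jump-problem bookkeeping for Case 3.
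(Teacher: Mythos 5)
Your treatment of Cases~1--4 matches the paper's strategy exactly: \Cref{NewForniCriterion} for Cases~1, 2, 4 (after producing an explicit $\beta$), and a direct jump-problem computation for Case~3 via \Cref{lemma:noContr} and \Cref{lemma:lowestOrder}. The substantive deviation is in Case~5, and that is where a genuine gap appears.

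In Case~5, your non-maximal subcase hinges on the claim that ``complete periodicity provides enough such cylinders for the limiting core curves to span $H_1(E;\bR)$, forcing $\eta'=0$.'' This is asserted, not proven, and it is not clear it can be established without substantial extra work: a priori there is no reason the core curves of transverse cylinders, pushed to the nodal limit $E$, should span the full homology of $E$, nor is it automatic that the one-dimensional Forni subbundle has a well-behaved nonzero limit $\eta'$ inside $\langle\Theta_1,\Theta_2\rangle$ as $t\to\infty$ (the coefficients in that span could a priori degenerate). Your maximal subcase also leans on an unverified auxiliary fact --- that the Eierlegende Wollmilchsau admits no one-cylinder (Case~5) direction --- and on a circular-looking appeal to the conclusion you are trying to reach. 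The paper bypasses all of this with a short combinatorial observation: every Case~5 surface has a transverse direction containing a \emph{simple} cylinder (this is \cite[Lem.~4.1]{AulicinoNortonST5}). By complete periodicity that transverse direction yields a full cylinder decomposition whose cylinder pinch again falls under Table~\ref{MainConfigTable}; since neither cylinder in Case~6 is simple and Case~5's single cylinder is not simple, the transverse direction must realize one of Cases~1--4, which have already been ruled out. This reduces Case~5 to the cases you have already handled and avoids any limiting or spanning argument. I would replace your Case~5 paragraph with this reduction; the rest of your proposal then agrees with the paper.
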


\begin{proof}
By \cite[Lem.~4.1]{AulicinoZeroExpGen3}, applying a cylinder pinch to a periodic translation surface with non-trivial Forni subspace results in a nodal surface with dual graph listed in Table~\ref{MainConfigTable}.  By Propositions~\ref{Case1Excluded}, \ref{Case2Excluded}, \ref{Case3Exc} and \ref{Case4AExc}, no cylinder decomposition of $(X, \omega)$ can satisfy Cases 1, 2, 3, or 4, respectively.  It is easy to see that Case 5 always has a transverse direction with a simple cylinder (see \cite[Lem. 4.1]{AulicinoNortonST5}).  Such a surface must decompose into cylinders by complete periodicity.  Observe that neither of the cylinders in Case 6 are simple, that Cases 1 through 4 are the only cases where a cylinder can be simple, and none of them are possible if $(X, \omega)$ has non-trivial Forni subspace.  It follows that $(X, \omega)$ cannot decompose into cylinders satisfying any of the Cases 1 through 5.
\end{proof}

\section{Cases 1, 2 and 4}
\label{Case124Sect}

\subsection{Case 1}
\label{Case1Sect}

\begin{proposition}
\label{Case1Excluded}
If a genus three translation surface satisfies Case 1, then its  orbit closure has trivial Forni subspace.
\end{proposition}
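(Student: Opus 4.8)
The plan is to apply Proposition~\ref{NewForniCriterion} directly. In Case 1 the dual graph consists of a single vertex of genus one with two self-loops, so a cylinder pinch of a genus three surface $(X,\omega)$ satisfying this case produces a nodal surface $X'$ whose unique irreducible component is an elliptic curve $E$, with two nodes $e_1, e_2$ obtained by pinching the core curves of two horizontal cylinders $C_1, C_2$ on $(X,\omega)$. Each node has two preimages on $E$; call them $q_1^\pm$ and $q_2^\pm$. Since $X'$ has geometric genus one, Proposition~\ref{NewForniCriterion} applies as soon as we produce a simple closed curve $\beta$ on $(X,\omega)$ that (i) after pinching is supported on $E$, (ii) restricts on $E$ to a path between two distinct nodes, and (iii) is realized as the core curve of a cylinder on some surface in the $\splin$-orbit closure of $(X,\omega)$. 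The natural candidate for (iii) is $\beta$ itself being a core curve of a cylinder already on $(X,\omega)$, in a transverse direction; for that we can invoke complete periodicity if the surface lies on a Teichm\"uller curve (every Veech surface is completely periodic by Veech), and more generally use the cylinder deformation machinery of \cite{WrightCylDef} as in the Appendix. But the essential content is purely combinatorial: we must locate such a $\beta$ in the cylinder diagram of Case 1.

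The key step is therefore an explicit inspection of the cylinder diagram(s) underlying Case 1 from \cite[Lem.~4.1]{AulicinoZeroExpGen3}. Here $(X,\omega) \in \cH(1^4)$ decomposes horizontally into cylinders whose core curves, after pinching, yield the two loops at the genus-one vertex. I would identify a saddle connection, or concatenation of saddle connections, forming a simple closed curve $\beta$ that passes through the region of each cylinder so that when the cylinders are collapsed, $\beta$ becomes an arc on $E$ running from (a preimage of) the node $e_1$ to (a preimage of) the node $e_2$ — these are distinct nodes, so condition (ii) holds. Concretely, because the genus-one component is obtained by collapsing \emph{all} horizontal cylinders, $\beta$ should be chosen as a vertical (or otherwise transverse) saddle connection joining singularities that lie on the boundaries of the two distinct cylinders $C_1$ and $C_2$; such a $\beta$ is transverse to the horizontal direction, hence survives the pinch as an arc on $E$ between the two distinct nodes. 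The existence of such a transverse saddle connection follows from the combinatorics of the cylinder diagram: in Case 1 there are four simple zeros distributed on the boundaries of the cylinders, and one traces through the identifications to exhibit $\beta$. One then checks that the transverse direction containing $\beta$ is periodic (complete periodicity) and that $\beta$ is in fact a core curve of a cylinder in that direction, establishing (iii); if $\beta$ is a saddle connection rather than a core curve, one applies a small $\splin$-deformation or the cylinder-deformation results of \cite{WrightCylDef} to reach a surface in the orbit closure on which the transverse cylinder through $\beta$ is present.

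With $\beta$ in hand, Proposition~\ref{NewForniCriterion} immediately gives that the orbit closure of $(X,\omega)$ has trivial Forni subspace, which is exactly the claim. I expect the main obstacle to be the bookkeeping in step two: correctly reading off the cylinder diagram of Case 1 from \cite{AulicinoZeroExpGen3} and verifying that the candidate $\beta$ genuinely restricts to an arc between \emph{two distinct} nodes on $E$ (rather than returning to the same node, which is the degenerate situation Lemma~\ref{lemma:noContr} handles and which would \emph{not} yield a contradiction via the elliptic-integral argument). A secondary subtlety is ensuring $\beta$ is \emph{simple} and that the transverse direction is genuinely periodic; both are handled by the completely periodic hypothesis together with a routine check that the transverse direction contains no singular leaf obstruction, but they must be stated carefully so the hypotheses of Proposition~\ref{NewForniCriterion} are met verbatim.
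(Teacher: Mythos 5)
Your instinct to reduce the claim to Proposition~\ref{NewForniCriterion} matches the paper, and you correctly identify the hypotheses that must be verified. However, the curve $\beta$ you propose to construct does not satisfy those hypotheses, and this is a genuine gap rather than a bookkeeping issue.

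You want $\beta$ to cross \emph{both} horizontal cylinders $C_1$ and $C_2$ so that, after the cylinder pinch, it ``runs from a preimage of $e_1$ to a preimage of $e_2$.'' But in Case~1 the dual graph is a single genus-one vertex with two self-loops, so \emph{all four} preimages $q_1^{\pm}, q_2^{\pm}$ lie on the elliptic component $E$. A simple closed curve crossing both cylinders therefore restricts, after pinching, to a \emph{union of two arcs} on $E$ (one from some $q_1^{\cdot}$ to some $q_2^{\cdot}$ and another from $q_2^{\cdot}$ back to $q_1^{\cdot}$), not to a single path between two distinct punctures. Proposition~\ref{NewForniCriterion} requires a single arc: its proof uses that $\int_\beta \Theta_1 = \int_p^q \Theta_1 \neq 0$ for distinct $p, q$ on $E$, whereas a sum of two arc integrals can vanish by cancellation, so the contradiction does not follow. (This is exactly why the phrase ``two distinct nodes'' in the proposition should be read as ``two distinct punctures of the normalization,'' and why $\beta$ must cross \emph{exactly one} node.) Relatedly, your fallback of taking $\beta$ to be a transverse \emph{saddle connection} does not parse: a saddle connection is not a closed curve and cannot be a cylinder core curve, so the shortcut of ``a small $\splin$-deformation'' does not repair it.

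The paper's construction avoids this entirely by producing a $\beta$ crossing only one cylinder: by \cite[Lem.~4.2]{AulicinoZeroExpGen3}, since the core curves of the two horizontal cylinders are \emph{not homologous} in Case~1, there is a saddle connection $\sigma$ appearing on \emph{both} boundaries of one cylinder, say $C_1$. Straight trajectories from $\sigma$ back to itself inside $C_1$ determine a simple transverse cylinder whose core curve $\beta$ crosses only $C_1$. After pinching, $\beta$ restricts to a single arc between the two distinct preimages $q_1^{+}, q_1^{-}$ of the node $e_1$ on $E$, and Proposition~\ref{NewForniCriterion} applies directly. The missing ingredient in your write-up is precisely this non-homologous-core-curves lemma, which both supplies the explicit $\beta$ (no appeal to complete periodicity or deformation is needed) and ensures it crosses one cylinder rather than two.
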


\begin{proof}
Let $(X, \omega)$ be a translation surface satisfying Case 1.  Then $(X,\omega)$ decomposes into two cylinders.  By \cite[Lem. 4.2]{AulicinoZeroExpGen3}, since the core curves of the two cylinders are not homologous, there exists a saddle connection $\sigma$ on both sides of one of the cylinders.  Thus, there are straight-line trajectories from $\sigma$ to itself determining a simple cylinder that only crosses one of the cylinders.  Let $\beta$ denote the core curve of the simple cylinder.

Let $(X', \omega')$ be the cylinder pinch of $(X, \omega)$.  After removing the nodes, $\beta$ is a path between two distinct punctures on an elliptic curve.  By Proposition~\ref{NewForniCriterion}, the orbit closure of $(X, \omega)$ has trivial Forni subspace.
\end{proof}

\subsection{Case 2}

\begin{proposition}
\label{Case2Excluded}
If a genus three translation surface satisfies Case 2, then its  orbit closure has trivial Forni subspace.
\end{proposition}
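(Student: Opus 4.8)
The strategy mirrors the proof of Proposition~\ref{Case1Excluded}: I would reduce to Proposition~\ref{NewForniCriterion} by exhibiting a suitable simple closed curve. Suppose $(X,\omega)$ satisfies Case 2; then it is horizontally periodic and decomposes into three horizontal cylinders $C_1,C_2,C_3$ with core curves $\gamma_1,\gamma_2,\gamma_3$, and its cylinder pinch $(X',\omega')$ has, by Table~\ref{MainConfigTable}, a dual graph consisting of an elliptic component $E$ and a genus zero component joined by three nodes $e_1,e_2,e_3$, with $e_i$ the node obtained by pinching $\gamma_i$. In particular $X'$ has geometric genus one, so Proposition~\ref{NewForniCriterion} applies once the curve is found. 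Since the dual graph is connected with first Betti number two, the vanishing cycles satisfy a single homological relation, which after orienting we take to be $[\gamma_1]+[\gamma_2]+[\gamma_3]=0$; in particular the $[\gamma_i]$ are pairwise distinct. Moreover, if the holomorphic Forni subspace is nontrivial then by Proposition~\ref{ForniCanonicalBasisProp} it is one-dimensional and spanned by an $A$-normalized differential $\Theta_1$ supported only on $E$.

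The plan is to read off from the Case 2 cylinder diagram of \cite[Lem.~4.1]{AulicinoZeroExpGen3}, possibly after applying an element of $\splin$, a cylinder $C'$ transverse to the horizontal foliation whose core curve $\beta$ crosses exactly two of the three horizontal cylinders, say $C_i$ and $C_j$; such a $C'$ should be produced, as in Case 1, from saddle connections occurring twice along $\partial C_i\cup\partial C_j$ in a configuration that permits a band of parallel transverse segments to close up, with the relation $[\gamma_1]+[\gamma_2]+[\gamma_3]=0$ and the ideas behind \cite[Lem.~4.2]{AulicinoZeroExpGen3} used to locate the relevant saddle connections. Granting such a $\beta$, its intersection with the part of the surface that becomes $E$ is a single arc, which under the cylinder pinch becomes a path on $E$ from the preimage of $e_i$ to the preimage of $e_j$ --- two \emph{distinct} punctures since $e_i\neq e_j$ --- while its intersection with the part that becomes the genus zero component contributes nothing to $\int_\beta\Theta_1$ because $\Theta_1$ is supported on $E$. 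Thus $\beta$ satisfies the hypotheses of Proposition~\ref{NewForniCriterion}, and since $C'$ is a cylinder with core curve $\beta$ on a surface lying in the orbit closure, Proposition~\ref{NewForniCriterion} forces the Forni subspace of the orbit closure to be trivial.

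The hard part is this combinatorial step. In Case 1 the single pinched cylinder becomes a self-node of the elliptic component, so the transverse simple cylinder automatically lies over $E$ and connects the two preimages of that node; in Case 2 each node joins $E$ to the genus zero component, so one must check from the explicit diagram that the curve one chooses crosses two distinct horizontal cylinders and that the resulting $E$-arc genuinely runs between two \emph{different} punctures rather than closing up at a single puncture. If a direct construction of $C'$ turns out to be cumbersome, a fallback is to take for $\beta$ a simple closed curve contained in the genus one subsurface $E^{\circ}\subset X$, disjoint from $\gamma_1,\gamma_2,\gamma_3$, whose homology class maps to a nonzero element of $H_1(E;\bZ)$: then $\beta$ crosses no node, so Proposition~\ref{prop:Asymp} gives $\int_{\beta}\Theta_1(t)\to\int_{\beta}\Theta_1$, a nonzero period of the elliptic curve, contradicting Lemma~\ref{AulForniLem} provided $\beta$ is realized as a core curve of a cylinder somewhere in the orbit closure --- which is available for completely periodic surfaces, the setting in which Proposition~\ref{Case2Excluded} is ultimately applied.
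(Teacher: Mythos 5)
Your high-level plan coincides with the paper's --- exhibit a transverse cylinder $C'$ whose core curve $\beta$ crosses exactly two of the three horizontal cylinders, observe that after the cylinder pinch $\beta$ restricts to an arc on the elliptic component between two distinct preimages of nodes, and invoke Proposition~\ref{NewForniCriterion}. But the proposal never actually constructs $C'$; it only states that such a cylinder ``should be produced, as in Case 1,'' using ``the relation $[\gamma_1]+[\gamma_2]+[\gamma_3]=0$ \dots to locate the relevant saddle connections.'' That construction is the entire content of the paper's proof and it is not automatic. The paper uses the fact that there is a unique way (after cut-and-paste and $\splin$) to identify three parallel cylinders with a simple zero between them, places a saddle connection $\sigma$ on the bottom of $C_1$, takes $C_2$ to be the cylinder carrying $\sigma$ on its top, and shears so that a transverse trajectory from a regular point of $\sigma$ on the bottom of $C_1$ closes up on the copy of $\sigma$ on the top of $C_2$; the key estimate making the trajectory close is that the circumference of $C_2$ is at least the length of $\sigma$. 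None of this appears in the proposal, so the central existence claim is a genuine gap, not a routine detail.

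The proposed fallback does not repair this. Choosing an abstract simple closed curve $\beta$ inside $E^{\circ}\subset X$, disjoint from $\gamma_1,\gamma_2,\gamma_3$, with nonzero image in $H_1(E;\bZ)$, does give $\int_\beta\Theta_1\neq 0$; but Lemma~\ref{AulForniLem} only forces $\int_\beta\Theta_1(t)=0$ when $\beta$ is the core curve of a cylinder on some translation surface in the orbit closure. Complete periodicity does not supply this: it says that once a direction contains one closed regular trajectory the whole direction decomposes into cylinders, not that an arbitrary homotopy class chosen inside $E^{\circ}$ is represented by a straight-line geodesic, let alone by a cylinder core curve. The fallback therefore silently assumes the very geometric fact (existence of a suitable transverse cylinder) that the main argument was supposed to establish, and additionally moves outside the stated hypotheses of Proposition~\ref{Case2Excluded}, which does not assume complete periodicity.
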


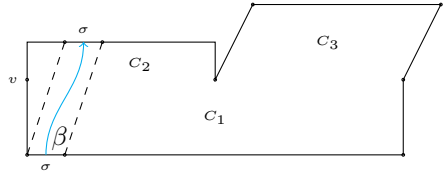
\begin{figure}[ht]
\centering
\begin{minipage}{0.5\linewidth}
\begin{tikzpicture}[scale=0.5]

\draw (0,0)--(0,3)--(5,3)--(5,2)--(6,4)--(11,4)--(10,2)--(10,0)--(2,0)--cycle;
\foreach \x in {(1,3),(2,3),(10,2),(0,2),(5,2),(0,0),(1,0),(10,0)} \draw \x circle (1pt);
\foreach \x in {(6,4),(11,4)} \draw \x circle (1pt);

\draw(5,1) node[] {\tiny $C_1$};
\draw(3,2.4) node[] {\tiny $C_2$};
\draw(8,3) node[] {\tiny $C_3$};

\draw(1.5,3) node[above] {\tiny $\sigma$};


\draw(0.5,0) node[below] {\tiny $\sigma$};
\draw(0,2) node[left] {\tiny $v$};

\draw [dashed] (0,0) -- (1,3);
\draw [dashed] (1,0) -- (2,3);
\draw [->, cyan] (.5,0) to [out=90,in=-90] (1.5,3);
\node [right] at (.4,.45) {$\beta$};
\end{tikzpicture}
\end{minipage}
 \caption{A depiction of a translation surface in Case 2 that shows the existence of a cylinder with core curve $\beta$ supported on an elliptic curve and a sphere after pinching the core curves of every horizontal cylinder.}
 \label{Config2NonSimpLem}
\end{figure}

\begin{proof}
Rotate to the horizontal direction for convenience of the figures.  As explained in description of Configuration~2 in \cite[$\S$4.1]{AulicinoZeroExpGen3}, there is a unique way of identifying three cylinders with parallel core curves so that there is a simple zero between them, e.g., the $3$-cylinder diagram in $\cH(1,1)$.  This identification is depicted between the bottoms of cylinders $C_2$ and $C_3$ and the top of cylinder $C_1$ in Figure~\ref{Config2NonSimpLem}.  We claim that there always exists a closed trajectory transverse to the horizontal direction that crosses $C_1$ and $C_2$ exactly once and determines a cylinder with core curve $\beta$ as depicted in Figure~\ref{Config2NonSimpLem}.  We emphasize that we only use cutting and gluing of the translation surface (which preserves the point in moduli space) and $\splin$ (which preserves its orbit closure) to deform a given $(X, \omega)$ into the arrangement depicted in Figure~\ref{Config2NonSimpLem}.

Draw $C_1$ so that any saddle connection $\sigma$ is located on its bottom as in the figure.  Without loss of generality, let $C_2$ be the cylinder with $\sigma$ on its top.  Shear $(X, \omega)$ by the horocycle flow so that the bottom left-hand corner of $C_2$ is located at the point $v$ in the figure.  Cut and glue the cylinder $C_2$ so that it follows the convention described in Figure~\ref{SCFigConvention}.  Given any regular point on $\sigma$ on the bottom of $C_1$, there is a closed trajectory from that point to its copy on the top of $C_2$.  Indeed the fact that $\sigma$ is contained in the top of $C_2$ implies that the circumference of $C_2$ is greater than or equal to the length of $\sigma$, so such a trajectory does exist.  Let $\beta$ denote this trajectory.  Its maximal homotopy class relative to the singularities of $\omega$ determine the desired cylinder.

Let $(X', \omega')$ be the cylinder pinch of $(X, \omega)$.  After removing the nodes, $\beta$ restricts to a path between two distinct punctures on the elliptic curve.  By Proposition~\ref{NewForniCriterion}, the orbit closure of $(X, \omega)$ has trivial Forni subspace.
\end{proof}

\subsection{Case 4}
\label{Case4Sect}

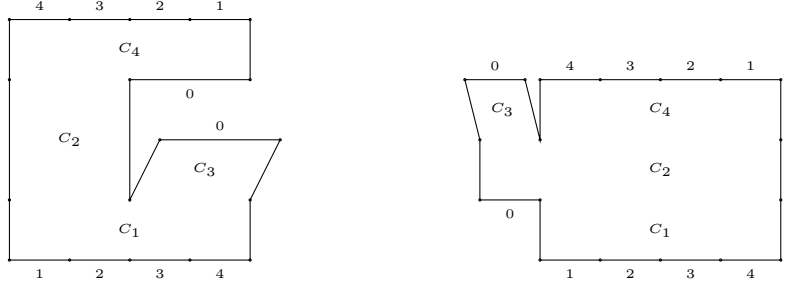
\begin{figure}[ht]
\centering
\begin{minipage}[b]{0.24\linewidth}
\centering
\begin{tikzpicture}[scale=0.40]
\draw (0,0)--(0,8)--(8,8)--(8,6)--(4,6)--(4,2)--(5,4)--(9,4)--(8,2)--(8,0)--cycle;
\foreach \x in {(0,2),(8,2),(0,6),(4,6),(8,6),(4,2),(5,4),(9,4)} \draw \x circle (1pt);
\foreach \x in {(0,0),(2,0),(4,0),(6,0),(8,0),(0,8),(2,8),(4,8),(6,8),(8,8)} \draw \x circle (1pt);

\draw(1,8) node[above] {\tiny $4$};
\draw(3,8) node[above] {\tiny $3$};
\draw(5,8) node[above] {\tiny $2$};
\draw(7,8) node[above] {\tiny $1$};
\draw(1,0) node[below] {\tiny $1$};
\draw(3,0) node[below] {\tiny $2$};
\draw(5,0) node[below] {\tiny $3$};
\draw(7,0) node[below] {\tiny $4$};

\draw(6,6) node[below] {\tiny $0$};
\draw(7,4) node[above] {\tiny $0$};

\draw(4,1) node {\tiny $C_1$};
\draw(2,4) node {\tiny $C_2$};
\draw(6.5,3) node {\tiny $C_3$};
\draw(4,7) node {\tiny $C_4$};
\end{tikzpicture}
\end{minipage}
\begin{minipage}[t]{0.24\linewidth}
\centering
\begin{tikzpicture}[scale=0.40]
\end{tikzpicture}
\end{minipage}
\begin{minipage}[b]{0.24\linewidth}
\centering
\begin{tikzpicture}[scale=0.40]
\draw (0,0)--(0,2)--(-0.5,4)--(1.5,4)--(2,2)--(2,4)--(10,4)--(10,-2)--(2,-2)--(2,0)--cycle;
\foreach \x in {(0,2),(2,2),(10,2),(10,0),(0,0),(2,0),(2,-2),(4,-2),(6,-2),(8,-2),(10,-2), (-0.5,4),(1.5,4),(2,4),(4,4),(6,4),(8,4),(10,4)} \draw \x circle (1pt);
\draw(0.5,4) node[above] {\tiny $0$};
\draw(3,4) node[above] {\tiny $4$};
\draw(5,4) node[above] {\tiny $3$};
\draw(7,4) node[above] {\tiny $2$};
\draw(9,4) node[above] {\tiny $1$};
\draw(1,0) node[below] {\tiny $0$};
\draw(3,-2) node[below] {\tiny $1$};
\draw(5,-2) node[below] {\tiny $2$};
\draw(7,-2) node[below] {\tiny $3$};
\draw(9,-2) node[below] {\tiny $4$};

\draw(6,-1) node {\tiny $C_1$};
\draw(6,1) node {\tiny $C_2$};
\draw(.75,3) node {\tiny $C_3$};
\draw(6,3) node {\tiny $C_4$};
\end{tikzpicture}
\end{minipage}
 \caption{Cylinder Diagrams 4A (left) and 4B (right)}
 \label{Case4CylDiagsFig}
\end{figure}

By \cite[Lem.~4.10]{AulicinoZeroExpGen3}, there are two $4$-cylinder diagrams satisfying Case 4, which we call 4A and 4B following \cite{AulicinoZeroExpGen3}, and they are depicted in Figure~\ref{Case4CylDiagsFig}.  It was proven in \cite[Lem.~4.12]{AulicinoZeroExpGen3} that Cylinder Diagram 4B is impossible in a rank one invariant subvariety with a non-trivial Forni subspace.  We use Proposition~\ref{NewForniCriterion} to give a simpler proof that excludes Cylinder Diagram 4B.

\begin{proposition}
\label{Case4AExc}
If a genus three translation surface $(X, \omega) \in \cH(1^4)$ satisfies Case 4, then its  orbit closure has trivial Forni subspace.
\end{proposition}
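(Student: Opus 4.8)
The plan is to reduce to Proposition~\ref{NewForniCriterion}, just as in the proofs of Propositions~\ref{Case1Excluded} and~\ref{Case2Excluded}. By \cite[Lem.~4.10]{AulicinoZeroExpGen3} there are exactly two cylinder diagrams realizing Case~4, namely 4A and 4B of Figure~\ref{Case4CylDiagsFig}, so it suffices to handle each separately. In both diagrams the cylinder pinch $(X',\omega')$ has the dual graph of Case~4 in Table~\ref{MainConfigTable}: a unique elliptic component $E$, joined to two \emph{distinct} genus-zero components by the two nodes associated to two of the four horizontal cylinders, while the remaining two cylinders account for the double edge between the genus-zero components. Since $X'$ has geometric genus one, it is enough, by Proposition~\ref{NewForniCriterion}, to exhibit in each diagram a simple closed path $\beta$ that is realized as the core curve of a cylinder on some surface in the $\splin$-orbit closure of $(X,\omega)$ and whose image under the pinch meets $E$ in a single arc joining two distinct nodes of $E$.

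For each of 4A and 4B I will produce such a $\beta$ explicitly. As in the proof of Proposition~\ref{Case2Excluded}, I first use horocycle shears together with cut-and-paste moves following the conventions of Figure~\ref{SCFigConvention} --- all of which preserve the orbit closure --- to put the diagram into a normal form in which there is a closed transverse trajectory crossing exactly three of the four horizontal cylinders, once each: the two cylinders whose nodes lie on $E$ and one of the two cylinders realizing the double edge. Reading off the identifications from Figure~\ref{Case4CylDiagsFig}, this trajectory closes up, and its maximal homotopy class relative to the zeros of $\omega$ is the core curve of a cylinder; indeed, on a Veech surface every closed regular trajectory lies in a cylinder by complete periodicity. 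Tracking how these three cylinders degenerate under the pinch shows that the portion of $\beta$ lying on $E$ is a single arc running from one preimage of a node to the preimage of the other node on $E$. As these are two distinct points of the elliptic curve, Proposition~\ref{NewForniCriterion} gives that the orbit closure of $(X,\omega)$ has trivial Forni subspace.

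The main obstacle is the diagram-specific bookkeeping. For each of 4A and 4B one must check (i) that after a suitable shear the chosen transverse trajectory genuinely closes up into a \emph{simple} closed curve bounding a cylinder, and (ii) that under the cylinder pinch its image meets $E$ in a \emph{connected} arc whose two endpoints are \emph{distinct} nodes, and not, say, an arc with coincident endpoints or a disjoint union of arcs, neither of which satisfies the hypothesis of Proposition~\ref{NewForniCriterion}. Both points follow from the explicit gluings in Figure~\ref{Case4CylDiagsFig} together with the identification of which two cylinders become the nodes of $E$. I expect 4B to require slightly more care because its cylinders are arranged less symmetrically, and I note that 4B can alternatively be excluded by appealing to \cite[Lem.~4.12]{AulicinoZeroExpGen3}.
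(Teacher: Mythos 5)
Your high-level strategy matches the paper's: in both diagrams 4A and 4B, find a simple closed $\beta$ that, in the orbit closure, is the core curve of a cylinder crossing three of the four horizontal cylinders exactly once, and then invoke Proposition~\ref{NewForniCriterion}. Your handling of 4B is essentially the paper's, and the alternative exclusion of 4B via \cite[Lem.~4.12]{AulicinoZeroExpGen3} is also noted there.

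However, for Cylinder Diagram 4A there is a genuine gap. You assert that after shearing and cutting-and-pasting one can ``read off from Figure~\ref{Case4CylDiagsFig}'' a closed transverse trajectory crossing $C_1$, $C_2$, $C_4$ exactly once, and dismiss the remainder as ``diagram-specific bookkeeping.'' But this existence is precisely the content of the paper's Proposition~\ref{CylExists}, and it is \emph{not} a routine normal-form check the way it is for Case~2 or for 4B. The paper's argument for 4A is genuinely different in character: it places $C_2$ (the larger of $C_2,C_3$, of circumference $s\geq\tfrac12$) as a full rectangle, models the gluing between the bottom of $C_1$ and the top of $C_4$ by a Lebesgue-measure-preserving piecewise isometry $f:[0,1]\to[0,1]$, and shows $J\cap f(J)\neq\emptyset$ for $J=[0,s]$. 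When $s>\tfrac12$ this is immediate from $\mu(J)+\mu(f(J))>1$, but the boundary case $s=\tfrac12$ requires a separate analysis locating the unique $x$ with $f(x)=\tfrac12$ and showing $x\in[0,\tfrac12)$. Nothing in your proposal acknowledges that such a measure-theoretic step is needed, nor why a naive cut-and-shear argument (which works for 4B because every top saddle connection of $C_4$ is glued to the bottom of $C_1$) could fail for 4A. Without supplying an argument of this kind, the 4A case of your proof is incomplete.

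Two smaller points. First, the claim ``on a Veech surface every closed regular trajectory lies in a cylinder by complete periodicity'' misattributes the reason: any closed regular trajectory on \emph{any} translation surface bounds a cylinder, namely its maximal parallel homotopy class; complete periodicity is the stronger statement that the entire direction decomposes into cylinders, and is not what is used here. Second, the paper does not need to ``track how the cylinders degenerate'' abstractly: because $\beta$ crosses $C_1,C_2,C_4$ once each, under the pinch $\beta$ crosses exactly the corresponding three nodes, and since two of these are the nodes on $E$ the restriction of $\beta$ to $E$ is automatically a single arc between two distinct nodes. Stating this explicitly would make your appeal to Proposition~\ref{NewForniCriterion} cleaner.
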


\begin{proof}
By \cite[Lem.~4.10]{AulicinoZeroExpGen3}, there are two $4$-cylinder diagrams satisfying Case 4, and they are depicted in Figure~\ref{Case4CylDiagsFig}.  
We claim that regardless of whether the translation surface satisfies Cylinder Diagram 4A or 4B, there exists a cylinder $C$ crossing each cylinder $C_1$, $C_2$, and $C_4$ exactly once before closing. In Cylinder Diagram 4A, this is the content of Proposition~\ref{CylExists}, proved in the next section.

For Cylinder Diagram 4B, cylinder $C$ is the non-horizontal cylinder with core curve $\beta$ depicted in Figure~\ref{Case4BPfFig}.  The existence of cylinder $C$ can be seen as follows.  Consider cylinder $C_2$ and cut and glue it so that saddle connection $0$ lies on the bottom of cylinder $C_2$ exactly where it does.  Then shear the surface with an element of $\splin$ so that the bottom of cylinder $C_3$ lies directly over saddle connection $0$.  Consequentially, the bottom of cylinder $C_4$ lies directly over the top of cylinder $C_1$.  Cut and reglue cylinders $C_1$ and $C_4$ so that they they are rectangles following the convention of Figure~\ref{SCFigConvention}.  From Figure~\ref{Case4CylDiagsFig} we see that every saddle connection on the top of $C_4$ is contained in the bottom of $C_1$.  Therefore, there is a regular point on the top of $C_4$ that is identified to a point on the bottom of $C_1$ and by considering its homotopy class, it determines a cylinder $C$ with core curve $\beta$ as depicted in Figure~\ref{Case4BPfFig}.

Let $(X', \omega')$ be the cylinder pinch of $(X, \omega)$.  After removing the nodes, $\beta$ restricts to a path between the two distinct punctures on the resulting elliptic curve.  By Proposition~\ref{NewForniCriterion}, the orbit closure of $(X, \omega)$ has trivial Forni subspace.
\end{proof}

\begin{figure}[ht]
\centering
\begin{minipage}[b]{0.5\linewidth}
\begin{tikzpicture}[scale=0.40]
\draw (0,0)--(0,2)--(-0.5,6)--(1.5,6)--(2,2)--(2,4)--(10,4)--(10,-2)--(2,-2)--(2,0)--cycle;
\foreach \x in {(0,2),(2,2),(10,2),(0,0),(2,0),(10,0),(-0.5,6),(1.5,6),(7,4),(9,4),(8,-2),(6,-2)} \draw \x circle (1pt);
\draw[dashed] (8,-2)--(9,4);
\draw[dashed] (6,-2)--(7,4);
\draw(4,1) node {\tiny $C_2$};
\draw(4,-1) node {\tiny $C_1$};
\draw(0.75,4) node {\tiny $C_3$};
\draw(4,3) node {\tiny $C_4$};
\draw(0.5,6) node[above] {\tiny 0};
\draw(8,4) node[above] {\tiny $\sigma$};
\draw(1,0) node[below] {\tiny 0};
\draw(7,-2) node[below] {\tiny $\sigma$};

\draw [->, cyan] (7,-2) to [out=90,in=-90] (8,4);
\node [above left] at (7.75,.85) {$\beta$};
\end{tikzpicture}
\end{minipage}
 \caption{A non-horizontal cylinder $C$ in Cylinder Diagram 4B with core curve $\beta$}
 \label{Case4BPfFig}
\end{figure}
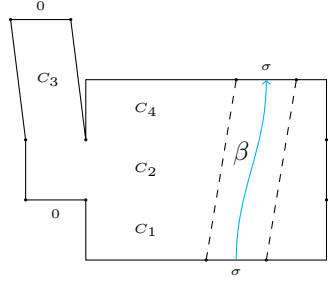

To finish the proof of \Cref{Case4AExc}, it remains to treat the Cylinder Diagram 4A.

\begin{proposition}
\label{CylExists}
Let $(X, \omega) \in \cH(1^4)$ be a translation surface satisfying Cylinder Diagram 4A.  Then there exists a cylinder with core curve crossing $C_1$, $C_2$ and $C_4$ exactly once.
\end{proposition}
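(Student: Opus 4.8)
The plan is to produce the cylinder $C$ by the same cut-and-glue plus shearing technique used to dispose of Cylinder Diagram 4B in the proof of Proposition~\ref{Case4AExc}: I will exhibit a closed regular trajectory transverse to the horizontal that runs exactly once around a cyclic chain formed by three of the four cylinders, namely $C_1\to C_2\to C_4\to C_1$.

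First I would read off the gluing pattern of Cylinder Diagram 4A from Figure~\ref{Case4CylDiagsFig}: the top of $C_4$ is glued to the bottom of $C_1$ (the saddle connections $1,2,3,4$); a saddle-connection arc on the top of $C_1$ is glued to the entire bottom of $C_2$; and the entire top of $C_2$ is glued to a saddle-connection arc on the bottom of $C_4$. Thus $C_1$, $C_2$ and $C_4$ form a cyclic chain in which consecutive cylinders share a common saddle-connection arc, while $C_3$ is glued only to the complementary arcs on the top of $C_1$ and on the bottom of $C_4$, so it never appears in this chain.

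Next I would normalize the picture and build the trajectory. Using cut-and-glue operations, which preserve the point in moduli space, I would redraw $C_1$, $C_2$ and $C_4$ as rectangles following the convention of Figure~\ref{SCFigConvention}, stacked so that the arc $C_1\cap C_2$ lies along the top of the rectangle $C_1$ with the rectangle $C_2$ directly above it, the top of $C_2$ lies directly below the arc $C_2\cap C_4$ on the bottom of the rectangle $C_4$, and the top of $C_4$ lies directly above the bottom of $C_1$; I would accompany this with a figure analogous to Figure~\ref{Case4BPfFig}. Now fix a transverse slope and a regular point $p_0$ on the bottom of $C_1$, and follow the straight trajectory upward: it crosses $C_1$ and meets the top of $C_1$; provided $p_0$ is chosen so that this exit point lies in the arc glued to $C_2$ (a positive-length arc, so such $p_0$ exist), the trajectory passes into $C_2$, crosses it and meets its top, passes into $C_4$ (automatically, since the whole top of $C_2$ is glued into the bottom of $C_4$), crosses $C_4$, and returns to the bottom of $C_1$. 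The horizontal displacement on returning is an affine, nonconstant function of the slope, so one may choose the slope (a dense set of slopes works) so that the displacement is an integral multiple of the circumference of $C_1$; then a generic admissible $p_0$ yields a closed regular geodesic crossing $C_1$, $C_2$, $C_4$ exactly once each and avoiding $C_3$. Equivalently, one may fix the slope and adjust the displacement to zero by a shear in $\splin$, which preserves the orbit closure. The maximal homotopy class of this geodesic relative to the zeros of $\omega$ is the desired cylinder $C$.

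The only delicate point is the bookkeeping in the normalization step — guaranteeing that after the stacking a single honest straight trajectory really does pass from $C_1$ into $C_2$ into $C_4$ and back without first hitting a zero or wrapping around a cylinder. This is the analogue of the circumference-versus-length inequality that carried the proof of Case 2 (Proposition~\ref{Case2Excluded}), and here it is in fact easier: the relevant gluings are whole-boundary-to-sub-arc or whole-to-whole (the bottom of $C_2$ is a whole boundary circle glued into the top of $C_1$, the top of $C_2$ is a whole boundary circle glued into the bottom of $C_4$, and the top of $C_4$ is the whole bottom of $C_1$), so the sole genuine constraint is that the exit point through the top of $C_1$ land in the arc glued to $C_2$, which the freedom in $p_0$ (or the shear) accommodates. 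I would carry out this verification explicitly, splitting into the finitely many sub-cases for the relative circumferences of $C_1$, $C_2$, $C_4$ if it proves convenient.
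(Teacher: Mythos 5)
Your overall plan matches the paper's: normalize $C_1$, $C_2$, $C_4$ into a vertical stack by cut-and-glue and shearing, and produce a closed regular trajectory around the chain $C_1\to C_2\to C_4\to C_1$. The gap is in the step you call ``the only delicate point'' and then dismiss as easy. The return from the bottom of $C_1$ back to itself is \emph{not} governed by a single affine function of the slope: the gluing of the top of $C_4$ to the bottom of $C_1$ is an interval exchange, so the offset jumps from piece to piece as $p_0$ varies. Choosing the slope to kill the displacement for one $p_0$ does not make a generic admissible $p_0$ close up, and the admissible set (starting points whose exit from the top of $C_1$ lands on the arc glued to $C_2$ rather than to $C_3$) itself translates as you shear, so you are chasing two moving constraints with no argument that they are simultaneously satisfiable. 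If the arc glued to $C_2$ were shorter than half the circumference of $C_1$ and the interval exchange carried it entirely off itself, no trajectory of the desired type would close, and nothing in your sketch would detect this.

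The paper closes the gap with two ingredients your proposal omits. First it relabels so that $C_2$ is the \emph{wider} of the two middle cylinders; since the circumferences of $C_2$ and $C_3$ sum to that of $C_1$, this forces $s\geq 1/2$ once the circumference of $C_1$ and $C_4$ is normalized to one. It then treats the gluing $f$ of the bottom of $C_1$ to the top of $C_4$ as a Lebesgue-measure-preserving piecewise isometry and uses the pigeonhole estimate $\mu\bigl(f([0,s])\cap[0,s]\bigr)>0$ when $s>1/2$ to find an interval $(a,b)\subset[0,s]$ with $f\bigl((a,b)\bigr)\subset[0,s]$. The straight lines joining $(a,b)$ on the bottom of $C_1$ to $f\bigl((a,b)\bigr)$ on the top of $C_4$ stay inside the $C_2$ column and close up into the desired cylinder; the slope is determined by the endpoints, not chosen in advance. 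The borderline case $s=1/2$ requires a further continuity argument also missing from your sketch. Without the circumference inequality and the measure argument, the proof does not go through.
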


\begin{proof}
We transform a translation surface $(X, \omega)$ satisfying Cylinder Diagram 4A as follows so that it is depicted as in Figure~\ref{Case4AFigCylPf}.  Let $C_2$ be the cylinder with the larger circumference of the cylinders $\{C_2, C_3\}$.  If $C_1$ (and necessarily $C_4$) have unit circumference, then $C_2$ necessarily has circumference $s \geq \frac{1}{2}$.  Shear the surface so that $C_2$ is depicted as a rectangle with its singularities at its corners.  Then the cylinders $C_1$ and $C_4$ should be placed below and above $C_2$, respectively, as rectangles.  We do \emph{not} assume that the top corners of $C_4$ and the bottom corners of $C_1$ are singularities following the convention of Figure~\ref{SCFigConvention}.

\begin{figure}[ht]
\centering
\begin{minipage}[b]{0.24\linewidth}
\begin{tikzpicture}[scale=0.40]
\draw (0,0)--(0,8)--(8,8)--(8,7)--(4,7)--(4,2)--(8,2)--(8,0)--cycle;
\foreach \x in {(0,2),(0,7),(4,7),(4,2)} \draw \x circle (1pt);

\foreach \x in {(0,0),(3,0),(4, 0), (7,0), (8,0),(0,8),(1,8),(4,8),(5,8),(8,8)} \draw \x circle (1pt);
\draw[dashed] (3,0)--(4,8);
\draw[dashed] (3.1,0)--(4.1,8);
\draw(1,4.5) node {\tiny $C_2$};
\draw(1,7.25) node {\tiny $C_4$};
\draw(1,1) node {\tiny $C_1$};
\draw(0.5,8) node[above] {\tiny 3};
\draw(2.5,8) node[above] {\tiny 2};
\draw(4.5,8) node[above] {\tiny 1};
\draw(6.5,8) node[above] {\tiny 0};
\draw(1.5,0) node[below] {\tiny 0};
\draw(3.5,0) node[below] {\tiny 1};
\draw(5.5,0) node[below] {\tiny 2};
\draw(7.5,0) node[below] {\tiny 3};

\end{tikzpicture}
\end{minipage}
\centering
\begin{minipage}[b]{0.24\linewidth}
\begin{tikzpicture}[scale=0.20]
\end{tikzpicture}
\end{minipage}
\centering
\begin{minipage}[b]{0.24\linewidth}
\begin{tikzpicture}[scale=0.40]
\draw (0,0)--(0,8)--(8,8)--(8,7)--(4,7)--(4,2)--(8,2)--(8,0)--cycle;
\foreach \x in {(0,2),(0,7),(4,7),(4,2)} \draw \x circle (1pt);

\foreach \x in {(4.5,0),(5.5,0),(6, 0), (6.75,0),(0.5,8),(1.25,8),(1.75,8),(2.75,8)} \draw \x circle (1pt);
\draw[dashed] (0,0)--(4,8);
\draw[dashed] (.5,0)--(4.5,8);
\draw(1,4.5) node {\tiny $C_2$};
\draw(1,7.25) node {\tiny $C_4$};
\draw(4,1) node {\tiny $C_1$};
\draw(0.25,8) node[above] {\tiny 0'};
\draw(0.875,8) node[above] {\tiny 3};
\draw(1.5,8) node[above] {\tiny 2};
\draw(2.25,8) node[above] {\tiny 1};
\draw(5.125,8) node[above] {\tiny 0};
\draw(2.25,0) node[below] {\tiny 0'};
\draw(5,0) node[below] {\tiny 1};
\draw(5.75,0) node[below] {\tiny 2};
\draw(6.375,0) node[below] {\tiny 3};
\draw(7.375,0) node[below] {\tiny 0};

\end{tikzpicture}
\end{minipage}
 \caption{Finding cylinders as described in the proof of Proposition~\ref{CylExists}}
 \label{Case4AFigCylPf}
\end{figure}
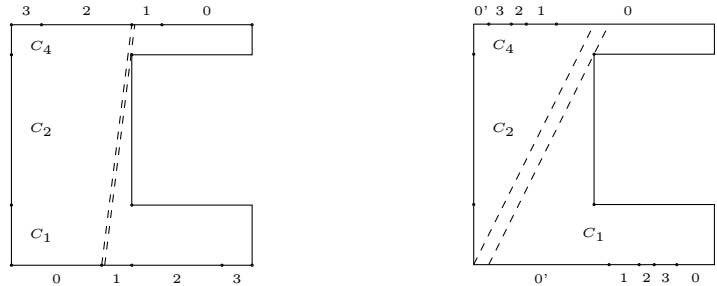

Identify the bottom of $C_1$ and the top of $C_4$ in Figure~\ref{Case4AFigCylPf} with the interval $[0, 1]$.  For this proof, we distinguish every point on the bottom of $C_1$, and the top of $C_4$, even though some are identified, e.g., $0$ and $1$.  Let $f:[0,1]\to [0,1]$ be the piecewise isometry that is continuous on half-open intervals that are open on the right and describes how the bottom of $C_1$ is glued to the top of $C_4$ such as in the examples depicted in Figure~\ref{Case4AFigCylPf}.\footnote{In fact, $f$ is an interval exchange transformation that is pre and post-composed with a translation.}  Let $\mu$ denote the Lebesgue measure on $\bR$. Observe that for any measurable set $J$, $\mu(J) = \mu(f(J))$.

Consider the interval $J=[0, s]$ on the bottom of $C_1$.  If there exists an interval $(a, b) \subset J$ on the bottom of $C_1$ such that its copy $f((a,b))$ lies in the interval $[0, s]$ on the top of $C_4$, then the trajectories from $(a, b)$ to itself form a cylinder as desired.  

We claim that if $s > \frac{1}{2}$, then we are done. Since $f$ preserves the Lebesgue measure, we have  $\mu(J) = \mu(f(J))>\frac{1}{2}$. Thus $\mu(J \cap f(J)) > 0$, which implies that there exists some subinterval $(a,b) \subset J \cap f(J)$ as above.

It remains to examine the case $s=\frac{1}{2}$. In this case, $J = [0, \frac{1}{2}]$ on the bottom of $C_1$.  If any positive measure portion of $J$ occurs in $(0, \frac{1}{2})$ on the top of $C_4$, then we are done as above.  Hence, $f(J)$ is a subset of $[\frac{1}{2}, 1]$.

There exists a unique $x\in [0,1)$ such that $f(x)=1/2$. Then $x$ has to be contained in $[0,1/2)$, since otherwise some positive measure set $(x,x+\epsilon)$ is mapped into $[1/2,1]$ which contradicts that $\mu(f(J))=1/2$. Now that we know that $x\in [0,1/2)$, we can connect the interval $(x,x+\epsilon)$ on the bottom of $C_1$ to $(1/2,1/2+\epsilon)$ on the top of $C_4$ using straight lines to construct a cylinder that passes through $C_1, C_2$ and $C_4$ exactly once because $C_4$ has positive height.  Two such examples are depicted in Figure~\ref{Case4AFigCylPf}.
\end{proof}

\section{Case 3}

\begin{figure}
\begin{centering}
\includegraphics[scale=0.3]{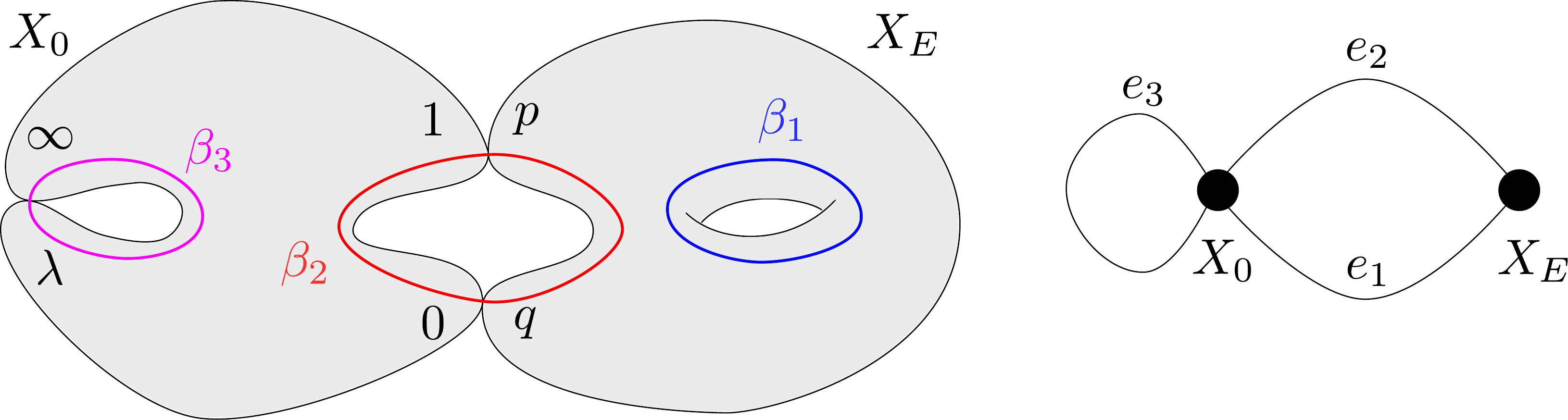}
\caption{The nodal surface in Case 3 and its dual graph}
\label{fig:Case3}
\end{centering}
\end{figure}

\begin{proposition}
\label{Case3Exc}
If a genus three translation surface satisfies Case 3, then its  orbit closure has trivial Forni subspace.
\end{proposition}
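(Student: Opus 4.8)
\emph{Plan.} Assume for contradiction that the orbit closure of $(X,\omega)$ has non-trivial Forni subspace. By our convention, $(X,\omega)$ is horizontally periodic and its cylinder pinch $(X',\omega')$ has the dual graph of row $3$ of Table~\ref{MainConfigTable}: a genus-zero component $P\cong\mathbb{P}^1$ carrying a self-node $e_1$ (the loop) together with two further nodes $e_2,e_3$ joining $P$ to an elliptic component $E$, so that the geometric genus is $g'=1$. We may assume the horizontal cylinders have pairwise commensurable moduli (automatic on a Teichm\"uller curve; otherwise deform by a Wright cylinder deformation as in Appendix~\ref{GeneralizationAppendix}), so the results of \Cref{sec:Asymp} apply. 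Choose a symplectic basis $\cB$ adapted to the cylinder pinch: $\alpha_1,\beta_1$ a symplectic basis of $E$; $\alpha_2$ the core curve of the cylinder over $e_1$ and $\beta_2$ a dual cycle taken supported on $P$ (running through the neck at $e_1$); and $\alpha_3$ the core curve of the cylinder over $e_2$ (so the core curve over $e_3$ is $\pm\alpha_3$ in homology) with $\beta_3$ a dual cycle crossing both $e_2$ and $e_3$. Let $\{\Theta_1(t),\Theta_2(t),\Theta_3(t)\}$ be the associated $A$-normalized basis with limits $\Theta_1,\Theta_2,\Theta_3$ on $X'$; then $\Theta_1$ restricts to a holomorphic differential on $E$ (and vanishes on $P$), in particular it is \emph{nowhere vanishing} on $E$ since a non-zero holomorphic differential on an elliptic curve has no zeros.

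By Proposition~\ref{ForniCanonicalBasisProp} we have $\dim\hF[(X_t,\omega_t)]\le g'=1$, hence equality, and $\hF[(X_t,\omega_t)]=\langle\Theta_1(t)\rangle$; moreover by part (2) of that proposition the first row of $\tfrac{d\Pi(t)}{dt}$ vanishes identically. Since $\Theta_1$ is holomorphic at every node, Proposition~\ref{prop:Asymp} gives $\lim_{s\to 0}\int_{\beta_j}\Theta_1(s)=\int_{\beta_j}\Theta_1$ with no logarithmic term, and combined with the vanishing first row of $\tfrac{d\Pi}{dt}$ this forces $\int_{\beta_j}\Theta_1(s)$ to be \emph{constant} in $s$ for every $j$; equivalently, every period of $\Theta_1(s)$ is constant along the Teichm\"uller geodesic. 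The plan is to contradict this for $j=1$ by examining the $s$-expansion of $\int_{\beta_1}\Theta_1(s)$, whose constant term is the modular parameter $\tau_E$ of $E$.

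Work in the Case~3 plumbing coordinates of \Cref{sec:Coord}: on $P$ use the standard coordinate $z$ and set $z_e:=z-q_e$ at each preimage, so the Cauchy kernel and bidifferential on $P=\mathbb{P}^1$ are the standard ones and Lemma~\ref{lemma:noContr} applies. Writing $\Theta_1(s)=\Theta_1+\sum_{k\ge 1}\eta^{(k)}(s)$ as in the proof of Proposition~\ref{prop:Asymp}, the correction to $\int_{\beta_1}\Theta_1(s)$ is a sum over oriented paths in the dual graph $\Gamma$ from the component supporting $\Theta_1$ (namely $E$) to the component supporting $\beta_1$ (namely $E$). There are no length-one such paths, and the length-two paths are $(e_2,-e_2)$, $(e_3,-e_3)$, $(e_2,-e_3)$ and $(e_3,-e_2)$. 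By Lemma~\ref{lemma:noContr} the first two, which leave $E$ and immediately return through the same node into the genus-zero component $P$, contribute nothing. By Lemma~\ref{lemma:lowestOrder} the remaining two each contribute a term of order $s^{\,n_{e_2}+n_{e_3}}$ with coefficient
\[
a_{e_2}a_{e_3}\,\hol\bigl(\Theta_1(q_{e_2}^+)\bigr)\,\hol\bigl(\Theta_1(q_{e_3}^+)\bigr)\,\boldsymbol{\omega}_{P}\bigl(q_{e_2}^-,q_{e_3}^-\bigr),
\]
and since $\boldsymbol{\omega}_P$ is symmetric these two coefficients are \emph{equal}, so they add rather than cancel. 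Each factor is non-zero: $a_{e_2},a_{e_3}\neq 0$ by definition; $\hol(\Theta_1(q_{e_i}^+))\neq 0$ because $\Theta_1$ is a nowhere-vanishing holomorphic differential on $E$; and $\boldsymbol{\omega}_P(q_{e_2}^-,q_{e_3}^-)=-(q_{e_2}^--q_{e_3}^-)^{-2}\neq 0$ since the two preimages are distinct points of $\mathbb{P}^1$. Thus $\int_{\beta_1}\Theta_1(s)$ is not constant, the desired contradiction.

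\emph{Main obstacle.} The delicate point is making the last step fully rigorous: one must verify that no other oriented path in $\Gamma$ — in particular length-three paths through the self-node, such as $(e_2,e_1^{\pm},-e_2)$ — contributes at weighted length $n_{e_2}+n_{e_3}$ in a way that cancels the symmetric pair identified above. This demands a careful bookkeeping of weighted lengths $l(\gamma)$ together with control of the integers $n_e$ (via the choice of plumbing parameters), or, absent a clean separation of orders, repeated use of Lemma~\ref{lemma:noContr} and the explicit form of the $\mathbb{P}^1$-Cauchy kernel to show the lowest surviving term is precisely the symmetric pair. The remaining ingredients are routine: the identification of $\Theta_1$ on $X'$ is immediate from the definition of the adapted $A$-normalized basis, and the reductions to commensurable moduli and to the holomorphic Forni subspace are supplied by Appendix~\ref{GeneralizationAppendix} and \Cref{ForniSubspaceSection}.
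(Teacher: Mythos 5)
Your setup and your computation of the two symmetric length-two contributions are correct and match the paper's, but the ``main obstacle'' you flag at the end is a genuine gap that you have identified without closing, and the paper closes it with a short argument you did not find. The issue is precisely this: in the Case~3 plumbing coordinates you have (as you note) no flat-geometric control on the exponents $n_e$, so a priori you do not know that $n_{e_2}=n_{e_3}$ (in your labeling). If, say, $n_{e_2}<n_{e_3}$, then a three-edge loop through the self-node such as $(e_2,\pm e_1,-e_2)$ has weighted length $2n_{e_2}+n_{e_1}$, which can be \emph{strictly smaller} than $n_{e_2}+n_{e_3}$; Lemma~\ref{lemma:noContr} only kills two-edge back-and-forth paths $(e,-e)$, so nothing you have cited rules out such a loop contributing at a lower order than your symmetric pair (or at the same order, potentially cancelling it). Without that, the claim that the lowest-order nonconstant term of $\int_{\beta_1}\Theta_1(s)$ is $2a_{e_2}a_{e_3}s^{n_{e_2}+n_{e_3}}\cdot(\cdots)$ is not established.

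The paper resolves this before touching $\int_{\beta_1}\Theta_1$: it first shows $n_{e_2}=n_{e_3}$ by using another entry forced constant by the Forni subspace, namely $\int_{\beta_3}\Theta_1(s)=\int_{\beta_1}\Theta_3(s)$ (you already observed that \emph{all} periods of $\Theta_1(s)$ are constant but then used only $j=1$). If $n_{e_2}<n_{e_3}$, then by Lemma~\ref{lemma:lowestOrder} the unique shortest path from $E$ to the support of $\Theta_3$ is the single edge $e_2$, giving a nonvanishing leading term $a_{e_2}\Theta_3(q_{e_2}^-)\Theta_1(q_{e_2}^+)s^{n_{e_2}}$, contradicting constancy; by symmetry $n_{e_2}=n_{e_3}$. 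Once $n_{e_2}=n_{e_3}=:n$, every loop from $E$ to $E$ of length $\geq 3$ has weighted length $\geq 2n+\min_e n_e>2n$, so the four two-edge loops are exactly the minimal ones, two vanish by Lemma~\ref{lemma:noContr}, and your computation of the surviving symmetric pair finishes the proof. You should add this $n_{e_2}=n_{e_3}$ step; without it the order bookkeeping is not rigorous, which is exactly the worry you raised.
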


Let $(X',\omega')$ be the cylinder pinch  of $(X,\omega)$.
In Case 3, as seen in \Cref{fig:Case3}, the nodal curve $X'$ has two irreducible components: one component $X_0$ of genus zero and one elliptic component $X_E$.
The component $X_0$ has four marked points, which we choose to be $0,1,\lambda,\infty$. We identify $\lambda$ and $\infty$ to form a self-node $e_3$ and identify $0$ and $1$ with points  $p,q$ on $X_E$ to form two nodes  $e_1,e_2$ connecting $X_0$ and $X_E$.

We now choose a symplectic basis adapted to the \CylP $(X', \omega')$ above such that the $B$-cycles are of the following form.
The cycle $\beta_1$ is supported on the elliptic curve $X_E$, the cycle $\beta_2$ is a loop passing from  $X_0$ to the elliptic curve $X_E$ and back, not passing through the self node, and $\beta_3$ is a cycle through the self node, only supported on the $X_0$.
Let $\{\Theta_1,\Theta_2,\Theta_3\}$ be the corresponding $A$-normalized basis.

\begin{proof}[Proof of Prop.~\ref{Case3Exc}]
We assume by contradiction that the Forni subspace of $(X, \omega)$ is non-trivial. Since $X'$ has geometric genus one, it follows from  Proposition~\ref{ForniCanonicalBasisProp} that the holomorphic Forni subspace is $1$-dimensional and generated by the differential $\Theta_1(s)$ supported on the elliptic curve. Furthermore, the first row and column of the period matrix with respect to the chosen symplectic basis is constant along the geodesic flow.

As stated in \Cref{sec:Coord}, to evaluate the periods along the geodesic flow, we need to choose a local coordinate chart near every node. In this case, it is convenient to use the standard coordinate $z$ on $\PP^1$ \textcolor{black}{and the global holomorphic coordinate on the elliptic curve $E$, which by abuse of notation, we also denote by $z$}. We can still describe the geodesic flow by removing a small disc of radius $\{|z_e|\leq \sqrt{|s_e|}\}$, where $s_e=s^{n_e}a_e(1+f_e(s))$ by \Cref{eq:ae}.  We will write $n_i$ and $a_i$ instead of $n_{e_i}$ and $a_{e_i}$, respectively, for the remainder of the proof.
\textcolor{black}{Let $e_1$ be the node between $0$ and $q$, and let $e_2$ be the node between $1$ and $p$ (see Figure~\ref{fig:Case3}).}

We first claim that $n_{1}=n_{2}$.  \textcolor{black}{
To see this we consider the period \[
\int_{\beta_3}\Theta_1(s) = \int_{\beta_1}\Theta_3(s).\]
Since $\Theta_1$ spans the Forni subspace, the period $\int_{\beta_3}\Theta_1(s)$ is constant.}
Assume for the sake of contradiction that $n_{1}<n_{2}$. Then it follows from the asymptotic formula \Cref{lemma:lowestOrder} that
\[\int_{\beta_1}\Theta_3(s)= a_{1}\Theta_3(0)\Theta_1(q)s^{n_{1}}+ O(s^{n_{1}+1}),\]
which is nonzero since both  differentials $\Theta_1$ and $\Theta_3$ have no zero.  This yields a contradiction because $n_{1}$ is a positive integer and $\int_{\beta_1}\Theta_3(s)$ is constant.  Therefore, $n_{1} \geq n_{2}$, and by the symmetry of this argument, $n_{1} \leq n_{2}$ as well.

We now focus on \textcolor{black}{$\int_{\beta_1} \Theta_1(s)$.} It follows from $n_{1}=n_{2}$ that \[\int_{\beta_1} \Theta_1(s)=O(s^{2n_{1}})\] and that there are exactly  four oriented loops of weighted length $2n_1$ starting in the elliptic component. All remaining such loops  contain one of these four paths and thus have larger weighted length. 
The four oriented paths of weighted length $2n_{1}$ starting in the elliptic component can be described as follows. Let $e_1$ and $e_2$ be the two oriented nodes connecting the elliptic component to the genus zero components, oriented such that the start point is on the elliptic component (see Figure~\ref{fig:Case3}). The four paths $\gamma_i$, for $i\in\{1,2,3,4\}$ consist of all oriented loops, starting on the elliptic component and consisting of exactly two edges.
In other words,
\[
\gamma_1=(e_1,-e_1),\gamma_2=(e_2,-e_2), \gamma_3=(e_1,-e_2), \gamma_4=(e_2,-e_1).
\]
The contributions from $\gamma_1$ and $\gamma_2$ are zero by \Cref{lemma:noContr}.

Thus, only $\gamma_3$ and $\gamma_4$ can contribute.
It follows from the symmetry of the normalized bidifferential $\dfrac{1}{(z-w)^2}dwdz$ that both paths have the same contribution.
\textcolor{black}{Therefore, we have the following expansion  by \Cref{lemma:lowestOrder}
\begin{equation}\label{eq:NonVan}
\int_{\beta_1} \Theta_1(s)=\text{constant} + 2a_{1}a_{2}s^{n_{1}+n_{2}}\omega_{X_E(s)}(p)\omega_{X_E(s)}(q)\dfrac{1}{(1-0)^2} + O(s^{n_{1}+n_{2}+1}).
\end{equation} 
Since a holomorphic differential on $X_E(s)$ has no zeros, independent of $s$, we conclude that the entry $\int_{\beta_1} \Theta_1$ in the period matrix is not constant.
This contradicts  that $\Theta_1$ lies in the Forni subspace.} We remark that in our chosen coordinate system for plumbing, the moduli of the elliptic curve varies with $s$.  Therefore, in \Cref{eq:NonVan} the evaluation of the holomorphic differential $\omega_{X_E}(s)$ on the elliptic curve $X_E(s)$ appears.

\end{proof}

\textcolor{black}{
\begin{remark}
As with Cases 1, 2 and 4 above, 
 we believe that Proposition~\ref{NewForniCriterion} can be used to rule out Case 3 as well.  However, in \cite{AulicinoCompDegKZAIS}, Case 3 was split into three subcases.  One of them (Case 3B), is easy to exclude with Proposition~\ref{NewForniCriterion}.  However, it is not clear to the authors how the other two cases can be addressed without careful geometric arguments and dividing Cases 3A and 3C into more subcases.  For this reason, we feel it is cleaner and more efficient to use the uniform approach to Case 3 that was given above.
\end{remark}
}

\section{Case 6}
\label{Case6Sect}

Here we use the jump problem to prove that the two cylinders in Case 6, which have homologous core curves, have equal moduli.  It follows that the interiors of the cylinders are isometric, which allows us to use the arguments from \cite[$\S$5]{AulicinoNortonST5} to conclude the proof of \Cref{MainThm}.  We will prove below that in $\cH(1^4)$, there is a unique cylinder diagram satisfying Case 6 (see Figure~\ref{Wollmilchsau}).

\subsection{Equal Moduli}
\label{Case6EqModSect}

\begin{proposition}
\label{Case6EqMod}
Let $(X, \omega)$ be a translation surface with non-trivial Forni subspace.  Let $(X, \omega)$ decompose into cylinders satisfying Case 6 such that the ratio of the moduli of the two cylinders is rational.  Then the moduli of the two cylinders are equal.  Furthermore, the periods and heights of the cylinders are equal.
\end{proposition}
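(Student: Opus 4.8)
The plan is to show that the two cylinders carry the same integer $r_e$ of \Cref{eq:moduli}; once $r_{e_1}=r_{e_2}$, \Cref{lemma:recylinder} gives equal moduli, and since in Case~$6$ the two vanishing cycles are homologous (up to orientation — this is forced by the dual graph, as pinching them must disconnect $X'$), their periods, and hence circumferences, already agree, so equal moduli yields equal heights. I would work throughout in the plumbing coordinates of \cite[Lem.~2.1]{AulicinoNortonST5} recalled in \Cref{sec:Coord}, in which $s_e=s^{r_e}$ (so $n_e=r_e$ and $a_e\neq 0$) and the nodal model $Y_t=X'$ stays constant; this is allowed precisely because the rationality of the ratio of moduli is the commensurability hypothesis \Cref{eq:moduli}. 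Rotating so that the Case~$6$ direction is horizontal, let $(X',\omega')$ be the cylinder pinch: its dual graph has two genus-one vertices $X_1,X_2$ joined by the two edges $e_1,e_2$, so $g'=2$. Fix a symplectic basis $\cB$ adapted to this pinch with $\alpha_1,\beta_1$ supported on $X_1$ and $\alpha_2,\beta_2$ on $X_2$; the associated $A$-normalized differentials $\Theta_1,\Theta_2$ are then the normalized holomorphic Abelian differentials on the elliptic curves $X_1$ and $X_2$, and in particular each is nowhere vanishing.

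Assume for contradiction that $r_{e_1}\neq r_{e_2}$; after relabelling, $r_{e_1}<r_{e_2}$. The Forni input is \Cref{ForniCanonicalBasisProp}: since $g'=2$, the holomorphic Forni subspace of $(X_t,\omega_t)$ is contained in $\langle\Theta_1(t),\Theta_2(t)\rangle$, and it is non-zero for every $t$ by $\splin$-invariance of the Forni bundle. So for each $t$ pick $0\neq\eta(t)=c_1\Theta_1(t)+c_2\Theta_2(t)$ in it. By \Cref{eq:ForniAnn}, $\eta(t)$ annihilates the Forni $B$-form, hence $B_{x_t}(\eta(t),\Theta_1(t))=B_{x_t}(\eta(t),\Theta_2(t))=0$; writing $B_{ij}(t):=B_{x_t}(\Theta_i(t),\Theta_j(t))$ and using $(c_1,c_2)\neq 0$, this forces
\[
B_{11}(t)\,B_{22}(t)-B_{12}(t)^2=0\qquad\text{for all }t .
\]
By Rauch's variational formula — exactly as in the proof of \Cref{ForniCanonicalBasisProp} — one has $B_{ij}(t)=\tfrac{d}{dt}\Pi_{ij}(t)$ for the $A$-normalized period matrix, and $\tfrac{ds}{dt}=-2\pi\tfrac{m(C_e)}{r_e}\,s$. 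So it remains to extract the $s\to0$ asymptotics of $\Pi_{11},\Pi_{22},\Pi_{12}$ from \Cref{prop:Asymp} and \Cref{lemma:lowestOrder}.

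Since none of $\beta_1,\beta_2$ meets a node, the logarithmic terms in all three periods vanish. For $\Pi_{12}(s)=\int_{\beta_2}\Theta_1(s)$, the unique oriented path in the dual graph of weighted length $\le r_{e_1}$ from the support $X_1$ of $\Theta_1$ to the support $X_2$ of $\Theta_2$ is the single edge $e_1$ (the other edge has weight $r_{e_2}>r_{e_1}$, and every longer path has weighted length $\ge 3r_{e_1}$); by \Cref{lemma:lowestOrder} the coefficient of $s^{r_{e_1}}$ is $-a_{e_1}\hol(\Theta_1(q_{e_1}^+))\hol(\Theta_2(q_{e_1}^-))$, which is non-zero because a non-zero holomorphic differential on an elliptic curve has no zeros. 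Hence $B_{12}(s)$ is a non-zero analytic function vanishing to order exactly $r_{e_1}$ at $s=0$. For $\Pi_{11}(s)=\int_{\beta_1}\Theta_1(s)$ and $\Pi_{22}(s)=\int_{\beta_2}\Theta_2(s)$: since the dual graph has no self-loop, the shortest non-trivial closed path based at $X_1$ (resp.\ $X_2$) has weighted length $2r_{e_1}$, so by \Cref{eq:Paths} each period equals a constant plus a term that is $O(s^{2r_{e_1}})$, whence $B_{11}(s),B_{22}(s)=O(s^{2r_{e_1}})$. Therefore $B_{11}B_{22}-B_{12}^2$ vanishes to order $2r_{e_1}$ (the $B_{12}^2$ term dominates the $O(s^{4r_{e_1}})$ term), so it is not identically zero near $s=0$, contradicting the displayed identity. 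Thus $r_{e_1}=r_{e_2}$, and \Cref{lemma:recylinder} finishes the proof.

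The step I expect to be the main obstacle is not any single estimate but the combinatorial bookkeeping around \Cref{prop:Asymp} and \Cref{lemma:lowestOrder}: pinning down exactly which paths of which weighted length contribute, fixing the orientation conventions for $q_e^{\pm}$, and — crucially — checking that $B_{11}$ and $B_{22}$ genuinely vanish to order at least $2r_{e_1}$ (rather than merely tending to $0$), which is what makes the order comparison decisive. The one conceptual subtlety is that the holomorphic Forni line may move along the Teichm\"uller geodesic; this is why the argument is run through the pointwise determinant identity $B_{11}B_{22}=B_{12}^2$ for whatever generator $\eta(t)$ is chosen, rather than by declaring a fixed differential to be a Forni differential.
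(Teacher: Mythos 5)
Your proof is correct and follows the paper's strategy: both rely on the jump-problem asymptotics to show that $B_{12}(s)=d\Pi_{12}/dt$ vanishes to order exactly $r_{e_1}$ (via \Cref{lemma:lowestOrder} applied to the single-edge path $e_1$) while $B_{11},B_{22}=O(s^{2r_{e_1}})$, contradicting the rank degeneracy of the derivative of the period matrix forced by the non-trivial Forni subspace. The only difference is organizational: you invoke \Cref{ForniCanonicalBasisProp} to reduce to the $2\times 2$ minor $B_{11}B_{22}-B_{12}^2$, whereas the paper expands the full $3\times 3$ determinant of $d\Pi/ds$ including the $\ln s$ entry from $\Pi_{33}$; your variant neatly sidesteps the logarithmic term, but the underlying non-vanishing mechanism is identical.
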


Consider the cylinder pinch $(X', \omega')$ of a horizontally periodic translation surface $(X, \omega)$.  In this case the nodal curve $X'$ has two irreducible components. Both of these are elliptic curves, which we denote by $E_1$ and $E_2$. They are joined at two nodes $p_1\sim p_2$ and $q_1\sim q_2$. 
Choose a symplectic basis adapted to $X'$ such that the $B$-cycles
are of the following form.
The loop $\beta_1$ is a cycle supported on $E_1$, $\beta_2$ is a cycle supported on $E_2$, and $\beta_3$ is the cycle corresponding to the loop in the dual graph of $X'$.

\begin{proof}
By contradiction and without loss of generality, assume $r_1<r_2$, in which case, by \Cref{prop:Asymp}, the lowest order terms in the period matrix are as follows

$$\Pi(s)=\begin{pmatrix}
\hbox{const.}+O(s^{2r_1})&  O(s^{r_1}) & \hbox{const.}+O(s^{r_1}) \\
  O(s^{r_1})& \hbox{const.}+O(s^{2r_1}) & \hbox{const.}+O(s^{r_1})\\
\hbox{const.}+O(s^{r_1})& \hbox{const.}+O(s^{r_1})&\ln(s^{r_1})+\ln(s^{r_2}))
\end{pmatrix}$$
We recall the exponents $r_{e_i}$ from \Cref{eq:moduli}, which are related to the ratios of the cylinders on $(X,\omega)$. In the above formula, we write $r_i$ instead of $r_{e_i}$.
\textcolor{black}{Since $(X,\omega)$ has a non-trivial Forni subspace, the derivative of the period matrix has zero determinant along the geodesic flow. We will obtain a contradiction by computing the lowest order term of the determinant using the jump problem.}

We denote the entries in the derivative of the period matrix by \[
\dfrac{d \Pi(s)}{d s}=(\pi'_{ij}(s))_{ij}.
\] It follows from the existence of a Forni subspace that $\dfrac{d\Pi(s)}{d s}$ has zero determinant.  (See the remark at the end of \Cref{rem:LyapunovZero} or Proposition~\ref{ForniCanonicalBasisProp} Part 2.)

By expanding the determinant, \textcolor{black}{which we already know must be zero}, we see that
\[
0 = \det\left(\dfrac{d \Pi(s)}{d s}\right)=\dfrac{(r_1+r_2)(\pi'_{12})^2(s)}{s}+ O(s^{2r_1-2}).
\]
\textcolor{black}{Now the goal is to show that $(\pi_{12}'(s))^2/s$ has order exactly $2r_1 - 3$.  The next calculation will compute the coefficient of the term of order $2r_1 - 2$ in $(\pi_{12}'(s))^2$.  Therefore, after dividing by $s$, this term has order $2r_1 - 3$, and then we will prove that the coefficient of this term is non-zero to reach a contradiction.}

Since $r_1<r_2$, there exists a unique oriented path whose weighted length equals to the jump problem distance of $\Theta_1$ to $\Theta_2$. The path is given by the single oriented edge $e_1$. 
Thus, by \Cref{lemma:lowestOrder} and \Cref{prop:Asymp}, we conclude
$$
\pi'_{12}(s)=-r_1s^{r_1-1}\Theta_{1}(p_1)\Theta_2(p_2) +O(s^{r_1}),
$$
which is not identically zero because a holomorphic differential on an elliptic curve is nowhere zero. 
\textcolor{black}{Therefore, the derivative of the period matrix  does not have zero determinant.} This yields a contradiction and implies $r_1=r_2$.

Finally, from \Cref{lemma:recylinder}, the circumferences and heights of the cylinders are equal.
\end{proof}

\subsection{Reduction to the Wollmilchsau}

Before proceeding, we remark that the method of proof for the following proposition is identical to the one used in \cite{AulicinoNortonST5}.  In \cite{AulicinoNortonST5}, the problem was reduced to a large finite problem and a computer search was implemented.   \textcolor{black}{Here a computer assisted proof can be avoided.} 

\begin{proposition}
\label{Case6ImpEW}
Let $(X, \omega) \in \cH(1^4)$ be a completely periodic translation surface with non-trivial Forni subspace.  Let $(X, \omega)$ decompose into cylinders satisfying Case 6 such that the ratio of their moduli is rational. Then $(X,\omega)$ generates the Teichm\"uller curve of the Eierlegende Wollmilchsau.
\end{proposition}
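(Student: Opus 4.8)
The plan is to combine Proposition~\ref{Case6EqMod} with the flat-geometric arguments of \cite[$\S$5]{AulicinoNortonST5}, while replacing the finite computer search used there by the structural input of Proposition~\ref{ZeroImpCase6}, which says that \emph{every} cylinder decomposition of $(X,\omega)$ is again of Case 6.

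First I would normalize the flat picture. By Proposition~\ref{Case6EqMod} the two horizontal cylinders $C_1,C_2$ share a common circumference $w$ and a common height $h$, and their core curves $\gamma_1,\gamma_2$ are homologous: the dual graph of the cylinder pinch consists of two genus-one vertices joined by two edges, and the two vanishing cycles bound a neighborhood of one component, so $[\gamma_1]=\pm[\gamma_2]$. Hence $\gamma_1\cup\gamma_2$ separates $X$, and $(X,\omega)$ is recovered from two isometric $w\times h$ cylinders by prescribing a cylinder diagram together with the saddle-connection lengths and the two twist parameters. The first genuine step is the classification of cylinder diagrams. In $\cH(1^4)$ the one-skeleton consisting of the horizontal saddle connections has four vertices (the simple zeros), and its complement is a disjoint union of open annuli of Euler characteristic zero, so there are exactly eight horizontal saddle connections, independently of the number of cylinders; running through the finitely many ways of distributing their sixteen sides among the four boundary circles of $C_1$ and $C_2$, subject to the separating condition on $\{\gamma_1,\gamma_2\}$ and to all zeros having order one, should leave exactly one diagram---the one underlying Figure~\ref{Wollmilchsau}---which establishes the uniqueness asserted in the text.

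Second, with the cylinder diagram fixed, it remains to pin down the saddle-connection lengths and the twists modulo scaling and the Teichm\"uller flow. Here the leverage is complete periodicity: every other periodic direction decomposes $(X,\omega)$ into cylinders, which by Proposition~\ref{ZeroImpCase6} is again a Case 6 decomposition, and Proposition~\ref{Case6EqMod} then forces those two cylinders to be isometric with homologous core curves as well. Choosing a transverse periodic direction visible from the diagram (such directions exist since the surface is Veech, or in the general case after the reduction of Appendix~\ref{GeneralizationAppendix}) and comparing it with the horizontal one---using that the area is computed identically from either decomposition, and that the homology relation $[\gamma_1]=[\gamma_2]$ forces a transverse cylinder to cross $C_1$ and $C_2$ equally often---yields commensurability relations among $w$, $h$ and the transverse dimensions that leave only finitely many possibilities and in particular force $(X,\omega)$ to be square-tiled with the square-count of the Wollmilchsau. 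The remaining discrete choices, namely the twist parameters (equivalently, the monodromy permutations of the resulting origami), are then eliminated one by one: any choice other than the Wollmilchsau's produces a periodic direction whose decomposition is not of Case 6 with equal moduli---for instance a direction with three cylinders, or with two non-homologous cylinders---contradicting Proposition~\ref{ZeroImpCase6} or Proposition~\ref{Case6EqMod}. Equivalently, one can invoke Lemma~\ref{AulForniLem} directly: the core curves of all these transverse cylinders must be annihilated by the Forni subspace, whose holomorphic dimension is at most $g'=2$ by Proposition~\ref{ForniCanonicalBasisProp}, so too many independent such classes yield a contradiction.

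I expect the last step to be the main obstacle. The subtlety is that distinct twist parameters place $(X,\omega)$ in genuinely distinct $\splin$-orbits, so it does not suffice to observe that the moduli space of Case 6 surfaces with the given diagram is small---one must actually exhibit, for every competitor, a transverse direction whose cylinder decomposition violates Case 6 or the equal-moduli conclusion. Carrying out this finite check in a sufficiently structured way is precisely what allows the computer assistance of \cite[$\S$5]{AulicinoNortonST5} to be dispensed with here.
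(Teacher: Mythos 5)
Your overall strategy matches the paper's: normalize via Proposition~\ref{Case6EqMod}, establish uniqueness of the Case~6 cylinder diagram in $\cH(1^4)$, and then use complete periodicity together with Proposition~\ref{ZeroImpCase6} to force every periodic direction to be of Case~6. Your counting of eight horizontal saddle connections is also correct. However, you explicitly flag the last step---pinning down the saddle-connection lengths and the twist---as ``the main obstacle,'' and indeed the ideas you sketch there (commensurability relations, too many annihilated classes versus the dimension bound from Proposition~\ref{ForniCanonicalBasisProp}) are not what closes the gap, and as written they would not obviously terminate without reintroducing the finite computer search you are trying to avoid.

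The missing idea is a single, clean quantitative constraint. Since every cylinder decomposition is Case~6, every boundary of every cylinder in every periodic direction must carry exactly four saddle connections. Consequently, no non-horizontal straight-line trajectory can cross each of $C_1$ and $C_2$ exactly once before closing (such a trajectory would bound a cylinder whose boundary has at most two saddle connections). Applying this to the normalized picture (circumference~$1$, equal heights by Proposition~\ref{Case6EqMod}, $\tau_0$ and $\sigma_0$ the longest saddle connections on the bottoms of $C_1$ and $C_2$ of lengths $t_0\ge s_0$) and parametrizing the location of $\tau_0$ on the top of $C_2$ by $t_{start}$ as in \cite[Fig.~2, Cor.~5.10]{AulicinoNortonST5} yields the inequality
\[
1 - 2t_0 - 2s_0 \ \ge\ t_{start}\ \ge\ 0.
\]
Since each boundary has four saddle connections on a unit circumference, the largest satisfies $t_0\ge s_0\ge \tfrac14$, so the only possibility is $t_0=s_0=\tfrac14$ and $t_{start}=0$. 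Because $\tau_0,\sigma_0$ were chosen maximal, all eight saddle connections then have length $\tfrac14$, and with the unique cylinder diagram this identifies $(X,\omega)$ with the Eierlegende Wollmilchsau up to $\splin$. Note also that the paper's uniqueness argument for the cylinder diagram is simpler than your enumeration: cut along the two homologous core curves and reglue the half-cylinders crosswise, obtaining two $1$-cylinder surfaces in $\cH(1,1)$; uniqueness of the $1$-cylinder diagram in $\cH(1,1)$ then forces uniqueness in $\cH(1^4)$.
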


\begin{proof}
We claim that there is a unique cylinder diagram in $\cH(1^4)$ satisfying Case 6.  Since the core curves of the cylinders are homologous, the cylinders have equal circumference.  Denote the cylinders by $C_1$ and $C_2$ as in Figure~\ref{WindowLemmaCorFig}.  Observe that all of the saddle connections on the bottom of $C_1$ are identified to the saddle connections on the top of $C_2$ and vice versa.  Similarly, all of the saddle connections on the bottom of $C_2$ are identified to the saddle connections on the top of $C_1$ and vice versa.  Consider the operation of cutting the core curves of each cylinder and gluing the top half of $C_1$ to the bottom half of $C_2$ and the bottom half of $C_1$ to the top half of $C_2$.  This results in two $1$-cylinder surfaces, each of which are contained in $\cH(1,1)$.  We leave the reader to check that there is a unique $1$-cylinder diagram in $\cH(1,1)$.\footnote{It is worth noting for the appendix that there is also a unique $1$-cylinder diagram in $\cH(2)$.}  Since both of these $1$-cylinder diagrams are unique, if we reverse the cutting operation above and revert to the original translation surface in $\cH(1^4)$, we conclude that that cylinder diagram is also unique.

By Proposition~\ref{Case6EqMod}, the heights of both cylinders equal.  We cut and glue and, if necessary, deform a translation surface satisfying Case 6 as follows so that it appears as depicted in Figure~\ref{WindowLemmaCorFig}.  Let $\tau_0$ and $\sigma_0$, with lengths $t_0$ and $s_0$, respectively, be the longest saddle connections on the bottoms of $C_1$ and $C_2$, respectively.  Cut and glue $C_1$ if necessary so that $\tau_0$ appears on the bottom of $C_1$ as in Figure~\ref{WindowLemmaCorFig}.  Shear the surface so that $\sigma_0$ lies directly above $\tau_0$ as in Figure~\ref{WindowLemmaCorFig}.  Finally, cut and glue $C_2$ using the convention of Figure~\ref{SCFigConvention} so that $\sigma_0$ lies on the bottom of $C_2$ as in Figure~\ref{WindowLemmaCorFig}.

We normalize the circumference of the cylinders to $1$ for convenience.  Without loss of generality, let $t_0 \geq s_0$.  Since there are four saddle connections in the boundary of each cylinder, it follows that $t_0 \geq s_0 \geq \frac{1}{4}$.

Observe that every cylinder has exactly four saddle connections on each of its boundaries.  By complete periodicity, every closed trajectory determines a cylinder decomposition, and by Proposition~\ref{ZeroImpCase6}, that cylinder decomposition must satisfy Case 6.  Therefore, each boundary of every cylinder in any direction must contain four saddle connections.  On the other hand, if a non-horizontal straight-line trajectory crosses each of the cylinders in Figure~\ref{WindowLemmaCorFig} exactly once before closing, then each of its boundaries would consist of at most two saddle connections.  Therefore, the saddle connection $\tau_0$ cannot intersect the region of length $2s_0$ on the top of $C_2$ in Figure~\ref{WindowLemmaCorFig} because it would imply the existence of a closed trajectory crossing $C_1$ and $C_2$ exactly once.

We claim that the quantities $s_0$, $t_0$, and $t_{start}$ are subject to the constraints derived in \cite[$\S$5]{AulicinoNortonST5}, which are summarized in \cite[Cor.~5.10]{AulicinoNortonST5}.  To see this we refer to \cite[Fig.~2]{AulicinoNortonST5}, which has been reproduced as Figure~\ref{WindowLemmaCorFig}.  As noted above, there cannot exist a regular trajectory crossing each of the cylinders exactly once before closing.  We claim that such a trajectory must exist if any regular point in $\tau_0$ on the bottom of $C_1$ occurs in one of the two intervals\footnote{In fact, it is a single interval because the vertical sides of the rectangle are identified to form a cylinder.} bounded by black squares in Figure~\ref{WindowLemmaCorFig} of lengths $2s_0$ and $t_0$.  Indeed, the dashed lines show the boundaries of these regions, which are drawn using the fact that the heights of $C_1$ and $C_2$ are equal.  The reader can check that if $\tau_0$ intersected either of these intervals bounded by black squares, then there would exist a regular closed trajectory from $\tau_0$ on the bottom of $C_1$ passing through $\sigma_0$ and closing when it reached the top of $C_2$.  Hence, the interior of $\tau_0$ cannot intersect either of these intervals, and this implies the inequalities
$$1 - 2t_0 - 2s_0 \geq t_{start} \geq 0,$$
cf. \cite[Cor.~5.10]{AulicinoNortonST5}, where the circumference $2d_{opt}$ can be replaced with $1$ to obtain the inequality above.

Ignoring the middle term in the inequalities, the assumption that $t_0 \geq s_0 \geq \frac{1}{4}$ implies that the inequality is only satisfied exactly when
$$t_0 = s_0 = \frac{1}{4}.$$
Since these saddle connections were assumed to be the largest on their side of the cylinder, all of the saddle connection lengths are exactly equal to $\frac{1}{4}$ because there is a unique partition of $1$ into four real numbers such that the largest number is $\frac{1}{4}$.  Hence, every saddle connection has equal length.  Finally, $t_{start} = 0$ because
$$1 - 2t_0 - 2s_0 = 0 \geq t_{start} \geq 0.$$

Having determined that every saddle connection has equal length and the location of one of the saddle connections on each side of each cylinder, the fact that there is a unique cylinder diagram satisfying Case 6 in $\cH(1^4)$ implies that the translation surface is exactly the Eierlegende Wollmilchsau as depicted in Figure~\ref{Wollmilchsau}.
\end{proof}

\begin{figure}
  \centering
  \begin{tikzpicture}[scale=.9]
    \draw (0,0) -- (12,0) -- (12,1) -- (0,1) -- (0,0);
    \draw (0,3) -- (12,3) -- (12,4) -- (0,4) -- (0,3);
    \draw [dashed] (0,0) -- (2,1);
    \draw [dashed] (2,3) -- (4,4);
    \draw [dashed] (3,0) -- (0,1);
    \draw [dashed] (12,3) -- (9,4);
    \foreach \x in {(0,0), (3,0), (0,1), (2,1), (12,0), (12,1)} {\node[circle, fill=black] at \x {};}
    \foreach \x in {(0,3), (2,3), (12,3), (5,4), (8,4)} {\node[circle, fill=black] at \x {};}
    \foreach \x in {(0,4), (4,4), (9,4), (12,4)} {\node[rectangle, fill=black] at \x {};}
    \node[circle, label=above:$\tau_0$] at (13/2,4) {};
    \node[circle, label=below:$\tau_0$] at (3/2,0) {};
    \node[circle, label=above:$\sigma_0$] at (1,1) {};
    \node[circle, label=below:$\sigma_0$] at (1,3) {};
    \node[circle, label=left:$C_1$] at (0,1/2) {};
    \node[circle, label=left:$C_2$] at (0,3+1/2) {};
    \draw [decoration={brace,mirror,amplitude=7}, decorate] (0,-.5) --node[below=3mm]{$t_0$} (3,-.5);
    \draw [decoration={brace,mirror,amplitude=7}, decorate] (0,2.5) --node[below=3mm]{$s_0$} (2,2.5);
    \draw [decoration={brace,amplitude=7}, decorate] (0,4.5) --node[above=3mm]{$2s_0$} (4,4.5);
    \draw [decoration={brace,amplitude=7}, decorate] (4,4.5) --node[above=3mm]{$t_{start}$} (5,4.5);
    \draw [decoration={brace,amplitude=7}, decorate] (9,4.5) --node[above=3mm]{$t_0$} (12,4.5);
    \draw [decoration={brace,amplitude=7}, decorate] (5,4.5) --node[above=3mm]{$t_0$} (8,4.5);
     \end{tikzpicture}
\caption{Coordinates specifying the location of $\tau_0$ on the top of $C_2$.  Circles correspond to cone points and squares may or may not be cone points (Reproduced from \cite[Fig.~2]{AulicinoNortonST5})}
\label{WindowLemmaCorFig}
\end{figure}
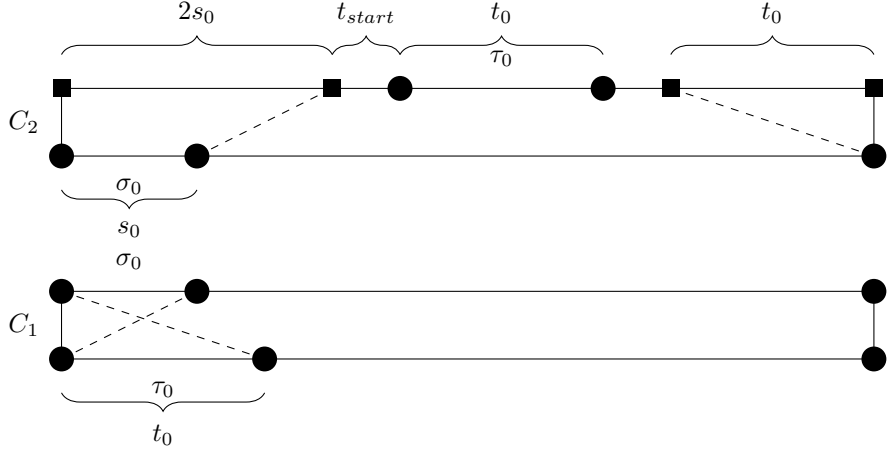

\appendix

\section{Orbit Closures and Other Strata}
\label{GeneralizationAppendix}

The results above were stated in their greatest possible generality in the context of $\splin$-orbit closures in genus three.  Nevertheless, occasionally it was necessary to assume that the translation surface possessed complete periodicity, was in the principal stratum, or had rational ratios of moduli of parallel cylinders.  We explain here how to extend Theorem~\ref{MainThm} to Theorem~\ref{Gen3Class}.  In this appendix, we assume familiarity with some properties of invariant subvarieties, e.g., field of definition and rank \cite{WrightFieldofDef, WrightCylDef}.

\subsection{Other Strata}

The arguments in Case 1, 2, 3 and 5 apply to all orbit closures in genus three with non-trivial Forni subspace. Only in Case 4 and 6 do we need to modify the arguments slightly.

\begin{proposition}
\label{Case4Genus3}
If a genus three translation surface satisfies Case 4, then it has trivial Forni subspace.
\end{proposition}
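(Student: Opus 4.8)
The plan is to reduce, by a sequence of harmless simplifications, to the situation already handled in \Cref{Case4AExc}. Suppose for contradiction that $(X,\omega)$ is a genus three surface satisfying Case~4 whose orbit closure $\cM$ has non-trivial Forni subspace, and let $(X',\omega')$ be the cylinder pinch of $(X,\omega)$. By \cite[Lem.~4.1]{AulicinoZeroExpGen3}---which classifies the dual graphs of cylinder pinches of \emph{any} horizontally periodic surface with non-trivial Forni subspace, not only those on Teichm\"uller curves---the dual graph of $X'$ is the Case~4 graph of \Cref{MainConfigTable}: $X'$ has geometric genus one, consisting of an elliptic component $E$ joined by one node to each of two rational components $v_1,v_2$. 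In particular $g'=1$, so by \Cref{ForniCanonicalBasisProp} either $F^{1,0}(X_t,\omega_t)$ is trivial---and then we are done---or it is one dimensional, spanned by an $A$-normalized differential $\Theta_1(t)$ which, by \Cref{prop:Asymp}, degenerates as $t\to\infty$ to a holomorphic form $\Theta_1$ on $X'$ that is supported on $E$ and is therefore nowhere vanishing there.

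The first simplification is to drop the hypothesis of commensurable moduli. If the horizontal cylinders of $(X,\omega)$ do not have pairwise commensurable moduli, then by \cite{WrightCylDef} we may deform $(X,\omega)$ inside $\cM$---without changing the horizontal cylinder diagram, hence without changing the dual graph of its cylinder pinch---to a surface all of whose horizontal cylinders do have commensurable moduli; since the Forni subspace depends only on $\cM$, nothing is lost, and the asymptotic results of \Cref{sec:Asymp} apply. The second simplification restricts the strata that must be considered. Because $\omega'$ has simple poles at the nodes, a degree count shows $\omega'|_E$ has zeros of total order $2$ while $\omega'$ restricted to each rational component has exactly one simple zero; consequently Case~4 can occur in genus three only in $\cH(1^4)$ and in $\cH(2,1,1)$. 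The former is exactly \Cref{Case4AExc}, so it remains to treat $\cH(2,1,1)$.

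For $\cH(2,1,1)$ the argument of \Cref{Case4AExc} goes through once one produces, somewhere in $\cM$, a cylinder whose core curve $\beta$ restricts---after the cylinder pinch---to an arc on $E$ joining the two distinct node preimages. Granting this, \Cref{AulForniLem} gives $\int_\beta \Theta_1(t)=0$ for all $t$, and letting $t\to\infty$, together with the fact that $\Theta_1$ vanishes identically on $v_1$ and $v_2$, yields $\int_{\beta\cap E}\Theta_1=0$. This is impossible, because $\beta\cap E$ is an arc between two distinct points of the elliptic curve $E$ and $\Theta_1$ is a nonzero holomorphic form there, so the integral of $\Theta_1$ along it is never zero. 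This contradiction shows $\cM$ has trivial Forni subspace.

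The main obstacle is the production of the cylinder $\beta$ for the finitely many Case~4 cylinder diagrams in $\cH(2,1,1)$; this is the role that \Cref{CylExists} plays in the principal stratum, and I expect the same method to suffice. The dual graph already forces the two cylinders that become the nodes adjacent to $E$ to have homologous core curves, hence equal circumference, and likewise for the two cylinders that become the remaining pair of nodes; after normalizing, one realizes the identification of the relevant cylinder boundaries as an interval exchange pre- and post-composed by translations and uses its preservation of Lebesgue measure to locate a subinterval whose straight-line return trajectory crosses the three cylinders forming the triangle $E\to v_1\to v_2\to E$ exactly once. What changes from the principal stratum is that the coalesced zero of order two alters which saddle connections bound the cylinders, so the inequalities among circumferences feeding the measure argument must be re-derived for each of the short list of Case~4 diagrams in $\cH(2,1,1)$; I do not expect any idea beyond \Cref{CylExists} to be needed.
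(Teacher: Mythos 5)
Your overall line of attack — reduce to the two strata $\cH(1^4)$ and $\cH(2,1,1)$ and apply \Cref{NewForniCriterion} after exhibiting a suitable transverse cylinder $\beta$ — is the right one, and your degree count correctly locates Case~4 in those two strata. However, the central step is left genuinely unfinished: you acknowledge that you need to produce a cylinder $\beta$ crossing $C_1,C_2,C_4$ once for each Case~4 cylinder diagram in $\cH(2,1,1)$, but you do not actually do it, saying only that you ``expect the same method to suffice'' and that ``the inequalities \dots must be re-derived for each of the short list of Case~4 diagrams.'' That is exactly the content that needs to be supplied, and until it is, the proposition is not proved for $\cH(2,1,1)$.

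The paper's own proof avoids re-deriving anything. It observes that the arguments for Cylinder Diagrams 4A and 4B in Section~\ref{Case4Sect} — both the interval-exchange/measure argument of \Cref{CylExists} and the cut-and-shear construction for 4B — never used the fact that saddle connection $3$ has positive length. Setting that length to zero collapses a pair of simple zeros into a double zero and turns the two $\cH(1^4)$ diagrams into precisely the corresponding $\cH(2,1,1)$ diagrams, so the existing proofs go through verbatim and no new inequalities need to be derived. You would do well to make this observation explicit rather than re-running the measure-theoretic argument: it closes the gap in a sentence and also shows that your first ``simplification'' (invoking Wright's deformation to restore commensurable moduli) is unnecessary, since the proofs in Section~\ref{Case4Sect} rest on \Cref{NewForniCriterion} and need only the limit statement $\lim_{t\to\infty}\int_\beta\Theta_1(t)=\int_\beta\Theta_1$, not the full asymptotic expansion.
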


\begin{proof}
In Case 4, the translation surface must lie in $\cH(2,1,1)$ if it does not lie in the principal stratum (see \cite[$\S$4.6]{AulicinoZeroExpGen3}).  The result follows from the observation that no proof in Section~\ref{Case4Sect} required saddle connection $3$ in Figures~\ref{Case4CylDiagsFig} and \ref{Case4AFigCylPf} to have positive length.  Indeed, letting saddle connection $3$ have length zero yields the two cylinder diagrams in $\cH(2,1,1)$ and the reader can verify that the proofs still hold.
\end{proof}

\begin{proposition}
\label{Case6Genus3Teich}
Let $(X, \omega)$ be a completely periodic genus three translation surface with non-trivial Forni subspace.
If $(X, \omega)$ satisfies Case 6 and the ratio of the moduli of the cylinders is rational, then $(X,\omega)$ generates the Teichm\"uller curve of the Eierlegende Wollmilchsau.
\end{proposition}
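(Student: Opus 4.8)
The plan is to adapt the proof of Proposition~\ref{Case6ImpEW} to an arbitrary genus three stratum by carrying the number of saddle connections along each cylinder boundary as an unknown, and to show that the resulting inequalities force the ambient stratum to be $\cH(1^4)$, at which point Proposition~\ref{Case6ImpEW} finishes the argument. First I would identify the possible strata. Since the dual graph of the cylinder pinch of a Case 6 surface has two genus one vertices joined by two edges, the stable differential has two simple poles on each elliptic component, forcing the two zeros on each component (counted with multiplicity) to have total order $2$; hence $(X,\omega)$ lies in one of $\cH(1^4)$, $\cH(2,1,1)$, $\cH(2,2)$, equivalently the number $n$ of zeros of $\omega$ satisfies $n\in\{2,3,4\}$. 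Computing the Euler characteristic of the complement of the horizontal saddle connection graph, the number of horizontal saddle connections equals $s=2g-2+n=4+n$. Writing $C_1,C_2$ for the two cylinders of Case 6, let $a$ be the number of saddle connections shared by the top of $C_1$ and the bottom of $C_2$, and $b$ the number shared by the bottom of $C_1$ and the top of $C_2$; then $a+b=s=4+n\le 8$.

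Next I would invoke Proposition~\ref{Case6EqMod}, whose hypotheses all hold here, to conclude that $C_1$ and $C_2$ have equal circumference and equal height, and then rerun the window argument of \cite[$\S$5]{AulicinoNortonST5} reproduced in the proof of Proposition~\ref{Case6ImpEW} and in Figure~\ref{WindowLemmaCorFig}. Normalizing the common circumference to $1$, the longest saddle connection $\tau_0$ on the bottom of $C_1$ has length $t_0\ge 1/b$, the longest saddle connection $\sigma_0$ on the bottom of $C_2$ has length $s_0\ge 1/a$, and the window constraint $1-2t_0-2s_0\ge t_{start}\ge 0$ of \cite[Cor.~5.10]{AulicinoNortonST5} still holds. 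Indeed, a violation of this constraint would produce a non-horizontal closed trajectory crossing $C_1$ and $C_2$ exactly once, whose associated cylinder has at most two saddle connections on each of its two boundaries, since each of those boundary curves meets the union of the horizontal saddle connections exactly twice; but by the version of Proposition~\ref{ZeroImpCase6} proved earlier in this appendix the transverse direction is again of Case 6 type, so its two cylinders carry $4+n\ge 6$ saddle connections in total, a contradiction. Hence $\tfrac1a+\tfrac1b\le\tfrac12$, and combined with $a+b=4+n\le 8$ and $a,b\ge 1$ this forces $n=4$ and $a=b=4$, with equality everywhere; in particular every saddle connection has length $\tfrac14$. Therefore $(X,\omega)\in\cH(1^4)$ and Proposition~\ref{Case6ImpEW} gives that $(X,\omega)$ generates the Teichm\"uller curve of the Eierlegende Wollmilchsau (equivalently, the equality case together with the uniqueness of the Case 6 cylinder diagram in $\cH(1^4)$ pins the surface down directly).

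The step I expect to be the main obstacle is the window constraint in the absence of the a priori count of four saddle connections per boundary: one must verify carefully that a cylinder crossing $C_1$ and $C_2$ exactly once has at most two saddle connections on each boundary, and then combine this with $a'+b'=4+n$ for the transverse Case 6 decomposition to reach the contradiction. The remaining ingredients are either cited directly (Propositions~\ref{Case6EqMod} and \ref{Case6ImpEW}) or amount to the Euler characteristic and cone angle bookkeeping above.
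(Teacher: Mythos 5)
Your proof is correct and follows essentially the same route as the paper's: identify the possible strata from the cylinder-pinch dual graph, apply \Cref{Case6EqMod} to equalize heights, and use the window constraint of \cite[Cor.~5.10]{AulicinoNortonST5} together with the generalized \Cref{ZeroImpCase6} / \Cref{Case4Genus3} to rule out any transverse cylinder crossing $C_1,C_2$ once. The only cosmetic difference is that you package the final numerical step as an AM--HM inequality ($1/a+1/b \ge 4/(a+b) \ge 1/2$, with equality forcing $a=b=4$ and $n=4$), whereas the paper argues directly that a double zero gives a boundary with three saddle connections, hence $t_0$ or $s_0 \ge 1/3$, yielding $1-2t_0-2s_0 \le -1/6 < 0$; both give the same conclusion.
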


\begin{proof}
Since the principal stratum was already addressed in Proposition~\ref{Case6ImpEW}, it suffices to focus on the remaining strata.  We claim that in Case 6, the surface must lie in $\cH(2,1,1)$ or $\cH(2,2)$.  This can be seen because the total order of the zeros between the two cylinders must be exactly two.  This follows because after a cylinder pinch, we see two elliptic curves, each with two simple poles, and the total order of the zeros and poles on an elliptic curve is zero.

By contradiction, assume that there is such a translation surface satisfying Case 6 and the assumptions of this proposition outside of the principal stratum.  Then by Propositions~\ref{ZeroImpCase6} and \ref{Case4Genus3}, every periodic direction on the surface must satisfy Case 6.  If there is a double zero between two of the cylinders, then there are three saddle connections between them.  We claim that there always exists a cylinder transverse to the horizontal direction that crosses each cylinder exactly once, which would contradict the fact that every periodic direction has to satisfy Case 6 as in the proof of Proposition~\ref{Case6ImpEW}.  In the proof of Proposition~\ref{Case6ImpEW}, we had $t_0 \geq \frac{1}{4}$ and $s_0 \geq \frac{1}{4}$.  With three saddle connections in at least one of the boundaries, we can conclude that at least one of $t_0$ and $s_0$ is greater than or equal to $\frac{1}{3}$, in which case we get
$$0 \leq t_{start} \leq 1 - 2s_0 - 2t_0 \leq 1 - 2\cdot\frac{1}{3} - 2\cdot\frac{1}{4} = \frac{-1}{6}.$$
This contradiction proves the non-existence of Teichm\"uller curves with non-trivial Forni subspace outside of the principal stratum.
\end{proof}

\subsection{Invariant Subvarieties}
\label{app:Comp}

\begin{proof}[Proof of Thm.~\ref{Gen3Class}]
We focus on the rank of the $\splin$-orbit closure in the sense of \cite{WrightCylDef}.  First, rank three orbit closures have a trivial Forni subspace by \cite{AvilaEskinMollerForniBundle}.

If an $\splin$-orbit closure has rank one, then it is completely periodic by \cite[Thm.~1.5]{WrightCylDef}.  By Propositions~\ref{ZeroImpCase6} and \ref{Case4Genus3}, a non-trivial Forni subspace and complete periodicity imply that every periodic direction must satisfy Case 6.  If the ratio of the moduli of the two cylinders in Case 6 is rational, then Propositions~\ref{Case6ImpEW} and \ref{Case6Genus3Teich} apply and we conclude.  If the ratio is irrational, then the translation surface cannot generate a Teichm\"uller curve.  By \cite[Thm.~1.9]{WrightCylDef}, such an orbit closure must be arithmetic because the ratio of circumferences of parallel cylinders in Case 6 is one.  By contradiction, assume that such an orbit closure exists.  Then it contains infinitely many (arithmetic) Teichm\"uller curves with a positive dimensional Forni subspace by \cite[Lem.~2.2]{AulicinoZeroExpGen3}.  However, Propositions~\ref{Case6ImpEW} and \ref{Case6Genus3Teich} prove that only one such Teichm\"uller curve exists in genus three, which is a contradiction that proves that the only rank one orbit closure with a non-trivial Forni subspace in genus three is the Teichm\"uller curve of the Eierlegende Wollmilchsau.

Similarly, rank two orbit closures in genus three are arithmetic and therefore, they too must contain infinitely many Teichm\"uller curves.  If a rank two orbit closure had a non-trivial Forni subspace, then all of the Teichm\"uller curves contained in it would as well by \cite[Lem.~2.2]{AulicinoZeroExpGen3}, which again is impossible.
\end{proof}

\bibliography{fullbibliotex}{}

\end{document}